\documentclass[final,onefignum,onetabnum]{siamart220329}







\newcommand{\sR}{\mathbb{R}}
\newcommand{\sH}{\mathcal{H}}

\newcommand{\Fix}{\mathsf{Fix}}
\newcommand{\sol}{\Fix T}
\newcommand{\zer}{\textrm{Zer}}
\newcommand{\spa}{\textrm{span}}

\newcommand{\mysum}{\displaystyle \sum\limits}

\newcommand{\proj}{\mathrm{Proj}}
\newcommand{\Id}{\mathrm{Id}}

\newcommand{\BP}{Banach-Picard }
\newcommand{\KM}{Krasnosel'ski\u{\i}-Mann }
\newcommand{\DR}{Douglas-Rachford }
\newcommand{\FB}{forward-backward }

\newcommand{\E}{\mathcal{E}}


\newcommand{\vj}{\mathbf{1}}
\newcommand{\vo}{\mathbf{0}}

\newcommand{\mI}{\mathbb{I}}
\newcommand{\mO}{\mathbb{O}}

\usepackage{amsfonts}
\usepackage{booktabs}
\usepackage{graphicx}
\usepackage{epstopdf}
\usepackage{algorithmic}
\ifpdf
\DeclareGraphicsExtensions{.eps,.pdf,.png,.jpg}
\else
\DeclareGraphicsExtensions{.eps}
\fi

\usepackage{amsopn}

\usepackage{amsmath,amssymb}
\usepackage{caption}

\usepackage{cleveref}


\newsiamremark{remark}{Remark}
\newsiamremark{hypothesis}{Hypothesis}
\crefname{hypothesis}{Hypothesis}{Hypotheses}

\usepackage{enumitem}
\setlist[enumerate]{label=$\rm{(\roman*)}$,leftmargin=\parindent}

\headers{Fast Krasnosel'ski\u{\i}-Mann algorithm}{R. I. Bo\c{t} and D.-K. Nguyen}

\title{Fast Krasnosel'ski\u{\i}-Mann algorithm\\ with a convergence rate of the fixed point iteration of $\lowercase{o} \left(\frac{1}{\lowercase{k}} \right)$\thanks{Received by the editors June 21, 2022; 
		accepted for publication (in revised form) August 22, 2023; 
		\funding{The work of the first author was supported by the Austrian Science Fund (FWF), projects W 1260 and P 34922-N. The work of the second author was supported
			by the Austrian Science Fund (FWF), projects P 34922-N.}}}

\author{Radu Ioan Bo\c{t}\thanks{Faculty of Mathematics, University of Vienna, Oskar-Morgenstern-Platz 1, 1090 Vienna, Austria 
		(\email{radu.bot@univie.ac.at}).}
	\and Dang-Khoa Nguyen\thanks{Faculty of Mathematics, University of Vienna, Oskar-Morgenstern-Platz 1, 1090 Vienna, Austria 
		(\email{dang-khoa.nguyen@univie.ac.at}).}}

\begin{document}

\maketitle

\begin{abstract}
The \KM (KM) algorithm is the most fundamental iterative scheme designed to find a fixed point of an averaged operator in the framework of a real Hilbert space,  since it lies at the heart of various numerical algorithms for solving monotone inclusions and convex optimization problems. We enhance the \KM algorithm with Nesterov's momentum updates and show that the resulting numerical method  exhibits a convergence rate for the fixed point residual of $o \left(\frac{1}{k} \right)$ while preserving the weak convergence of the iterates to a fixed point of the operator.  Numerical experiments illustrate the superiority of the resulting so-called Fast KM algorithm over various fixed point iterative schemes, and also its oscillatory behavior, which is a specific of Nesterov's momentum optimization algorithms.
\end{abstract}

\begin{keywords}
nonexpansive operator,
averaged operator,
\KM iteration,
Nesterov's momentum,
Lyapunov analysis,
convergence rates,
convergence of iterates
\end{keywords}

\begin{MSCcodes}
 	47J20, 47H05, 65K15, 65Y20
\end{MSCcodes}

\section{Introduction}

Let $\sH$ be a real Hilbert space with inner product $\left\langle \cdot , \cdot \right\rangle$ and induced norm $\left\lVert \cdot \right\rVert$.
In this paper we are interested in formulating a fast numerical method for solving the fixed point problem
\begin{equation}
	\label{intro:pb:fix}
	\textrm{ Find } x \in \sH \textrm{ such that } x = T \left( x \right) ,
\end{equation}
where $T \colon \sH \to \sH$ is a $\theta$-averaged operator with $\theta \in \left( 0 , 1 \right]$. Recall that an operator $R \colon \sH \to \sH$ is \emph{nonexpansive} if it is $1$-Lipschitz continuous, that is
\begin{equation*}
	\left\lVert R \left( x \right) - R \left( y \right) \right\rVert \leq \left\lVert x - y \right\rVert \quad \forall x , y \in \sH.
\end{equation*}
Then $T$ is said to be \emph{averaged with constant $\theta$} or  \emph{$\theta$-averaged} if there exists a nonexpansive operator $R \colon \sH \to \sH$ such that 
$$T = \left( 1 - \theta \right) \Id + \theta R,$$
where $\Id \colon \sH \to \sH$ denotes the identity mapping on $\sH$. Obviously, an operator $T$ is nonexpansive if it is at least $1$-averaged.
We denote the set of all \emph{fixed points} of $T$ by $\sol := \left\lbrace x \in \sH  \ | \ x = T \left( x \right) \right\rbrace$.

The most naive approach when looking for a fixed point of $T$ is the following process, also called \emph{\BP iteration},
\begin{equation}
	\label{algo:BP}
	x_{k+1} := T \left( x_{k} \right) \quad \forall k \geq 0 ,
\end{equation}
where $x_0 \in \sH$ is a starting point.

According to the \BP fixed point theorem, if $T$ is a \emph{contraction}, namely, $T$ is Lipschitz continuous with modulus $\delta \in \left[ 0 , 1 \right)$, then the sequence $(x_k)_{k \geq 0}$ generated by \eqref{algo:BP} converges strongly to the unique fixed point of $T$ with linear convergence rate.  If $T$ is just nonexpansive, then this statement is no longer true. To illustrate this, it is enough to choose $T = - \Id$ and $x_{0} \neq 0$,  in which case the \BP iteration not only fails approach a fixed point of $T$, but also generates a sequence that does not satisfy the \emph{asymptotic regularity property}.  We say that 
the sequence $(x_k)_{k \geq 0}$ satisfies the asymptotic regularity property if the difference $x_{k} - T \left( x_{k} \right)$ converges strongly to $0$ as $k$ tends to $+ \infty$. This property is crucial for guaranteeing the convergence of the iterates, as we will see later.

In order to overcome the restrictive contraction assumption on $T$,  Krasnosel’ski\u{\i} proposed in \cite{Krasnoselskii} to apply the \BP iteration \eqref{algo:BP} to the operator $\frac{1}{2} \Id + \frac{1}{2} T$ instead of $T$. Following on the idea of using convex combinations the so-called \emph{\KM (KM) iteration}
\begin{equation}
	\label{algo:KM}
	x_{k+1} := \left( 1 - s_{k} \right) x_{k} + s_{k} T \left( x_{k} \right) \quad \forall k \geq 0,
\end{equation}
where $\left(s_{k} \right)_{k \geq 0}$ is a sequence in $\left( 0 , 1 \right]$,  emerged. It turned out that a fundamental step in proving the convergence of the iterates of \eqref{algo:KM} is to show that $x_{k} - T \left( x_{k} \right) \to 0$ as $k \to + \infty$,  as it was done by Browder and  Petryshyn in \cite{Browder-Petryshyn} in the constant case $s_{k} \equiv s \in \left( 0 , 1 \right)$.  The extension to nonconstant sequences was achieved by Groetsch in \cite{Groetsch} who proved that,  if $\sum_{k \geq 0} s_{k} \left( 1 - s_{k} \right) = + \infty$, then the asymptotic regularity property is satisfied. The weak convergence of the iterates was then studied in various settings in \cite{Groetsch,Ishikawa,Reich,Borwein-Reich-Shafrir,Bauschke-Combettes:book}.  Tikhonov regularization based techniques to improve the convergence of the iterates from weak to strong have been recently studied in \cite{Bot-Csetnek-Meier,Bot-Meier}.

By considering convex combinations with a fixed so-called \emph{anchor point} $x_{0} \in \sH$ one obtains the \emph{Halpern iteration} \cite{Halpern}
\begin{equation}
	\label{algo:Halpern}
	x_{k+1} := \left( 1 - s_{k} \right) x_{0} + s_{k} T \left( x_{k} \right) \quad \forall k \geq 0,
\end{equation}
a method that has recently attracted a lot of interest \cite{Lieder,Yoon-Ryu:21,Qi-Xu,Park-Ryu}. The asymptotic regularity property of this iterative scheme has been studied in \cite{Wittmann,Xu}.

Despite having ubiquitous applications in various fields,  the study of the computational complexity of fixed point iteration schemes is still limited. One natural measure to quantify this is by means of the rate of convergence of the \emph{fixed point residual} $\left\lVert x_{k} - T \left( x_{k} \right) \right\rVert$. Notice that the asymptotic regularity property does not automatically provide an explicit convergence rate. 

Sabach and  Shtern proved in \cite{Sabach-Shtern} for a  general form of the Halpern iteration that the rate of convergence of the fixed point residual is of $O \left( \frac{1}{k} \right)$. Lieder tightened this results in \cite{Lieder} by a constant factor, for the Halpern iteration with $s_{k} := 1 - \frac{1}{k+2}$ for every $k \geq 0$, whereas Park and Ryu proved in \cite{Park-Ryu} that the convergence rate of $O \left( \frac{1}{k} \right)$ is optimal for this iterative scheme, which means that it can not be improved in general. On the other hand, the convergence of the \KM iteration expressed in terms of the fixed point residual was in the nineties proved to be of $o \left(\frac{1}{\sqrt{k}} \right)$  in \cite{Baillon-Bruck} in the case of a constant sequence $\left(s_{k} \right)_{k \geq 0}$.  Later on, in the case of a nonconstant sequence, it was proved to be of $O \left(\frac{1}{\sqrt{k}} \right)$ in \cite{Cominetti-Soto-Vaisman,Liang-Fadili-Peyr}, and of $o \left(\frac{1}{\sqrt{k}}\right)$ in \cite{Bravo-Cominetti,Davis-Yin:2016,Matsushita}, whereas in \cite{Bot-Csetnek} it was shown that the asymptotic rate of convergence for the fixed point residual of the continuous time counterpart of the \KM iteration is of $o \left(\frac{1}{\sqrt{t}} \right)$. Recently,  Fierro, Maul\'en and Peypouquet proved in \cite{Fierro-Maulen-Peypouquet} that the rate of convergence of the fixed point residual of a general \emph{inertial \KM algorithm} is also of $o \left(\frac{1}{\sqrt{k}}\right)$. Noticeably,  Contreras and Cominetti showed in  \cite{Contreras-Cominetti} that in the \emph{Banach space setting} the lower bound of the  \KM iteration is $O \left(\frac{1}{\sqrt{k}} \right)$, which does not say anything about the lower bound in the Hilbert space setting.

For a family of general approaches aimed to ``accelerate'' the convergence of sequences relying on Shanks transformation and including Anderson acceleration, which can be applied also to fixed point problems, we refer to \cite{Brezinski-Redivo-Saad}.

In this paper we introduce an iterative method for solving the fixed point problem \eqref{intro:pb:fix} which exhibits a rate of convergence for the fixed point residual of $o \left( \frac{1}{k} \right)$ and guarantees the weak convergence of the iterates to a fixed point of $T$. The method is obtained  by enhancing the \KM iteration with Nesterov's momentum updates and follows via the temporal discretization of the second order dynamical system with vanishing damping term proposed in \cite{Bot-Csetnek-Nguyen} for solving monotone equations. The iterative scheme exploits the coercivity of the operator $\Id-T$ and has consequently a much more simple formulation than the Fast OGDA algorithm introduced in \cite{Bot-Csetnek-Nguyen} for solving monotone equations, which requires the construction of auxiliary sequences. Numerical experiments show that the resulting so-called Fast KM algorithm outperforms various fixed point iterative schemes including recently introduced ones using anchoring. The numerical experiments also illustrate the oscillatory behavior of the method,  which is a specific of algorithms with Nesterov's momentum updates.

As a by-product of our proposed approach we obtain several fast splitting methods for solving monotone inclusions. It is well-known that some of the most prominent splitting schemes result as particular instances of the \KM iteration, since they can be reduced to the solving of a fixed point problem governed by an average operator. This is the case for the \DR splitting \cite{Douglas-Rachford,Lions-Mercier}, the \FB splitting \cite{Lions-Mercier}, and the three operator splitting \cite{Davis-Yin,Davis}. For a comprehensive study of operator splitting schemes we refer to \cite{Bauschke-Combettes:book}. Recent contributions to the acceleration of the convergence of splitting methods have been made in  \cite{Kim,Yoon-Ryu:21,Lee-Kim,Tran-Dinh-Luo,Park-Ryu}.

\section{A fast \KM iteration}

In our approach, we rely on the simple observation that
\begin{equation*}
	x_{*} \in \sol \Leftrightarrow \left( \Id - T \right) \left( x_{*} \right) = 0,
\end{equation*}
which allows us to benefit from the recent development on a continuous fast method for solving monotone equations in \cite{Bot-Csetnek-Nguyen}. To be more specific, we have that $T$ is $\theta$-averaged if and only if $\Id - T$ is $\frac{1}{2 \theta}$-cocoercive \cite[Proposition 4.39]{Bauschke-Combettes:book}, that is
\begin{multline}
	\label{pre:coco}
	\left\langle x - y , \left( \Id - T \right) \left( x \right) - \left( \Id - T \right) \left( y \right) \right\rangle \\
	\geq \dfrac{1}{2 \theta} \left\lVert \left( \Id - T \right) \left( x \right) - \left( \Id - T \right) \left( y \right) \right\rVert ^{2} \geq 0 \quad \forall x , y \in \sH .
\end{multline}
From here one can immediately see that it follows immediately that $\Id - T$ is monotone. Furthermore, from the Cauchy-Schwarz inequality we can see that $\Id - T$ is at most $2 \theta$-Lipschitz continuous.

As a direct consequence of \eqref{pre:coco} we have that for every $x_{*} \in \Fix T$ it holds
\begin{equation*}
	\left\langle x - x_{*} , x - T \left( x \right) \right\rangle \geq \dfrac{1}{2 \theta} \left\lVert x - T \left( x \right) \right\rVert ^{2} \geq 0 \quad \forall x \in \sH .
\end{equation*}

The dynamical system studied in \cite{Bot-Csetnek-Nguyen}, formulated for the monotone equation $\left( \Id - T \right) \left( x \right) = 0$ and for constant time scaling term $\beta \left( t \right) \equiv 1$, reads
\begin{equation}	
	\label{pre:ds}
	\begin{cases}
		\ddot{x} \left( t \right) + \dfrac{\alpha}{t} \dot{x} \left( t \right) + \dfrac{d}{dt} \left( \Id - T \right) \left( x \left( t \right) \right) + \dfrac{\alpha}{2t} \left( \Id - T \right) \left( x \left( t \right) \right) = 0 \\
		x \left( t_{0} \right) = x_{0} \textrm{ and } \dot{x} \left( t_{0} \right) = \dot x_{0},
	\end{cases}
\end{equation}
where $\alpha \geq 2$, $\left( x_{0} , \dot x_{0} \right) \in \sH \times \sH$. The Lipschitz continuity of $\Id - T$ guarantees the existence and uniqueness of a strong global solution $x:[t_0,+\infty] \rightarrow \sH$, which means that $x$ and $\dot x$ are locally absolutely continuous,  $x \left( t_{0} \right) = x_{0}$, $\dot{x} \left( t_{0} \right) = \dot x_{0}$ and $x$ fulfills the first equation  in \eqref{pre:ds} almost everywhere.

We set the time scaling term equal to one since our aim is to derive via temporal discretization an explicit iterative fixed point scheme, whereas nonconstant time scaling terms are known to positively impact the convergence rates of implicit numerical algorithms; see \cite[Remark 2]{Bot-Csetnek-Nguyen} for a detailed discussion on this issue.

Consider the first-order formulation of the first equation in \eqref{pre:ds}
\begin{equation}
	\label{dis:ds-fo}
	\begin{cases}
		\dot{u} \left( t \right)	& = \left( 2 - \alpha \right) \left( \Id - T \right) \left( x \left( t \right) \right) \\
		u \left( t \right) 			& = 2 \left( \alpha - 1 \right) x \left( t \right) + 2t \dot{x} \left( t \right) + 2t\left( \Id - T \right) \left( x \left( t \right) \right).
	\end{cases}
\end{equation}
We fix a time step $s > 0$, set $s_{k} := s \left( k + 1 \right)$ for every $k \geq 1$, and approximate $x \left( s_{k} \right) \approx x_{k+1}$, and $u \left( s_{k} \right) \approx u_{k+1}$. The explicit finite-difference scheme for \eqref{dis:ds-fo} at time $t := s_{k}$ gives for every $k \geq 1$
\begin{equation}
	\label{dis:fd-fo}
	\begin{cases}
		\dfrac{u_{k+1} - u_{k}}{s} 	& = \left( 2 - \alpha \right) \left( \Id - T \right) \left( x_{k} \right) \\
		u_{k+1} 					& = 2 \left( \alpha - 1 \right) x_{k+1} + 2 \left( k + 1 \right) \left( x_{k+1} - x_{k} \right) + 2 s  \left( k + 1 \right) \left( \Id - T \right) \left( x_{k} \right),
	\end{cases}
\end{equation}
with the initialization  $u_{0} := x_{0} - s \dot x_{0}$ and $u_{1} := x_{0}$. Different to \cite{Bot-Csetnek-Nguyen}, where for the discretization of the argument of $\Id-T$ we used an auxiliary sequence, this time we can use $\left(x_{k} \right) _{k \geq 0}$. This is thanks to the stronger property of cocoercivity the operator $\Id-T$ is enhanced with and which will be reflected in the convergence analysis. We will see that this allows us not only to design a simpler algorithm, but also to consider larger step sizes than for the one proposed in \cite{Bot-Csetnek-Nguyen}.

Next we will simplify the sequence $\left(u_{k} \right)_{k \geq 0}$. The second equation in \eqref{dis:fd-fo} gives for every $k \geq 1$
\begin{equation*}
	u_{k} = 2 \left( \alpha - 1 \right) x_{k} + 2k \left( x_{k} - x_{k-1} \right) + 2sk \left( \Id - T \right) \left( x_{k-1} \right).
\end{equation*}
Taking the difference we obtain for every $k \geq 1$
\begin{align}
	u_{k+1} - u_{k}
	= & \ 2 \left( k + \alpha \right) \left( x_{k+1} - x_{k} \right) - 2k \left( x_{k} - x_{k-1} \right) + 2s \left( \Id - T \right) \left( x_{k} \right) \nonumber \\
	& + 2sk \left( \left( \Id - T \right) \left( x_{k} \right) - \left( \Id - T \right) \left( x_{k-1} \right) \right) \nonumber \\
	= &  \ \left( 2 - \alpha \right) s \left( \Id - T \right) \left( x_{k} \right) , \label{dis:d-u:pre}
\end{align}
where the last relation comes from the first equation in \eqref{dis:fd-fo}. After rearranging \eqref{dis:d-u:pre}, we obtain for every $k \geq 1$
\begin{align}	
	2 \left( k + \alpha \right) \left( x_{k+1} - x_{k} \right)
	& = 2k \left( x_{k} - x_{k-1} \right) - \alpha s \left( \Id - T \right) \left( x_{k} \right) \label{dis:d-u} \\
	& \quad - 2sk \left( \left( \Id - T \right) \left( x_{k} \right) - \left( \Id - T \right) \left( x_{k-1} \right) \right) . \nonumber
\end{align}
From here, we deduce that for every $k \geq 1$
\begin{align*}
	x_{k+1} 
	& = x_{k} + \dfrac{k}{k + \alpha} \left( x_{k} - x_{k-1} \right) - \dfrac{s \alpha}{2 \left( k + \alpha \right)} \left( \Id - T \right) \left( x_{k} \right) \nonumber \\
	& \quad - \dfrac{sk}{k + \alpha} \left( \left( \Id - T \right) \left( x_{k} \right) - \left( \Id - T \right) \left( x_{k-1} \right) \right) \nonumber \\
	& = \left( 1 - \dfrac{s \alpha}{2 \left( k + \alpha \right)} \right) x_{k} + \dfrac{\left( 1 - s \right) k}{k + \alpha} \left( x_{k} - x_{k-1} \right) \nonumber \\
	& \quad + \dfrac{s \alpha}{2 \left( k + \alpha \right)} T \left( x_{k} \right) + \dfrac{sk}{k + \alpha} \left( T \left( x_{k} \right) - T \left( x_{k-1} \right) \right) .
\end{align*}

Summing up, the algorithm we propose in this paper for solving \eqref{intro:pb:fix} has the following formulation.
\begin{algorithm}
	\caption{Fast KM algorithm}
	\label{algo:fKM}
	\begin{algorithmic}
		\STATE{Let $\alpha > 2, x_{0}, x_{1} \in \sH$ and $0 < s \leq \frac{1}{\theta}$.}
		\FOR{$k = 1, 2, \cdots$}
		\STATE{Compute}
		\begin{align}			
			x_{k+1}	& := \left( 1 - \dfrac{s \alpha}{2 \left( k + \alpha \right)} \right) x_{k} + \dfrac{\left( 1 - s \right) k}{k + \alpha} \left( x_{k} - x_{k-1} \right) \nonumber \\
			& \qquad + \dfrac{s \alpha}{2 \left( k + \alpha \right)} T \left( x_{k} \right) + \dfrac{sk}{k + \alpha} \left( T \left( x_{k} \right) - T \left( x_{k-1} \right) \right) . \label{algo:scheme}
		\end{align}
		\ENDFOR
	\end{algorithmic}
\end{algorithm}

\begin{remark}\label{remark1}
	For the step size choice $s:=1$ which is allowed for every $\theta$-averaged operator $T$ with  $\theta \in (0,1]$, our iterative scheme becomes
	\begin{equation}
		\label{algo:fNon}
		x_{k+1}	:= \left( 1 - \dfrac{\alpha}{2 \left( k + \alpha \right)} \right) x_{k} + \dfrac{\alpha}{2 \left( k + \alpha \right)} T \left( x_{k} \right) + \dfrac{k}{k + \alpha} \left( T \left( x_{k} \right) - T \left( x_{k-1} \right) \right) \quad \forall k \geq 1.
	\end{equation}
	Notice that for a \emph{nonexpansive} operator $T$, which corresponds to the case $\theta =1$, the value $s:=1$ is the largest step size that can be taken.
	
	The numerical algorithm \eqref{algo:fNon} can be interpreted as a \KM iteration enhanced with the extrapolation term $\frac{k}{k + \alpha} \left( T \left( x_{k} \right) - T \left( x_{k-1} \right) \right)$ which proves to have an accelerating effect on the convergence of the fixed point residual. We learn form here that, in order to improve the convergence rate while preserving the convergence of the iterates, one must address iterative schemes that go beyond the classical \emph{Mann iteration} \cite{Mann}.  The latter allows in the update rule only nonnegative coefficients for both the previous iterates and the operator evaluations at the previous iterates.
\end{remark}

\begin{remark}\label{remark2}
	A direct application of the explicit Fast OGDA method in \cite{Bot-Csetnek-Nguyen} to the solving of the monotone equation $\left( \Id - T \right) \left( x \right) = 0$ leads for given $\alpha > 2, x_{0}, x_{1}, y_{0} \in \sH$, $0 < s < \max \left\lbrace \frac{1}{4 \theta} , \frac{1 - \theta}{2 \theta} \right\rbrace$ to the following iterative scheme: for every $k \geq 1$ set
	\begin{subequations}
		\label{algo:fOGDA}
		\begin{align}
			y_{k} 	& := x_{k} + \left( 1 - \dfrac{\alpha}{k + \alpha} \right) \left( x_{k} - x_{k-1} \right) - \dfrac{\alpha s}{2 \left( k + \alpha \right)} \left( \Id - T \right) \left( y_{k-1} \right) \\
			x_{k+1}	& := y_{k} - \dfrac{s}{2} \left( 1 + \dfrac{k}{k + \alpha} \right) \left( \left( \Id - T \right) \left( y_{k} \right) - \left( \Id - T \right) \left( y_{k-1} \right) \right) .
		\end{align}
	\end{subequations}
	If $T$ is $\theta$-averaged, then $\Id - T$ is $L$-Lipschitz with $L := \min \left\lbrace 2 \theta , \frac{\theta}{1 - \theta} \right\rbrace$ (here we make the convention $\frac{1}{0} := + \infty$), thus the upper bound of the step size is $\frac{1}{2L} = \max \left\lbrace \frac{1}{4 \theta} , \frac{1 - \theta}{2 \theta} \right\rbrace$. Indeed, we already noticed that, since $\Id - T$ is $\frac{1}{2 \theta}$-cocoercive, it is at most $2 \theta$-Lipschitz continuous. On the other hand, $T$ is $\theta$-averaged if and only if (see \cite[Proposition 4.35]{Bauschke-Combettes:book})
	\begin{equation*}
		\left\lVert T \left( x \right) - T \left( y \right) \right\rVert ^{2} \leq \left\lVert x  - y \right\rVert ^{2} - \dfrac{1 - \theta}{\theta} \left\lVert \left( \Id - T \right) \left( x \right) - \left( \Id - T \right) \left( y \right) \right\rVert ^{2} \quad \forall x , y \in \sH ,
	\end{equation*}
	which implies that $\Id - T$ is Lipschitz continuous with modulus at most $\frac{\theta}{1 - \theta}$. 
	
	Noticeably,  the step size upper bound for algorithm \eqref{algo:fOGDA} is more restrictive compared to the one in the Fast KM iteration \eqref{algo:scheme}. In addition, \eqref{algo:scheme} has an obviously simpler formulation than \eqref{algo:fOGDA}. Even if one rewrites \eqref{algo:fOGDA} in terms of a single sequence $\left(y_{k} \right)_{k \geq 0}$, it would require $y_{k}, y_{k-1}$ and $y_{k-2}$ to compute $y_{k+1}$. In comparison, \eqref{algo:scheme} requires $x_{k}$ and $x_{k-1}$ to compute $x_{k+1}$.
\end{remark}

\section{Convergence analysis}

The fundamental tool of the convergence analysis is the following discrete energy function which, for fixed $x_{*} \in \sol$ and $0 \leq \lambda \leq \alpha-1$, is defined for every $k \geq 1$ as
\begin{align*}
	\E_{\lambda,k}
	:= & \ \dfrac{1}{2} \left\lVert 2 \lambda \left( x_{k} - x_{*} \right) + 2k \left( x_{k} - x_{k-1} \right) + \dfrac{1}{2 \left( \alpha - 1 \right)} \left( 3 \alpha - 2 \right) sk\left( \Id - T \right) \left( x_{k-1} \right) \right\rVert ^{2} \nonumber \\
	& + 2 \lambda \left( \alpha - 1 - \lambda \right) \left\lVert x_{k} - x_{*} \right\rVert ^{2} + \dfrac{1}{\alpha - 1} \left( \alpha - 2 \right) \lambda sk \left\langle x_{k} - x_{*} , \left( \Id - T \right) \left( x_{k-1} \right) \right\rangle \nonumber \\
	& + \dfrac{1}{8 \left( \alpha - 1 \right) ^{2}} \left( \alpha - 2 \right) \left( 3 \alpha - 2 \right) s^{2} k^{2} \left\lVert \left( \Id - T \right) \left( x_{k-1} \right) \right\rVert ^{2} .
\end{align*}
The discrete energy function is defined in analogy with the Lyapunov energy functions used in the study of continuous time dynamical systems associated with convex minimization problems and monotone equations (\cite{Attouch-Peypouquet-Redont, Bot-Csetnek-Nguyen}). While in convex minimization, the distance from the objective function at the current iterate to its minimal objective value plays the prominent role in the definition of the discrete energy function, in the present setting, this role is taken by $ \left\lVert \left( \Id - T \right) \left( x_{k-1} \right) \right\rVert^2$. The coefficient $k^2$ in front of this term suggests the rate at which we expect this term to converge, provided the energy sequence converges as $k \rightarrow +\infty$. The same reasoning applies to the third summand in the discrete energy function, while the first two summands will play an important role in proving the convergence of the iterates.

The properties of the discrete energy function are presented in the following lemma, with the proof deferred to the \cref{appendix:proof}.

\begin{lemma}
	\label{lem:dec}
	Let $x_{*} \in \sol$ and $\left(x_{k} \right)_{k \geq 0}$ be the sequence generated by \cref{algo:fKM}. 
	Then the following statements are true:
	\begin{enumerate}
		\item
		\label{lem:dec:dif}
		for $0 \leq \lambda \leq \alpha-1$ and every $k \geq 1$ it holds
		\begin{align}
			& \E_{\lambda,k+1} - \E_{\lambda,k} \label{dec:inq} \\
			\leq \ & 2 \left( 2 - \alpha \right) \lambda s \left\langle x_{k} - x_{*} , \left( \Id - T \right) \left( x_{k} \right) \right\rangle 
			+ \omega_{1} k \left\lVert x_{k+1} - x_{k} \right\rVert ^{2} \nonumber \\
			& \quad + s \left( \omega_{2} k + \omega_{3} \right) \left\langle x_{k+1} - x_{k} , \left( \Id - T \right) \left( x_{k} \right) \right\rangle 
			+ \omega_{4} s^{2} k \left\lVert \left( \Id - T \right) \left( x_{k} \right) \right\rVert ^{2} \nonumber \\
			& \quad + \dfrac{1}{\left( \alpha - 1 \right)} \left( \alpha - 2 \right) \left( s - \dfrac{1}{\theta} \right) sk^{2} \left\lVert \left( \Id - T \right) \left( x_{k} \right) - \left( \Id - T \right) \left( x_{k-1} \right) \right\rVert ^{2} , \nonumber 
		\end{align}
		where
		\begin{subequations}
			\label{dec:const}
			\begin{align}
				\omega_{1} 	& := 4 \left( \lambda + 1 - \alpha \right) \leq 0 , \\
				\omega_{2} 	& := \dfrac{1}{\alpha - 1} \Bigl( 4 \left( \alpha - 1 \right) \left( \lambda + 1 - \alpha \right) + \alpha \left( 2 - \alpha \right) \Bigr) \leq 0 , \\
				\omega_{3} 	& := \dfrac{1}{\alpha - 1} \left( 2 \alpha \left( \alpha - 1 \right) \left( \lambda + 1 - \alpha \right) + \alpha - 2 \left( \alpha - 1 \right) ^{2} + 2 \left( 2 - \alpha \right) \left( \alpha - 1 \right) \right) , \\
				\omega_{4} 	& := \dfrac{1}{2 \left( \alpha - 1 \right)} \left( 2 - \alpha \right) \left( 3 \alpha - 2 \right) \leq 0 ;
			\end{align}
		\end{subequations}
		
		\item 
		\label{lem:dec:bnd}
		for $0 \leq \lambda \leq \frac{3 \alpha}{4} - \frac{1}{2}$ the sequence $\left( \E_{\lambda,k} \right)_{k \geq 1}$ is nonnegative.
	\end{enumerate}
\end{lemma}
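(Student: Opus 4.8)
I would prove the two assertions separately, beginning with the nonnegativity assertion, which is self-contained, and treating the difference estimate \eqref{dec:inq} as the computational core. For the nonnegativity assertion I would exploit the fact that the energy is written so that its leading summand is already a perfect square. Writing $a := x_{k}-x_{*}$ and $c := \left( \Id - T \right)\left(x_{k-1}\right)$, the term $\frac{1}{2}\bigl\lVert 2\lambda a + 2k\left(x_{k}-x_{k-1}\right) + \frac{3\alpha-2}{2\left(\alpha-1\right)}sk\,c \bigr\rVert^{2}$ is nonnegative on its own, so it suffices to show that the sum of the remaining three summands,
\[
R_{k} := 2\lambda\left(\alpha-1-\lambda\right)\left\lVert a \right\rVert^{2} + \dfrac{\alpha-2}{\alpha-1}\lambda sk\left\langle a, c\right\rangle + \dfrac{\left(\alpha-2\right)\left(3\alpha-2\right)}{8\left(\alpha-1\right)^{2}}s^{2}k^{2}\left\lVert c \right\rVert^{2},
\]
is nonnegative. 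Bounding the middle term from below through the Cauchy-Schwarz inequality $\left\langle a, c\right\rangle \geq -\left\lVert a \right\rVert\left\lVert c \right\rVert$ (its coefficient being nonnegative) reduces $R_{k}$ to a two-variable quadratic form in $\left\lVert a \right\rVert$ and $\left\lVert c \right\rVert$. Its diagonal entries are nonnegative exactly when $0 \leq \lambda \leq \alpha-1$, while the nonnegativity of its determinant, after cancelling the positive common factor $\frac{\left(\alpha-2\right)\lambda s^{2}k^{2}}{\left(\alpha-1\right)^{2}}$, reduces to $4\lambda \leq 3\alpha-2$, that is $\lambda \leq \frac{3\alpha}{4}-\frac{1}{2}$. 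This is precisely the stated range, so I expect this part to go through cleanly.

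For the difference estimate \eqref{dec:inq} I would compute $\E_{\lambda,k+1}-\E_{\lambda,k}$ directly. I would first obtain $\E_{\lambda,k+1}$ by shifting $k \mapsto k+1$ in the definition, so that the vector inside the leading square becomes $2\lambda\left(x_{k+1}-x_{*}\right) + 2\left(k+1\right)\left(x_{k+1}-x_{k}\right) + \frac{3\alpha-2}{2\left(\alpha-1\right)}s\left(k+1\right)\left( \Id - T \right)\left(x_{k}\right)$. The crucial algebraic step is to use the rearranged recursion \eqref{dis:d-u} to eliminate $2k\left(x_{k}-x_{k-1}\right)$ in favour of $x_{k+1}-x_{k}$, $\left( \Id - T \right)\left(x_{k}\right)$ and $\left( \Id - T \right)\left(x_{k}\right)-\left( \Id - T \right)\left(x_{k-1}\right)$. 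Expanding the squared-norm difference of the leading summands together with the differences of the three lower-order summands and collecting all contributions, I expect the exact identity for $\E_{\lambda,k+1}-\E_{\lambda,k}$ to reduce to the four monomials $\left\langle x_{k}-x_{*},\left( \Id - T \right)\left(x_{k}\right)\right\rangle$, $k\left\lVert x_{k+1}-x_{k}\right\rVert^{2}$, $\left\langle x_{k+1}-x_{k},\left( \Id - T \right)\left(x_{k}\right)\right\rangle$ (carrying a coefficient affine in $k$) and $k\left\lVert \left( \Id - T \right)\left(x_{k}\right)\right\rVert^{2}$, together with a single cross term of the form $k^{2}\left\langle x_{k}-x_{k-1},\left( \Id - T \right)\left(x_{k}\right)-\left( \Id - T \right)\left(x_{k-1}\right)\right\rangle$ and an exact multiple $\frac{\alpha-2}{\alpha-1}s^{2}k^{2}\left\lVert \left( \Id - T \right)\left(x_{k}\right)-\left( \Id - T \right)\left(x_{k-1}\right)\right\rVert^{2}$.

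The only place where an inequality rather than an identity enters is this cross term: its coefficient being negative, I would invoke the $\frac{1}{2\theta}$-cocoercivity \eqref{pre:coco} of $\Id-T$ at the pair $x_{k},x_{k-1}$ to bound $\left\langle x_{k}-x_{k-1},\left( \Id - T \right)\left(x_{k}\right)-\left( \Id - T \right)\left(x_{k-1}\right)\right\rangle$ from below by $\frac{1}{2\theta}\left\lVert \left( \Id - T \right)\left(x_{k}\right)-\left( \Id - T \right)\left(x_{k-1}\right)\right\rVert^{2}$ and thereby pass to an upper estimate. This is exactly what turns the exact coefficient $\frac{\alpha-2}{\alpha-1}s^{2}k^{2}$ into $\frac{\alpha-2}{\alpha-1}\left(s-\frac{1}{\theta}\right)sk^{2}$, the sign of which is controlled by the step-size restriction $s \leq \frac{1}{\theta}$. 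The coefficient of $\left\langle x_{k}-x_{*},\left( \Id - T \right)\left(x_{k}\right)\right\rangle$ then reads off as $2\left(2-\alpha\right)\lambda s$, while those of the remaining three monomials must be matched against the constants in \eqref{dec:const}, with the affine-in-$k$ coefficient of $\left\langle x_{k+1}-x_{k},\left( \Id - T \right)\left(x_{k}\right)\right\rangle$ accounting for both $\omega_{2}$ and $\omega_{3}$.

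The main obstacle is the bookkeeping in the difference estimate: the expansion generates a large number of norm and inner-product terms, and the content of the lemma is precisely that the coefficients chosen in the definition of $\E_{\lambda,k}$ — in particular the factor $\frac{3\alpha-2}{2\left(\alpha-1\right)}$ in the leading square together with the matching coefficients of the third and fourth summands — are tuned so that every unwanted cross term cancels, leaving only the terms listed above. Carrying out the substitution of \eqref{dis:d-u} consistently and correctly identifying the single inner product to which cocoercivity must be applied are the delicate points; the remaining manipulations are laborious but routine.
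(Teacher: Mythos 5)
Your proposal is correct and follows essentially the same route as the paper: an exact expansion of the energy difference driven by the recursion \eqref{dis:d-u}, with the cocoercivity \eqref{pre:coco} applied at the pair $\left(x_{k},x_{k-1}\right)$ as the decisive inequality, and a quadratic-form argument for the nonnegativity. Two organizational differences are worth recording. First, for part (ii) the paper completes squares explicitly, rewriting $\E_{\lambda,k}$ as a sum of manifestly nonnegative terms via a parallelogram-type identity, whereas you check positive semidefiniteness of the residual two-variable quadratic form through its diagonal entries and discriminant; the two are equivalent, and your discriminant computation does reduce exactly to $4\lambda\leq 3\alpha-2$, so this part goes through cleanly (note only that $\lambda\leq\frac{3\alpha}{4}-\frac{1}{2}$ already forces $\lambda<\alpha-1$ because $\alpha>2$, so the diagonal condition is automatic). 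Second, for part (i) the paper does not manipulate the raw difference $\E_{\lambda,k+1}-\E_{\lambda,k}$: it augments the energy with the telescoping term $\frac{\left(\alpha-2\right)\alpha s^{2}k}{4\left(\alpha-1\right)}\left\lVert\left(\Id-T\right)\left(x_{k-1}\right)\right\rVert^{2}$, proves an identity for the augmented difference whose cross term is $\left\langle x_{k+1}-x_{k},\left(\Id-T\right)\left(x_{k}\right)-\left(\Id-T\right)\left(x_{k-1}\right)\right\rangle$, and only afterwards substitutes \eqref{dis:d-u} back and applies cocoercivity together with the identity $\left\langle u,u-v\right\rangle=\frac{1}{2}\left\lVert u\right\rVert^{2}+\frac{1}{2}\left\lVert u-v\right\rVert^{2}-\frac{1}{2}\left\lVert v\right\rVert^{2}$; your plan of substituting \eqref{dis:d-u} first, so that the standalone $\left\lVert\left(\Id-T\right)\left(x_{k-1}\right)\right\rVert^{2}$ and $\left\langle\left(\Id-T\right)\left(x_{k}\right),\left(\Id-T\right)\left(x_{k}\right)-\left(\Id-T\right)\left(x_{k-1}\right)\right\rangle$ contributions cancel already in the raw difference, is a legitimate reorganization of the same computation. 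One overclaim should be corrected: cocoercivity is not the only inequality used. To pass from the affine-in-$k$ coefficients that the exact identity produces for $\left\lVert x_{k+1}-x_{k}\right\rVert^{2}$ and $\left\lVert\left(\Id-T\right)\left(x_{k}\right)\right\rVert^{2}$ to the stated $\omega_{1}k$ and $\omega_{4}s^{2}k$, one must also discard nonpositive constant-in-$k$ remainders (using $\lambda+1-\alpha\leq0$ and $2-\alpha<0$), exactly as the paper points out at the end of its proof of part (i).
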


In the following lemma, we demonstrate that there exist infinitely many choices for the parameter $\lambda$ (depending on $\alpha$) for which an essential quantity in the expression on the right-hand side of \eqref{dec:inq} becomes non-positive after a finite number of iterations. As we will see in \cref{prop:sum}, this behaviour will lead to the convergence of the corresponding discrete energy function $ \E_{\lambda,k}$ as $k \rightarrow +\infty$. The proof of \cref{lem:trunc} can also be found in the \cref{appendix:proof}.

\begin{lemma}
	\label{lem:trunc}
	Let 
	\begin{subequations}
		\label{trunc:fea}
		\begin{align}
			\underline{\lambda} \left( \alpha \right) & := \dfrac{\alpha^{2}}{8 \left( \alpha - 1 \right)} + \dfrac{\alpha - 1}{2} - \dfrac{1}{8 \left( \alpha - 1 \right)} \left( \alpha - 2 \right) \sqrt{\left( \alpha - 2 \right) \left( 5 \alpha - 2 \right)} >0, \\
			\overline{\lambda} \left( \alpha \right) & := \min \left\lbrace \dfrac{3 \alpha}{4} - \dfrac{1}{2} , \dfrac{\alpha^{2}}{8 \left( \alpha - 1 \right)} + \dfrac{\alpha - 1}{2} + \dfrac{1}{8 \left( \alpha - 1 \right)} \left( \alpha - 2 \right) \sqrt{\left( \alpha - 2 \right) \left( 5 \alpha - 2 \right)} \right\rbrace .
		\end{align}
	\end{subequations}
	Then for every $\lambda$ satisfying $\underline{\lambda} \left( \alpha \right) < \lambda < \overline{\lambda} \left( \alpha \right)$ one can find an integer $k \left( \lambda \right) \geq 1$ with the property that the following inequality holds for every $k \geq k \left( \lambda \right)$
	\begin{align}
		R_{k} 
		:= \sqrt{\dfrac{5 \alpha - 2}{2 \left( 3 \alpha - 2 \right)}} \omega_{1} k \left\lVert x_{k+1} - x_{k} \right\rVert ^{2} + s \left( \omega_{2} k + \omega_{3} \right) \left\langle x_{k+1} - x_{k} , \left( \Id - T \right) \left( x_{k} \right) \right\rangle & \nonumber \\
		+ \sqrt{\dfrac{5 \alpha - 2}{2 \left( 3 \alpha - 2 \right)}} \omega_{4} s^{2} k \left\lVert \left( \Id - T \right) \left( x_{k} \right) \right\rVert ^{2} & \leq 0, \label{trunc:Rk}
	\end{align}
	where $\omega_1$, $\omega_2$, $\omega_3$ and $\omega_4$ are the constants defined in \eqref{dec:const}.
\end{lemma}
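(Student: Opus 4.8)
The plan is to treat $R_k$ as a quadratic form in the two vectors $a := x_{k+1}-x_k$ and $b := (\Id-T)(x_k)$ and reduce the desired sign to a scalar condition. Writing $c := \sqrt{\tfrac{5\alpha-2}{2(3\alpha-2)}}>0$, we have
\[
	R_k = c\,\omega_1 k\,\|a\|^2 + s(\omega_2 k + \omega_3)\langle a, b\rangle + c\,\omega_4 s^2 k\,\|b\|^2 .
\]
Since $\overline{\lambda}(\alpha) \le \tfrac{3\alpha}{4}-\tfrac12 < \alpha-1$ for every $\alpha>2$, on the admissible range one has $\lambda<\alpha-1$, hence $\omega_1 = 4(\lambda+1-\alpha)<0$; moreover $\omega_4<0$ holds for all $\alpha>2$. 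Thus both diagonal coefficients $c\omega_1 k$ and $c\omega_4 s^2 k$ are strictly negative. Applying Cauchy--Schwarz $|\langle a,b\rangle|\le\|a\|\,\|b\|$ to the cross term, I would bound
\[
	R_k \le c\,\omega_1 k\,\|a\|^2 + s\,|\omega_2 k+\omega_3|\,\|a\|\,\|b\| + c\,\omega_4 s^2 k\,\|b\|^2 ,
\]
so that $R_k\le 0$ will follow once the right-hand side is nonpositive for all values of $\|a\|$ and $s\|b\|$.

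The right-hand side is a real quadratic form in $(\|a\|,s\|b\|)$ whose associated symmetric matrix has the two strictly negative diagonal entries $c\omega_1 k$, $c\omega_4 k$ and off-diagonal entry $\tfrac12|\omega_2 k+\omega_3|$. Such a form is nonpositive on the first quadrant exactly when this matrix is negative semidefinite, which, given the negative diagonal, amounts to the determinant being nonnegative, i.e.\ $(\omega_2 k+\omega_3)^2 \le 4c^2\,\omega_1\omega_4\,k^2$. Setting $\gamma := 4c^2\omega_1\omega_4$ (which is positive, as $\omega_1,\omega_4<0$), this reads $(\gamma-\omega_2^2)k^2 - 2\omega_2\omega_3 k - \omega_3^2 \ge 0$, a quadratic in $k$ with leading coefficient $\gamma-\omega_2^2$. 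The crux is therefore to establish the strict inequality $\gamma>\omega_2^2$ for every $\lambda\in(\underline{\lambda}(\alpha),\overline{\lambda}(\alpha))$: once this holds, the quadratic in $k$ has a positive leading term and so is nonnegative for all $k$ beyond its larger root, furnishing the threshold $k(\lambda)$.

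It remains to analyze the sign of $\gamma-\omega_2^2$ as a function of $\lambda$. Substituting $\omega_4=\tfrac{(2-\alpha)(3\alpha-2)}{2(\alpha-1)}$ and $4c^2=\tfrac{2(5\alpha-2)}{3\alpha-2}$ gives $\gamma = 4(\lambda+1-\alpha)\tfrac{(5\alpha-2)(2-\alpha)}{\alpha-1}$, which is affine in $\lambda$, while $\omega_2^2$ is the square of an affine function of $\lambda$; hence $\gamma-\omega_2^2$ is a downward-opening parabola in $\lambda$, and I would compute its two roots explicitly. Introducing $\mu := \lambda+1-\alpha$, the equation $\gamma=\omega_2^2$ becomes $16\mu^2 - 4\tfrac{(2-\alpha)(3\alpha-2)}{\alpha-1}\mu + \tfrac{\alpha^2(2-\alpha)^2}{(\alpha-1)^2}=0$, whose discriminant simplifies, through the factorization $(3\alpha-2)^2-4\alpha^2 = 5\alpha^2-12\alpha+4 = (5\alpha-2)(\alpha-2)$, to $\tfrac{16(\alpha-2)^3(5\alpha-2)}{(\alpha-1)^2}$. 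Resolving in $\mu$ and returning to $\lambda=\mu+\alpha-1$ yields the two roots
\[
	\frac{\alpha^2}{8(\alpha-1)} + \frac{\alpha-1}{2} \pm \frac{(\alpha-2)\sqrt{(\alpha-2)(5\alpha-2)}}{8(\alpha-1)} ,
\]
which are exactly $\underline{\lambda}(\alpha)$ (the minus sign) and the second entry of the minimum defining $\overline{\lambda}(\alpha)$ (the plus sign). Since $\overline{\lambda}(\alpha)$ is the smaller of this upper root and $\tfrac{3\alpha}{4}-\tfrac12$, the whole interval $(\underline{\lambda}(\alpha),\overline{\lambda}(\alpha))$ lies strictly between the two roots of the downward parabola, so $\gamma-\omega_2^2>0$ there.

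I expect the main obstacle to be the algebra of the preceding step, namely verifying that the discriminant collapses via the identity $(3\alpha-2)^2-4\alpha^2=(5\alpha-2)(\alpha-2)$ so that the resulting square root matches the term appearing in $\underline{\lambda}(\alpha)$ and $\overline{\lambda}(\alpha)$; this is precisely the computation for whose sake the weighting constant $c=\sqrt{(5\alpha-2)/(2(3\alpha-2))}$ was built into the definition of $R_k$. The remaining ingredients---the Cauchy--Schwarz reduction, the $2\times2$ negative-semidefiniteness criterion, and the eventual nonnegativity of the quadratic in $k$---are routine. I also note that $\omega_3$ never enters the condition on $\lambda$, affecting only the magnitude of the threshold $k(\lambda)$.
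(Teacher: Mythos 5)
Your proof is correct and takes essentially the same route as the paper: both reduce \eqref{trunc:Rk} to the scalar condition $\left( \omega_{2} k + \omega_{3} \right)^{2} \leq \frac{2 \left( 5 \alpha - 2 \right)}{3 \alpha - 2} \, \omega_{1} \omega_{4} \, k^{2}$ (the paper via \cref{lem:quad}, you via Cauchy--Schwarz plus the $2 \times 2$ negative-semidefiniteness test, which amounts to the same thing), then substitute $\mu = \lambda + 1 - \alpha$ (the paper's $\xi$), collapse the discriminant through $\left( 3 \alpha - 2 \right)^{2} - 4 \alpha^{2} = \left( \alpha - 2 \right) \left( 5 \alpha - 2 \right)$, and identify the two roots of the resulting parabola with $\underline{\lambda} \left( \alpha \right)$ and the second entry in the minimum defining $\overline{\lambda} \left( \alpha \right)$. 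The one item you skip is the paper's verification that $0 < \alpha - 1 - \frac{\left( \alpha - 2 \right) \left( 3 \alpha - 2 \right)}{8 \left( \alpha - 1 \right)} < \frac{3 \alpha}{4} - \frac{1}{2}$, which establishes both the positivity of $\underline{\lambda} \left( \alpha \right)$ asserted in \eqref{trunc:fea} and the nonemptiness of the interval $\left( \underline{\lambda} \left( \alpha \right) , \overline{\lambda} \left( \alpha \right) \right)$; this is routine algebra, but strictly it is part of the claim, and without it the implication you prove could in principle be vacuous.
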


\begin{proposition}
	\label{prop:sum}
	Let $x_{*} \in \sol$ and $\left(x_{k} \right) _{k \geq 0}$ be the sequence generated by \cref{algo:fKM}.
	Then it holds
	\begin{subequations}
		\label{sum:state}
		\begin{align}
			\mysum_{k \geq 1} \left\langle x_{k} - x_{*} , \left( \Id - T \right) \left( x_{k} \right) \right\rangle & < + \infty , \label{sum:vi} \\
			\mysum_{k \geq 1} k \left\lVert x_{k+1} - x_{k} \right\rVert ^{2} & < + \infty , \label{sum:dx} \\
			\mysum_{k \geq 1} k \left\lVert \left( \Id - T \right) \left( x_{k} \right) \right\rVert ^{2} & < + \infty , \label{sum:I-T} \\
			\left( \dfrac{1}{\theta} - s \right) \mysum_{k \geq 1} k^{2} \left\lVert \left( \Id - T \right) \left( x_{k} \right) - \left( \Id - T \right) \left( x_{k-1} \right) \right\rVert ^{2} & < + \infty . \label{sum:d(I-T)}
		\end{align}
	\end{subequations}
	In addition, the sequence $\left( \E_{\lambda,k} \right) _{k \geq 1}$ converges for every $\underline{\lambda} \left( \alpha \right) < \lambda < \overline{\lambda} \left( \alpha \right)$, where the pair $\left( \underline{\lambda} \left( \alpha \right) , \overline{\lambda} \left( \alpha \right) \right)$ is defined in \eqref{trunc:fea}. Consequently, the sequence $\left(x_{k} \right) _{k \geq 0}$ is bounded.
\end{proposition}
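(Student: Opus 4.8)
The plan is to squeeze everything out of the one-step estimate \eqref{dec:inq} in \cref{lem:dec} by absorbing its indefinite middle terms into the eventually nonpositive quantity $R_k$ of \cref{lem:trunc}, and then to telescope. Throughout I fix $\lambda$ with $\underline{\lambda}(\alpha) < \lambda < \overline{\lambda}(\alpha)$, and first record that such a $\lambda$ is admissible for both parts of \cref{lem:dec}: since $\alpha > 2$ one has $\overline{\lambda}(\alpha) \leq \frac{3\alpha}{4} - \frac{1}{2} < \alpha - 1$, so that $0 < \lambda < \alpha - 1$ and $0 \leq \lambda \leq \frac{3\alpha}{4} - \frac{1}{2}$. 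Consequently $\omega_1 = 4(\lambda + 1 - \alpha) < 0$, while $\omega_4 < 0$ for all $\alpha > 2$, and the coefficient $2(2-\alpha)\lambda s < 0$.

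First I would set $c := \sqrt{\frac{5\alpha-2}{2(3\alpha-2)}}$ and note that $c \in (0,1)$, since the radicand equals $\frac{5\alpha-2}{6\alpha-4} < 1$ for $\alpha > 2$. Comparing the definition of $R_k$ in \eqref{trunc:Rk} with the three middle terms of \eqref{dec:inq}, the inner-product term has the identical coefficient $s(\omega_2 k + \omega_3)$ in both, so these middle terms equal $R_k + (1-c)\bigl(\omega_1 k \lVert x_{k+1}-x_k\rVert^2 + \omega_4 s^2 k \lVert(\Id-T)(x_k)\rVert^2\bigr)$. Because $1-c>0$, $\omega_1<0$ and $\omega_4<0$, the bracketed remainder is nonpositive, and by \cref{lem:trunc} $R_k \leq 0$ for $k \geq k(\lambda)$. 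The first term on the right of \eqref{dec:inq} is nonpositive since $\langle x_k - x_*, (\Id-T)(x_k)\rangle \geq 0$ by \eqref{pre:coco}, and the last term is nonpositive since $s \leq \frac1\theta$ and $\alpha > 2$. Hence $\E_{\lambda,k+1} \leq \E_{\lambda,k}$ for every $k \geq k(\lambda)$, so $(\E_{\lambda,k})_{k\geq 1}$ is eventually nonincreasing; being nonnegative by \cref{lem:dec}, it converges, which is the asserted convergence of the energy.

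Next I would telescope the same inequality, written in the form $\E_{\lambda,k} - \E_{\lambda,k+1} \geq (\text{sum of nonnegative quantities})$, over $k \geq k(\lambda)$; since $\E_{\lambda,k}\geq 0$, every partial sum of each individual nonnegative summand is bounded by $\E_{\lambda,k(\lambda)}$. Reading off the four summands, and using the strict positivity of the coefficients $2(\alpha-2)\lambda s$, $-(1-c)\omega_1$ and $-(1-c)\omega_4 s^2$ together with the nonnegativity of $\frac{(\alpha-2)(\frac1\theta-s)s}{\alpha-1}$, and finally adding back the finitely many terms with $k < k(\lambda)$, yields \eqref{sum:vi}, \eqref{sum:dx}, \eqref{sum:I-T} and \eqref{sum:d(I-T)}, respectively.

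For boundedness the decisive structural observation is that $\E_{\lambda,k}$ is \emph{affine} in $\lambda$: expanding the square in its definition, the $\lambda^2 \lVert x_k - x_*\rVert^2$ contributions cancel and one obtains $\E_{\lambda,k} = \lambda B_k + D_k$, where, with $v_k := 2k(x_k - x_{k-1}) + \frac{3\alpha-2}{2(\alpha-1)} sk (\Id-T)(x_{k-1})$,
\[
	B_k := 2(\alpha-1)\lVert x_k - x_*\rVert^2 + 2\langle x_k - x_*, v_k\rangle + \frac{\alpha-2}{\alpha-1} sk \langle x_k - x_*, (\Id-T)(x_{k-1})\rangle
\]
and $D_k := \frac{1}{2}\lVert v_k\rVert^2 + \frac{(\alpha-2)(3\alpha-2)}{8(\alpha-1)^2} s^2 k^2 \lVert(\Id-T)(x_{k-1})\rVert^2 \geq 0$. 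Since the convergence of $(\E_{\lambda,k})$ holds for \emph{every} $\lambda$ in the open interval, choosing two distinct values $\lambda_1 \neq \lambda_2$ shows that $B_k = (\E_{\lambda_2,k} - \E_{\lambda_1,k})/(\lambda_2 - \lambda_1)$ and hence $D_k$ both converge, in particular are bounded. Boundedness of $D_k$ forces $\lVert v_k\rVert$ and $k\lVert(\Id-T)(x_{k-1})\rVert$ to be bounded; feeding this and the boundedness of $B_k$ into the Cauchy–Schwarz estimates for the two inner products appearing in $B_k$ gives $2(\alpha-1)\lVert x_k - x_*\rVert^2 \leq C_1 + C_2 \lVert x_k - x_*\rVert$ for constants $C_1,C_2 \geq 0$, a quadratic inequality bounding $\lVert x_k - x_*\rVert$, so $(x_k)_{k\geq 0}$ is bounded. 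I expect this final step to be the main obstacle: the summability estimates alone only give $\lVert(\Id-T)(x_{k-1})\rVert = o(1/\sqrt k)$ and $\lVert x_k - x_{k-1}\rVert = o(1/\sqrt k)$, so $v_k$ and the last term of $D_k$ are a priori merely $o(\sqrt k)$ and $o(k)$, not bounded, and a single energy cannot separate the decisive $\lVert x_k - x_*\rVert^2$ contribution from these possibly growing pieces; it is exactly the affine dependence on $\lambda$, exploited over a continuum of $\lambda$, that isolates $B_k$ and $D_k$ and closes the argument, everything else being careful but routine sign-tracking of the constants in \eqref{dec:const}.
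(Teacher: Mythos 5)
Your proof is correct. For the convergence of the energy and the four summability statements your argument coincides with the paper's: both combine \eqref{dec:inq} with \cref{lem:trunc} exactly as you do, splitting the middle terms of \eqref{dec:inq} as $R_k$ plus the $(1-c)$-weighted nonpositive remainder; the paper then invokes \cref{lem:quasi-Fej}, while you argue by eventual monotonicity, nonnegativity and telescoping, which is the same mechanism in elementary form. Where you genuinely diverge is the boundedness of $(x_k)_{k \geq 0}$. The paper gets it from a \emph{single} energy: in the appendix proof of \cref{lem:dec}, identity \eqref{dec:eq} rewrites $\E_{\lambda,k}$ as a sum of nonnegative terms, one of which is $2\lambda(\alpha-1)\bigl(1-\tfrac{4\lambda}{3\alpha-2}\bigr)\lVert x_k - x_*\rVert^2$ with strictly positive coefficient for $0 < \lambda < \tfrac{3\alpha}{4}-\tfrac12$, so a convergent (hence bounded) energy immediately bounds $\lVert x_k - x_*\rVert$. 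You instead exploit the affine dependence $\E_{\lambda,k} = \lambda B_k + D_k$, use convergence of the energy for \emph{two} distinct admissible values of $\lambda$ to isolate $B_k$ and $D_k$, and close with Cauchy--Schwarz plus a quadratic inequality. Your route is sound, and it has the merit of using only the \emph{stated} conclusion of \cref{lem:dec} (nonnegativity) rather than the finer decomposition \eqref{dec:eq}, which is buried in its proof; this also explains your closing remark that a single energy ``cannot separate'' the $\lVert x_k - x_*\rVert^2$ contribution --- it can, but only through \eqref{dec:eq}, which is invisible at statement level. Notably, your two-$\lambda$ device is precisely the mechanism the paper deploys later, in the proof of \cref{thm:conv} (see \eqref{defi:Ek:eq} and \eqref{conv:im:Ed-lambad}), to establish the existence of $\lim_{k \to +\infty}\lVert x_k - x_*\rVert$; so your boundedness argument anticipates that step, at the price of being somewhat longer than the paper's one-line appeal to the structure of the energy.
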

\begin{proof}
	Let $\left( \underline{\lambda} \left( \alpha \right) , \overline{\lambda} \left( \alpha \right) \right)$ be the pair defined in \eqref{trunc:fea} and $\underline{\lambda} \left( \alpha \right) < \lambda < \overline{\lambda} \left( \alpha \right)$. By \cref{lem:trunc}, there exists an integer $k \left( \lambda \right) \geq 1$ such that for every $k \geq k(\lambda)$ it holds
	\begin{align*}
		& \ s \left( \omega_{2} k + \omega_{3} \right) \left\langle x_{k+1} - x_{k} , \left( \Id - T \right) \left( x_{k} \right) \right\rangle \\
		\leq & - \sqrt{\dfrac{5 \alpha - 2}{2 \left( 3 \alpha - 2 \right)}} \omega_{1} k \left\lVert x_{k+1} - x_{k} \right\rVert ^{2} 
		- \sqrt{\dfrac{5 \alpha - 2}{2 \left( 3 \alpha - 2 \right)}} \omega_{4} s^{2} k \left\lVert \left( \Id - T \right) \left( x_{k} \right) \right\rVert ^{2}.
	\end{align*}
	By plugging this inequality into \eqref{dec:inq} it follows that for every $k \geq k(\lambda)$ it holds
	\begin{align*}
		& \E_{\lambda,k+1} - \E_{\lambda,k} \nonumber \\
		\leq & \ 2 \left( 2 - \alpha \right) \lambda s \left\langle x_{k} - x_{*} , \left( \Id - T \right) \left( x_{k} \right) \right\rangle 
		+ \left( 1 - \sqrt{\dfrac{5 \alpha - 2}{2 \left( 3 \alpha - 2 \right)}} \right) \omega_{1} k \left\lVert x_{k+1} - x_{k} \right\rVert ^{2} \nonumber \\
		& \ + \left( 1 - \sqrt{\dfrac{5 \alpha - 2}{2 \left( 3 \alpha - 2 \right)}} \right) \omega_{4} s^{2} k \left\lVert \left( \Id - T \right) \left( x_{k} \right) \right\rVert ^{2} \nonumber \\
		& \ + \dfrac{1}{\left( \alpha - 1 \right)} \left( \alpha - 2 \right) \left( s - \dfrac{1}{\theta} \right) sk^{2} \left\lVert \left( \Id - T \right) \left( x_{k} \right) - \left( \Id - T \right) \left( x_{k-1} \right) \right\rVert ^{2} .
	\end{align*}
	Taking into account that $\omega_{1}, \omega_{4} \leq 0$ (see \eqref{dec:const}), and  $\frac{5 \alpha - 2}{2 \left( 3 \alpha - 2 \right)} < 1$, we can apply \cref{lem:quasi-Fej} to obtain the summability results in \eqref{sum:state} as well as the fact that the sequence $\left(\E_{\lambda,k} \right) _{k \geq 1}$ is convergent. Since $0 < \underline{\lambda} \left( \alpha \right) < \lambda < \overline{\lambda} \left( \alpha \right) < \alpha - 1$, the boundedness of $\left(x_{k} \right)_{k \geq 0}$ will then follow from the definition of the discrete energy function $\left(\E_{\lambda,k} \right)_{k \geq 1}$.
\end{proof}

Next we will show the  convergence of the sequence of iterates. The proof relies on the Opial Lemma (see \cref{lem:Opial:dis}) and the \emph{demiclosedness principle} for nonexpansive operators. According to this principle, if $(z_{k})_{k \geq 0} \subseteq \sH$ is a sequence which converges weakly to  $z \in \sH$ such that $z_{k} - T \left( z_{k} \right)$ converges strongly to $0$ as $k \rightarrow +\infty$, then $z \in \sol$ (see \cite[Corollary 4.28]{Bauschke-Combettes:book}).
\begin{theorem}
	\label{thm:conv}
	Let $\left(x_{k} \right)_{k \geq 0}$ be the sequence generated by \cref{algo:fKM}.
	Then $\left( x_{k} \right)_{k \geq 0}$ converges weakly to an element in $\sol$ as $k \rightarrow +\infty$.
\end{theorem}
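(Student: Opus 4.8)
The plan is to apply the Opial Lemma (\cref{lem:Opial:dis}): it suffices to establish that (a) for every $x_* \in \sol$ the limit $\lim_{k \to +\infty} \lVert x_k - x_* \rVert$ exists, and (b) every weak sequential cluster point of $(x_k)_{k \geq 0}$ belongs to $\sol$. I would settle (b) first. From \eqref{sum:I-T} we have $\sum_{k \geq 1} k \lVert (\Id - T)(x_k) \rVert^2 < +\infty$, so in particular $(\Id - T)(x_k) \to 0$ strongly, i.e. the sequence is asymptotically regular. Since $(x_k)_{k \geq 0}$ is bounded by \cref{prop:sum}, it has weak cluster points; if $x_{k_j} \rightharpoonup \bar{x}$, the demiclosedness principle for the nonexpansive operator $T$ (recall that a $\theta$-averaged operator is nonexpansive) yields $\bar{x} = T(\bar{x})$, that is $\bar{x} \in \sol$.

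The core of the argument is (a), and the natural idea is to exploit that the energy is a first-degree polynomial in $\lambda$. Writing $\E_{\lambda,k} = \lambda P_k + C_k$ and recalling from \cref{prop:sum} that $(\E_{\lambda,k})_{k \geq 1}$ converges for every $\lambda$ in the nondegenerate interval $(\underline{\lambda}(\alpha), \overline{\lambda}(\alpha))$, I would evaluate at two distinct values of $\lambda$ to deduce that both $P_k$ and $C_k$ converge; let $P_k \to P_\infty$. A direct expansion (the $\lambda^2$-terms cancel) gives
\begin{equation*}
	P_k = 2(\alpha - 1) \lVert x_k - x_* \rVert^2 + 4k \langle x_k - x_*, x_k - x_{k-1} \rangle + 4sk \langle x_k - x_*, (\Id - T)(x_{k-1}) \rangle .
\end{equation*}

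Setting $\phi_k := \lVert x_k - x_* \rVert^2$ and $V_k := \langle x_k - x_*, (\Id - T)(x_k) \rangle \geq 0$ (by cocoercivity), I would rewrite $P_k$ as a first-order linear difference equation for $\phi_k$. Using $2 \langle x_k - x_*, x_k - x_{k-1} \rangle = \phi_k - \phi_{k-1} + \lVert x_k - x_{k-1} \rVert^2$ and splitting $x_k - x_* = (x_{k-1} - x_*) + (x_k - x_{k-1})$ in the last pairing of $P_k$, this takes the form $(k + \alpha - 1) \phi_k - k \phi_{k-1} = \tfrac{1}{2} Q_k$, where $Q_k = P_k - 4sk V_{k-1} - \rho_k$ and $\rho_k := 2k \lVert x_k - x_{k-1} \rVert^2 + 4sk \langle x_k - x_{k-1}, (\Id - T)(x_{k-1}) \rangle$. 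The summability estimates \eqref{sum:dx} and \eqref{sum:I-T} force $k \lVert x_k - x_{k-1} \rVert^2 \to 0$ and $k \lVert (\Id - T)(x_{k-1}) \rVert^2 \to 0$, whence $\rho_k \to 0$ and $Q_k = P_\infty + o(1) - 4sk V_{k-1}$. Solving with the integrating factor $\mu_k := \prod_{j=2}^{k} \frac{j + \alpha - 1}{j}$, which grows like $k^{\alpha - 1} \to +\infty$ since $\alpha > 2$, telescoping yields $\mu_k \phi_k = \phi_1 + \tfrac{1}{2} \sum_{j=2}^{k} \frac{\mu_{j-1}}{j} Q_j$. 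Dividing by $\mu_k$, the constant-plus-$o(1)$ part converges to $\frac{P_\infty}{2(\alpha - 1)}$ by a Toeplitz averaging argument (using $\sum_{j \leq k} \frac{\mu_{j-1}}{j} \sim \frac{\mu_k}{\alpha - 1}$), while the contribution $-\frac{2s}{\mu_k} \sum_{j=2}^{k} \mu_{j-1} V_{j-1}$ tends to $0$ by Kronecker's lemma, since $\sum_{k \geq 1} V_k < +\infty$ by \eqref{sum:vi} and $\mu_k \uparrow +\infty$. Hence $\lim_{k \to +\infty} \phi_k$ exists, which is exactly (a).

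I expect (a) to be the main obstacle. Because the inertial coefficient $\frac{k}{k+\alpha} \to 1$, the scheme is not quasi-Fej\'er: the distances $\lVert x_k - x_* \rVert$ need not be monotone nor of bounded variation, and the pairing $k \langle x_k - x_*, (\Id - T)(x_{k-1}) \rangle$ is only $o(\sqrt{k})$ under a crude Cauchy-Schwarz bound, so no naive telescoping closes the argument. The decisive points are therefore to recognize the \emph{exact} first-order linear recursion satisfied by $\phi_k$ --- which the affine dependence of $\E_{\lambda,k}$ on $\lambda$ together with the convergence of $P_k$ makes available --- and to observe that its inhomogeneity decomposes into a constant leading term, handled by averaging, plus the sign-definite summable term $V_{k-1} \geq 0$, which is annihilated by Kronecker's lemma using precisely \eqref{sum:vi}.
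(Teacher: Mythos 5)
Your proof is correct, and its overall architecture coincides with the paper's: part (b) is exactly the paper's argument (boundedness from \cref{prop:sum}, asymptotic regularity from \eqref{sum:I-T}, demiclosedness), and your extraction of $P_k$ from the affine dependence of $\E_{\lambda,k}$ on $\lambda$, evaluated at two parameters in the nondegenerate interval $\left( \underline{\lambda} \left( \alpha \right) , \overline{\lambda} \left( \alpha \right) \right)$, is precisely the paper's step \eqref{conv:im:Ed-lambad}; indeed your $P_k$ equals $4 p_k$ with $p_k$ as in \eqref{defi:p-k}. Where you genuinely depart from the paper is the passage from ``$\lim_k P_k$ exists'' to ``$\lim_k \lVert x_k - x_* \rVert$ exists''. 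The paper introduces the auxiliary sequence $q_k$ of \eqref{defi:q-k}, which absorbs the partial sums $\sum_{i \leq k} \left\langle x_i - x_* , \left( \Id - T \right) \left( x_{i-1} \right) \right\rangle$ (shown to be absolutely convergent via \eqref{sum:vi}--\eqref{sum:I-T}), proves that $\left( \alpha - 1 \right) q_k + k \left( q_k - q_{k-1} \right)$ converges, and then invokes the imported \cref{lem:lim-u-k}, which requires a priori boundedness of $\left( q_k \right)_{k \geq 1}$. You instead work with $\phi_k = \lVert x_k - x_* \rVert^2$ directly, exhibit the exact first-order recursion $\left( k + \alpha - 1 \right) \phi_k - k \phi_{k-1} = \tfrac{1}{2} Q_k$, and solve it with the integrating factor $\mu_k \sim k^{\alpha - 1}$, splitting the inhomogeneity into a convergent part (handled by the identity $\sum_{j=2}^{k} \mu_{j-1}/j = \left( \mu_k - 1 \right)/\left( \alpha - 1 \right)$ and Toeplitz averaging) and the sign-definite summable part $-4 s k V_{k-1}$ (annihilated by Kronecker's lemma using \eqref{sum:vi} and $V_k \geq 0$, which is cocoercivity). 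In effect you inline a self-contained proof of what \cref{lem:lim-u-k} accomplishes --- its continuous ancestor in Attouch--Peypouquet--Redont is proved by exactly this integrating-factor computation. Your route buys independence from the specialized lemma (only classical Toeplitz/Kronecker arguments), needs no boundedness of an auxiliary sequence, and identifies the limit explicitly as $\phi_k \to P_\infty / \left( 2 \left( \alpha - 1 \right) \right)$; the paper's route is shorter once \cref{lem:lim-u-k} is taken off the shelf, and folding the series into $q_k$ lets a single lemma application replace your two-step Toeplitz--Kronecker analysis.
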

\begin{proof}
	Let $x_{*} \in \sol$, $\left( \underline{\lambda} \left( \alpha \right) , \overline{\lambda} \left( \alpha \right) \right)$ be the pair defined in \eqref{trunc:fea} and $\underline{\lambda} \left( \alpha \right) < \lambda < \overline{\lambda} \left( \alpha \right)$. By the definition we have for every $k \geq 1$
	\begin{align}
		\label{defi:Ek:eq}
		\E_{\lambda,k} 
		= & \ 2 \lambda \left\langle x_{k} - x_{*} , 2k \left( x_{k} - x_{k-1} \right) + \dfrac{1}{2 \left( \alpha - 1 \right)} \left( 3 \alpha - 2 \right) sk \left( \Id - T \right) \left( x_{k-1} \right) \right\rangle \\
		& + 2 \lambda \left( \alpha - 1 \right) \left\lVert x_{k} - x_{*} \right\rVert ^{2} + \dfrac{1}{\alpha - 1} \left( \alpha - 2 \right) \lambda sk \left\langle x_{k} - x_{*} , \left( \Id - T \right) \left( x_{k-1} \right) \right\rangle \nonumber \\
		& + \dfrac{k^{2}}{2} \left\lVert 2 \left( x_{k} - x_{k-1} \right) + \dfrac{1}{2 \left( \alpha - 1 \right)} \left( 3 \alpha - 2 \right) s \left( \Id - T \right) \left( x_{k-1} \right) \right\rVert ^{2} \nonumber \\
		& + \dfrac{1}{8 \left( \alpha - 1 \right) ^{2}} \left( \alpha - 2 \right) \left( 3 \alpha - 2 \right) s^{2} k^{2} \left\lVert \left( \Id - T \right) \left( x_{k-1} \right) \right\rVert ^{2},\nonumber
	\end{align}
	which implies for every $\underline{\lambda} \left( \alpha \right) < \lambda_{1} < \lambda_{2} < \overline{\lambda} \left( \alpha \right)$ and every $k \geq 1$
	\begin{align}
		\label{conv:im:Ed-lambad}
		\E_{\lambda_{2}, k} - \E_{\lambda_{1}, k} 
		& = 4 \left( \lambda_{2} - \lambda_{1} \right) \Big( k \left\langle x_{k} - x_{*} , x_{k} - x_{k-1} + s \left( \Id - T \right) \left( x_{k-1} \right) \right\rangle \\
		& \qquad\qquad\qquad\qquad\qquad + \dfrac{1}{2} \left( \alpha - 1 \right) \left\lVert x_{k} - x_{*} \right\rVert ^{2} \Big) . \nonumber
	\end{align}
	For every $k \geq 1$ we set
	\begin{align}
		p_{k}	& := \dfrac{1}{2} \left( \alpha - 1 \right) \left\lVert x_{k} - x_{*} \right\rVert ^{2} + k \left\langle x_{k} - x_{*} , x_{k} - x_{k-1} + s \left( \Id - T \right) \left( x_{k-1} \right) \right\rangle , \label{defi:p-k} \\
		q_{k}	& := \dfrac{1}{2} \left\lVert x_{k} - x_{*} \right\rVert ^{2} + s \mysum_{i = 1}^{k} \left\langle x_{i} - x_{*} , \left( \Id - T \right) \left( x_{i-1} \right) \right\rangle. \label{defi:q-k}
	\end{align}
	We  notice that for every $k \geq 2$
	\begin{align*}
		q_{k} - q_{k-1}
		& = \left\langle x_{k} - x_{*} , x_{k} - x_{k-1} \right\rangle - \dfrac{1}{2} \left\lVert x_{k} - x_{k-1} \right\rVert ^{2} + s \left\langle x_{k} - x_{*} , \left( \Id - T \right) \left( x_{k} \right) \right\rangle ,
	\end{align*}
	and thus
	\begin{multline*}
		\left( \alpha - 1 \right) q_{k} + k \left( q_{k} - q_{k-1} \right) \\
		= p_{k} + \left( \alpha - 1 \right) s \mysum_{i = 1}^{k} \left\langle x_{i} - x_{*} , \left( \Id - T \right) \left( x_{i-1} \right) \right\rangle - \dfrac{k}{2} \left\lVert x_{k} - x_{k-1} \right\rVert ^{2} .
	\end{multline*}
	Since the discrete energy function converges for every $\underline{\lambda} \left( \alpha \right) < \lambda_{1} < \lambda_{2} < \overline{\lambda} \left( \alpha \right)$, we obtain that $\lim_{k \to + \infty} \left( \E_{\lambda_{2}, k} - \E_{\lambda_{1}, k}\right) \in \sR$ exists. This implies in view of \eqref{conv:im:Ed-lambad} and \eqref{defi:p-k} that
	\begin{equation}
		\label{conv:lim-p-k}
		\lim\limits_{k \to + \infty} p_{k} \in \sR \textrm{ exists}.
	\end{equation}	
	Moreover, thanks to the triangle inequality and the statements \eqref{sum:vi} - \eqref{sum:I-T} in \cref{prop:sum}, we have for every $k \geq 1$
	\begin{align*}
		& \mysum_{i = 1}^{k} \left\lvert \left\langle x_{i} - x_{*} , \left( \Id - T \right) \left( x_{i-1} \right) \right\rangle \right\rvert \nonumber \\
		\leq \ 	& \mysum_{i = 1}^{k} \left\lvert \left\langle x_{i} - x_{i-1} , \left( \Id - T \right) \left( x_{i-1} \right) \right\rangle \right\rvert + \mysum_{i = 1}^{k} \left\langle x_{i-1} - x_{*} , \left( \Id - T \right) \left( x_{i-1} \right) \right\rangle \nonumber \\
		\leq \ 	& \dfrac{1}{2} \mysum_{i = 1}^{k} \left\lVert x_{i} - x_{i-1} \right\rVert ^{2} + \dfrac{1}{2} \mysum_{i = 1}^{k} \left\lVert \left( \Id - T \right) \left( x_{i-1} \right) \right\rVert ^{2} + \mysum_{i = 1}^{k} \left\langle x_{i-1} - x_{*} , \left( \Id - T \right) \left( x_{i-1} \right) \right\rangle \nonumber \\
		\leq \ 	& \dfrac{1}{2} \mysum_{i \geq 1} \left\lVert x_{i} - x_{i-1} \right\rVert ^{2} + \dfrac{1}{2} \mysum_{i \geq 1} \left\lVert \left( \Id - T \right) \left( x_{i-1} \right) \right\rVert ^{2} + \mysum_{i \geq 1} \left\langle x_{i-1} - x_{*} , \left( \Id - T \right) \left( x_{i-1} \right) \right\rangle \nonumber \\
		< \ 	& + \infty .
	\end{align*}
	This means that the series $\sum_{i = 1}^{k} \left\langle x_{i} - x_{*} , \left( \Id - T \right) \left( x_{i-1} \right) \right\rangle$ is absolutely convergent.
	In addition, due to \eqref{sum:dx},
	\begin{equation*}
		\lim\limits_{k \to + \infty} k \left\lVert x_{k} - x_{k-1} \right\rVert ^{2} = 0,
	\end{equation*}
	which implies that
	\begin{equation*}
		\lim\limits_{k \to + \infty} \left (\left( \alpha - 1 \right) q_{k} + k \left( q_{k} - q_{k-1} \right) \right) \in \sR \textrm{ exists}.
	\end{equation*}
	From \cref{prop:sum}, we have that $\left( x_{k} \right)_{k \geq 0}$ is bounded, hence $\left(q_{k} \right)_{k \geq 1}$ is also bounded. This allows us to apply \cref{lem:lim-u-k} to conclude that $\lim_{k \to + \infty} q_{k} \in \sR$ also exists. Once again,  by the definition of $q_{k}$ in \eqref{defi:q-k} and the fact that the sequence $$\left( \sum_{i = 1}^{k} \left\langle x_{i-1} - x_{*} , \left( \Id - T \right) \left( x_{i-1} \right) \right\rangle \right)_{k \geq 1}$$ converges, it follows that $\lim_{k \to + \infty} \left\lVert x_{k} - x_{*} \right\rVert \in \sR$ exists. In other words, the hypothesis \labelcref{lem:Opial:dis:i} in Opial Lemma (see \cref{lem:Opial:dis}) is fulfilled.
	
	Now let $\overline{x}$ be a weak sequential cluster point of $\left( x_{k} \right) _{k \geq 0}$, meaning that there exists a subsequence $\left(x_{k_{n}} \right) _{n \geq 0}$ such that
	\begin{equation*}
		x_{k_{n}} \ \mbox{converges weakly to} \ \overline{x} \textrm{ as } n \to + \infty .
	\end{equation*}
	On the other hand, according to \eqref{sum:I-T},
	\begin{equation*}
		\left( \Id - T \right) \left( x_{k_{n}} \right) \ \mbox{converges strongly to} \ 0 \textrm{ as } n \to + \infty .
	\end{equation*}
	Due to the demiclosedness principle we conclude from here that $\overline{x} \in \sol$. This shows that the hypothesis \labelcref{lem:Opial:dis:ii} in Opial Lemma is also fulfilled, and completes the proof.
\end{proof}

The following results proves the convergence rate of the Fast KM algorithm in terms of the discrete velocity and fixed point residual.

\begin{theorem}
	\label{thm:rate}
	Let $\left( x_{k} \right) _{k \geq 0}$ be the sequence generated by \cref{algo:fKM}. Then it holds
	\begin{equation*}
		\left\lVert x_{k} - x_{k-1} \right\rVert = o \left( \dfrac{1}{k} \right) \qquad \textrm{ and } \qquad \left\lVert x_{k-1} - T \left( x_{k-1} \right) \right\rVert = o \left( \dfrac{1}{k} \right) \textrm{ as } k \to + \infty.
	\end{equation*}
\end{theorem}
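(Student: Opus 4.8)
The plan is to extract from the discrete energy function its $\lambda$-independent part, show that this part converges to $0$, and read off the two rates from its summands. First I would exploit that, by \eqref{defi:Ek:eq}, the energy is \emph{affine} in $\lambda$: one can write $\E_{\lambda,k} = \lambda A_{k} + B_{k}$, where $A_{k}$ collects the three summands carrying a factor $\lambda$ and
\[
	B_{k} := \dfrac{k^{2}}{2} \left\lVert 2 \left( x_{k} - x_{k-1} \right) + \dfrac{3 \alpha - 2}{2 \left( \alpha - 1 \right)} s \left( \Id - T \right) \left( x_{k-1} \right) \right\rVert ^{2} + \dfrac{\left( \alpha - 2 \right) \left( 3 \alpha - 2 \right)}{8 \left( \alpha - 1 \right) ^{2}} s^{2} k^{2} \left\lVert \left( \Id - T \right) \left( x_{k-1} \right) \right\rVert ^{2}
\]
collects the two $\lambda$-free summands. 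Since by \cref{prop:sum} the sequence $\left( \E_{\lambda,k} \right)_{k \geq 1}$ converges for every $\lambda$ in the nonempty interval $\left( \underline{\lambda} \left( \alpha \right) , \overline{\lambda} \left( \alpha \right) \right)$, choosing two distinct admissible values $\lambda_{1} \neq \lambda_{2}$ and solving the resulting $2 \times 2$ linear system shows that both $A_{k}$ and $B_{k}$ converge as $k \to + \infty$. Note that $B_{k} \geq 0$ because $\alpha > 2$ forces $\left( \alpha - 2 \right) \left( 3 \alpha - 2 \right) > 0$.

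The decisive step is to identify $\lim_{k \to + \infty} B_{k} = 0$. For this I would establish $\sum_{k \geq 1} \frac{1}{k} B_{k} < + \infty$: applying the elementary inequality $\left\lVert a + b \right\rVert ^{2} \leq 2 \left\lVert a \right\rVert ^{2} + 2 \left\lVert b \right\rVert ^{2}$ to the first summand, one bounds $\frac{1}{k} B_{k}$ by a constant multiple of $k \left\lVert x_{k} - x_{k-1} \right\rVert ^{2} + k \left\lVert \left( \Id - T \right) \left( x_{k-1} \right) \right\rVert ^{2}$, and after reindexing these two terms are summable by \eqref{sum:dx} and \eqref{sum:I-T} in \cref{prop:sum} (the second one using also $\sum_{k \geq 1} \left\lVert \left( \Id - T \right) \left( x_{k} \right) \right\rVert ^{2} < + \infty$, which is itself a consequence of \eqref{sum:I-T}). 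Now if the limit $L := \lim_{k \to + \infty} B_{k}$ were strictly positive, then $\frac{1}{k} B_{k} \geq \frac{L}{2k}$ for all large $k$, which contradicts summability through the divergence of the harmonic series; hence $L = 0$.

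Finally, from $B_{k} \to 0$ together with the nonnegativity of each summand I would conclude $k^{2} \left\lVert \left( \Id - T \right) \left( x_{k-1} \right) \right\rVert ^{2} \to 0$, i.e. $\left\lVert x_{k-1} - T \left( x_{k-1} \right) \right\rVert = o \left( \frac{1}{k} \right)$, and also $k \left\lVert 2 \left( x_{k} - x_{k-1} \right) + \frac{3 \alpha - 2}{2 \left( \alpha - 1 \right)} s \left( \Id - T \right) \left( x_{k-1} \right) \right\rVert \to 0$; combining the latter with the former via the triangle inequality yields $k \left\lVert x_{k} - x_{k-1} \right\rVert \to 0$, that is $\left\lVert x_{k} - x_{k-1} \right\rVert = o \left( \frac{1}{k} \right)$. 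I expect the main obstacle to be the harmonic-series argument that pins $L$ to zero: this is exactly what upgrades the $O \left( \frac{1}{\sqrt{k}} \right)$-type information already encoded in the summability results to the sharp $o \left( \frac{1}{k} \right)$ rate. Everything upstream — the affine-in-$\lambda$ decomposition and the summability bound on $\frac{1}{k} B_{k}$ — is routine once the quantity $B_{k}$ has been correctly singled out.
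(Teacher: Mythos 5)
Your proposal is correct and follows essentially the same route as the paper's proof: your $B_{k}$ is exactly the paper's auxiliary quantity $h_{k}$, your two-$\lambda$ linear-system argument reproduces the paper's use of the affine dependence of $\E_{\lambda,k}$ on $\lambda$ (the paper instead invokes the convergence of $p_{k}$ already established in the proof of \cref{thm:conv}), and your summability bound on $\frac{1}{k} B_{k}$ followed by the harmonic-series argument is precisely how the paper concludes $\lim_{k \to + \infty} h_{k} = 0$ and reads off both rates. There are no gaps.
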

\begin{proof}
	Let $x_{*} \in \sol$, $\left( \underline{\lambda} \left( \alpha \right) , \overline{\lambda} \left( \alpha \right) \right)$ be the pair defined in \eqref{trunc:fea} and $\underline{\lambda} \left( \alpha \right) < \lambda < \overline{\lambda} \left( \alpha \right)$. According to \cref{prop:sum}, the sequence $\left(\E_{\lambda,k} \right)_{k \geq 1}$ converges.
	
	From \eqref{defi:Ek:eq} and \eqref{defi:p-k} we have that for every $k \geq 1$
	\begin{align*}
		\E_{\lambda,k}
		& = 4 \lambda p_{k} + \dfrac{k^{2}}{2} \left\lVert 2 \left( x_{k} - x_{k-1} \right) + \dfrac{1}{2 \left( \alpha - 1 \right)} \left( 3 \alpha - 2 \right) s \left( \Id - T \right) \left( x_{k-1} \right) \right\rVert ^{2} \nonumber \\
		& \quad + \dfrac{1}{8 \left( \alpha - 1 \right) ^{2}} \left( \alpha - 2 \right) \left( 3 \alpha - 2 \right) s^{2} k^{2} \left\lVert \left( \Id - T \right) \left( x_{k-1} \right) \right\rVert ^{2} .
	\end{align*}
	We set for every $k \geq 1$
	\begin{align*}
		h_{k} 
		& := \dfrac{k^{2}}{2} \left( \left\lVert 2 \left( x_{k} - x_{k-1} \right) + \dfrac{1}{2 \left( \alpha - 1 \right)} \left( 3 \alpha - 2 \right) s \left( \Id - T \right) \left( x_{k-1} \right) \right\rVert ^{2} \right. \nonumber \\ 
		& \qquad\qquad\qquad \left. + \dfrac{1}{4 \left( \alpha - 1 \right) ^{2}} \left( \alpha - 2 \right) \left( 3 \alpha - 2 \right) s^{2} \left\lVert \left( \Id - T \right) \left( x_{k-1} \right) \right\rVert ^{2} \right) ,
	\end{align*}
	so that $\E_{\lambda,k} = 4 \lambda p_{k} + h_{k}$.
	Furthermore, since $\lim_{k \to + \infty} \E_{\lambda,k} \in \sR$ and $\lim_{k \to + \infty} p_{k} \in \sR$ (see also \eqref{conv:lim-p-k}), it holds
	\begin{equation*}
		\lim\limits_{k \to + \infty} h_{k} \in \sR \textrm{ exists}.
	\end{equation*}
	On the other hand, in view of \eqref{sum:dx} and \eqref{sum:I-T} in \cref{prop:sum} we have
	\begin{multline*}
		\mysum_{k \geq 1} \dfrac{1}{k} h_{k} \leq 4 \mysum_{k \geq 1} k \left\lVert x_{k} - x_{k-1} \right\rVert ^{2} \\
		+ \dfrac{1}{8 \left( \alpha - 1 \right) ^{2}} \left( 3 \alpha - 2 \right) \left( 7 \alpha - 6 \right) s^{2} \mysum_{k \geq 1} k \left\lVert \left( \Id - T \right) \left( x_{k-1} \right) \right\rVert ^{2} < + \infty .
	\end{multline*}
	Consequently, $\lim_{k \to + \infty} h_{k} = 0$, which yields
	\begin{align*}
		& \lim\limits_{k \to \infty} k \left\lVert 2 \left( x_{k} - x_{k-1} \right) + \dfrac{1}{2 \left( \alpha - 1 \right)} \left( 3 \alpha - 2 \right) s \left( \Id - T \right) \left( x_{k-1} \right) \right\rVert \\
		= & \lim\limits_{k \to \infty} k \left\lVert \left( \Id - T \right) \left( x_{k-1} \right) \right\rVert = 0 .
	\end{align*}
	This immediately implies $\lim_{k \to + \infty} k \left\lVert x_{k} - x_{k-1} \right\rVert = 0$.
\end{proof}

\begin{remark}\label{remark8}
	In \cite{Park-Ryu}, Park and Ryu established a fixed point residual lower bound of $O(\frac{1}{k})$ for various fixed point iterations designed to find a fixed point of a $1$-Lipschitz continuous operator. In the following, we will explain that this statement is not in contradiction with the convergence rate statement in \cref{thm:rate}. 
	
	According to \cite[Theorem 4.6]{Park-Ryu}, for given $K \geq 2$, $d \geq K$ and every initial point $x_0 \in \sR^d$, there exists an $1$-Lipschitz continuous operator $T : \sR^d \rightarrow \sR^d$ with $x_{*} \in \sol$ such that the inequality
	\begin{equation}\label{lowerbound}
		\|x_{K-1} - T(x_{K-1}) \| \geq \frac{2\|x_0-x_*\|}{K},
	\end{equation}
	holds for every iterates $(x_k)_{k = 0, ..., K-1}$ satisfying
	\begin{equation}\label{eqspan}
		x_k \in x_0 + \spa \{x_0-T(x_0), ..., x_{k-1} - T(x_{k-1})\}
	\end{equation}
	for $k = 1, ..., K-1$. It is evident that the sequence generated by the Fast KM algorithm with $x_1:=x_0$ fulfills \eqref{eqspan}. Consequently, there exists such an $1$-Lipschitz continuous operator $T : \sR^d \rightarrow \sR^d$ that fulfills \eqref{lowerbound} for the sequence of iterates generated by the Fast KM algorithm with $x_1:=x_0$. In contrast, according to \cref{thm:rate}, for the same operator (as it is the case for every  $1$-Lipschitz continuous operator), it holds $k  \|x_{k-1} - T(x_{k-1}) \| \rightarrow 0$ as $k \rightarrow +\infty$, which means that there exists $k_0 > K$ such that 
	\begin{equation*}
		\|x_{k-1} - T(x_{k-1}) \| < \frac{2\|x_0-x_*\|}{k} 
	\end{equation*}
	for every $k \geq k_0$.
\end{remark}

\begin{remark}\label{remark9}
	In \cite{Contreras-Cominetti}, Contreras and Cominetti presented an example of a $1$-Lipschitz continuous operator defined on a Banach space with the property that the fixed point residual of the \textit{Mann iteration}  is bounded from below by $O(\frac{1}{k})$. For given initial points $y_0$ and $x_0$, the Mann iterates are defined for every $k \geq 0$ recursively as follows
	\begin{equation*}
		x_{k+1}:= s^{k+1}_0 y_0 + \sum_{i=0}^{k} s^{k+1}_{i+1} T(x_i),
	\end{equation*}
	where $s_{i}^{k+1} \geq 0$ for every $i=0, ..., k+1$ and $\sum_{i=0}^{k+1} s_i^{k+1} =1$. 
	
	As observed in \cref{remark1}, the Fast KM algorithm is not a Mann iteration since not all the weights are nonnegative, although  they do sum up to $1$. This observation suggests that in order to obtain a fixed point residual rate of $o(\frac{1}{k})$ in Banach spaces, one might have to go beyond general Mann iterations and allow, for instance, negative weights in the iterative scheme.
\end{remark}

\section{Application to several splitting algorithms}
\label{sec:split}

In the light of the fact that fixed point methods lie at the heart of important splitting iterative schemes for monotone inclusions, we will discuss in this section how the Fast KM algorithm impacts the latter. In addition, we will review some recent acceleration approaches of splitting algorithms from the literature.

Consider the following monotone inclusion problem
\begin{equation}
	\label{intro:pb:mi}
	\mbox{Find} \ x \in \sH \ \mbox{such that} \ 0 \in A \left( x \right) + B \left( x \right) + C \left( x \right) ,
\end{equation}
where $A, B \colon \sH \to 2^{\sH}$ are  set-valued maximally monotone operators and $C : \sH \to \sH$ is a $\beta$-cocoercive operator with $\beta > 0$. 

Davis and Yin introduced in \cite{Davis-Yin} the following operator
\begin{equation}
	\label{os:T-DY}
	T_{DY} : \sH \to \sH, \quad T_{DY} := J_{\gamma A} \circ \left( 2 J_{\gamma B} - \Id - \gamma C \circ J_{\gamma B} \right) + \Id - J_{\gamma B} ,
\end{equation}
where $0 < \gamma \leq 2\beta$ and $J_{\gamma A} := \left( \Id + \gamma A \right) ^{-1}$ denotes the \emph{resolvent operator} of $\gamma A$ with constant $\gamma$. The set of zeros of $A+B+C$, denoted by $\zer \left( A+B+C \right)$, can be characterized in terms of  $T_{DY}$ by (see \cite[Lemma 2.2]{Davis-Yin})
\begin{equation*}
	\zer \left( A+B+C \right) = J_{\gamma B} \left( \Fix T_{DY} \right) .
\end{equation*}

The \KM iteration applied to $T_{DY}$ gives rise to the \emph{three-operator splitting method}
\begin{equation*}
	x_{k+1} := \left( 1 - s_{k} \right) x_{k} + s_{k} T_{DY} \left( x_{k} \right) \quad \forall k \geq 0,
\end{equation*}
where $x_0 \in \sH$ and $\left(s_{k} \right)_{k \geq 0} \subseteq \left( 0 , 2 - \frac{\gamma}{2 \beta} \right]$. The operator $T_{DY}$ is $\frac{2 \beta}{4 \beta - \gamma}$-averaged, here we use the convention $\beta := + \infty$ whenever $C \equiv 0$, in which case the operator is $\frac{1}{2}$-averaged. According to \cite{Huang-Ryu-Yin}, this constant is tight. It has been shown in \cite{Davis} that the convergence rate of three-operator splitting method is, as expected, of $O \left(\frac{1}{\sqrt{k}} \right)$.

The theoretical statements of the previous section applied to this particular setting lead to the following result.
\begin{corollary}
	\label{coro:fDY}
	Let $\alpha > 2, x_{0}, x_{1} \in \sH$, $0 < \gamma \leq 2\beta$ and $0 < s \leq 2 - \frac{\gamma}{2 \beta}$.
	For every $k \geq 1$ we set
	\begin{align*}
		x_{k+1}	:= & \left( 1 - \dfrac{s \alpha}{2 \left( k + \alpha \right)} \right) x_{k} + \dfrac{\left( 1 - s \right) k}{k + \alpha} \left( x_{k} - x_{k-1} \right) \\
		& + \dfrac{s \alpha}{2 \left( k + \alpha \right)} T_{DY} \left( x_{k} \right) + \dfrac{sk}{k + \alpha} \left( T_{DY} \left( x_{k} \right) - T_{DY} \left( x_{k-1} \right) \right). 
	\end{align*}
	Then the following statements are true:
	\begin{enumerate}
		\item 
		\label{coro:fDY:i}
		$\left( x_{k} \right)_{k \geq 0}$ converges weakly to an element $x_{*}$ in $\Fix T_{DY}$ such that $J_{\gamma B} \left( x_{*} \right)$ is a solution of \eqref{intro:pb:mi};
		
		\item 
		\label{coro:fDY:ii}
		it holds
		\begin{equation*}
			\left\lVert x_{k} - x_{k-1} \right\rVert = o \left( \dfrac{1}{k} \right) \quad \textrm{ and } \quad \left\lVert x_{k-1} - T_{DY} \left( x_{k-1} \right) \right\rVert = o \left( \dfrac{1}{k} \right) \textrm{ as } k \to + \infty.
		\end{equation*}
	\end{enumerate}	
\end{corollary}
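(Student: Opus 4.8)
The plan is to recognize \cref{coro:fDY} as a direct specialization of the general convergence results \cref{thm:conv} and \cref{thm:rate} to the choice $T := T_{DY}$. First I would observe that the recursion defining $\left(x_k\right)_{k \geq 0}$ in the statement is nothing but the Fast KM scheme \eqref{algo:scheme} with $T$ replaced by $T_{DY}$; consequently the entire analysis developed for \cref{algo:fKM} transfers verbatim, \emph{provided} the standing hypotheses on the operator and on the step size are verified in the present setting.

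To this end I would first record that, as recalled just above the corollary, $T_{DY}$ is $\theta$-averaged with $\theta = \frac{2\beta}{4\beta - \gamma}$, and that the assumption $0 < \gamma \leq 2\beta$ forces $\theta \in \left[\frac{1}{2}, 1\right] \subseteq (0,1]$, as demanded in \cref{algo:fKM}. The one compatibility condition that genuinely requires attention is the step size: \cref{algo:fKM} admits $0 < s \leq \frac{1}{\theta}$, and a one-line computation gives $\frac{1}{\theta} = \frac{4\beta - \gamma}{2\beta} = 2 - \frac{\gamma}{2\beta}$, which coincides exactly with the bound $0 < s \leq 2 - \frac{\gamma}{2\beta}$ imposed in the corollary. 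Together with $\alpha > 2$, this places all hypotheses of the Fast KM algorithm in force for $T = T_{DY}$.

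For part \labelcref{coro:fDY:i} I would then invoke \cref{thm:conv} to obtain weak convergence of $\left(x_k\right)_{k \geq 0}$ to some $x_* \in \Fix T_{DY}$, and appeal to the characterization $\zer\left(A+B+C\right) = J_{\gamma B}\left(\Fix T_{DY}\right)$ to conclude that $J_{\gamma B}\left(x_*\right) \in \zer\left(A+B+C\right)$, i.e. that $J_{\gamma B}\left(x_*\right)$ solves \eqref{intro:pb:mi}. For part \labelcref{coro:fDY:ii}, the two $o\left(\frac{1}{k}\right)$ estimates for $\left\lVert x_k - x_{k-1}\right\rVert$ and for the fixed point residual $\left\lVert x_{k-1} - T_{DY}\left(x_{k-1}\right)\right\rVert$ follow immediately from \cref{thm:rate} applied to $T = T_{DY}$.

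I do not anticipate any real difficulty in this argument: it is essentially a bookkeeping step that matches the hypotheses of the already-proved theorems to the Davis-Yin instance. The only point that deserves care is the arithmetic identity $\frac{1}{\theta} = 2 - \frac{\gamma}{2\beta}$, which certifies that the admissible step-size range in the corollary agrees precisely with the range $0 < s \leq \frac{1}{\theta}$ underlying the whole convergence analysis, along with correctly recalling the (tight) averagedness constant of $T_{DY}$ from the cited literature.
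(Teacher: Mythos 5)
Your proposal is correct and follows exactly the route the paper takes: the corollary is stated there as an immediate specialization of \cref{thm:conv} and \cref{thm:rate} to $T = T_{DY}$, using the recalled $\frac{2\beta}{4\beta-\gamma}$-averagedness of $T_{DY}$ (so that $\frac{1}{\theta} = 2 - \frac{\gamma}{2\beta}$ matches the step-size bound) together with the Davis--Yin characterization $\zer\left(A+B+C\right) = J_{\gamma B}\left(\Fix T_{DY}\right)$. Your verification of the averagedness constant, the step-size identity, and the two invocations is precisely the bookkeeping the paper leaves implicit.
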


In the following we will revisit some of the particular formulations of \eqref{intro:pb:mi} and of the corresponding underlying operator $T_{DY}$ also in order to emphasize the broad applicability of \cref{coro:fDY}.

\subsection*{Resolvent operator}
For $B \equiv C \equiv 0$, the problem \eqref{intro:pb:mi} reduces to 
\begin{equation*}
	\mbox{Find} \ x \in \sH \ \mbox{such that} \ 0 \in A \left( x \right),
\end{equation*}
and, for $\gamma >0$, 
$$T_{DY} = J_{\gamma A},$$ 
which is $\frac{1}{2}$-averaged.

The fixed point residual of the classical proximal point algorithm
\begin{equation*}
	x_{k+1} = J_{\gamma A} \left( x_{k} \right) \quad \forall k \geq 0,
\end{equation*}
is known to be in general of $O \left(\frac{1}{\sqrt{k}} \right)$, whereas Gu and Yang have shown in  \cite{Gu-Yang} that, for $\sH := \sR^{n}$,  it can be tightened to
\begin{equation*}
	\left\lVert J_{\gamma A} \left( x_{k} \right) - x_{k} \right\rVert = \begin{cases}
		O\left (\dfrac{1}{k} \right),	& \textrm{ if } n = 1 , \\
		O \left( \dfrac{1}{\sqrt{\left( 1 + \frac{1}{k} \right) ^{k} k}} \right), & \textrm{ if } n \geq 2 .
	\end{cases}
\end{equation*}

In the same setting of finite-dimensional Hilbert spaces, Kim proposed in \cite{Kim} (see also  \cite{Park-Ryu}) the following \emph{accelerated proximal point method}, which, given $y_{1} = x_{0} = x_{1} \in \sH$ and $\gamma >0$, reads: for every $k \geq 1$ set
\begin{align}
	\begin{split}
		\label{algo:APPM}
		y_{k+1} 	& := J_{\gamma A} \left( x_{k} \right) , \\
		x_{k+1}	& := y_{k+1} + \dfrac{k}{k+2} \left( y_{k+1} - y_{k} \right) - \dfrac{k}{k+2} \left( y_{k} - x_{k-1} \right) .
	\end{split}
\end{align}
Using the performance estimation approach, the the author proved that the method exhibits a convergence rate of the fixed point residual of $O\left (\frac{1}{k} \right)$. 

By comparison, the algorithm in \cref{coro:fDY} for $\gamma >0$, $0 < s \leq 2$ and $T_{DY} = J_{\gamma A}$ exhibits a convergence rate of the fixed point residual of $o\left (\frac{1}{k} \right)$. 

\subsection*{Forward-backward operator}
For $B \equiv 0$, the problem \eqref{intro:pb:mi} reduces to 
\begin{equation*}
	\mbox{Find} \ x \in \sH \ \mbox{such that} \ 0 \in A \left( x \right) + C(x),
\end{equation*}
and, for $0 < \gamma \leq 2 \beta$, 
$$T_{DY} := T_{FB} = J_{\gamma A} \circ (\Id - \gamma C),$$
which is $\frac{2 \beta}{4 \beta - \gamma}$-averaged.

The \KM iteration gives rise in this case to the classical \emph{forward-backward} algorithm, which is known to exhibit a convergence rate of the fixed point residual of $O\left (\frac{1}{\sqrt{k}} \right)$. By comparison, the algorithm in \cref{coro:fDY} for  $T_{DY} = T_{FB}$ exhibits a convergence rate of the fixed point residual of $o\left (\frac{1}{k} \right)$. 

\subsection*{\DR operator}
For $C \equiv 0$, the problem \eqref{intro:pb:mi} reduces to 
\begin{equation*}
	\mbox{Find} \ x \in \sH \ \mbox{such that} \ 0 \in A \left( x \right) + B(x),
\end{equation*}
and, for $\gamma >0$, 
$$T_{DY} := T_{DR} = J_{\gamma A} \circ (2 J_{\gamma B} - \Id)+ \Id - J_{\gamma B},$$ 
which is $\frac{1}{2}$-averaged.

The \KM iteration gives rise in this case to the classical \emph{\DR} algorithm (see \cite{Douglas-Rachford}, \cite{Lions-Mercier}), which is known to exhibit a convergence rate of the fixed point residual of $O\left (\frac{1}{\sqrt{k}} \right)$ (see \cite{He-Yuan}). By comparison, the algorithm in \cref{coro:fDY} for  $\gamma >0$, $0 < s \leq 2$ and $T_{DY} = T_{DR}$ exhibits a convergence rate of the fixed point residual of $o\left (\frac{1}{k} \right)$.

\subsection*{Recent contributions to acceleration approaches}
The idea of the Halpern iteration of considering in the iterative schemes convex combinations with an anchor point has been recently extensively exploited as it led to convergence rate improvements of numerical methods. This has been first done for fixed point iterations (\cite{Sabach-Shtern}, \cite{Lieder}), then for algorithms for solving monotone equations and minimax problems (\cite{Yoon-Ryu:21}, \cite{Lee-Kim}), and later on for variants of splitting algorithms like the forward-backward, the \DR and the three-operator splitting method (\cite{Qi-Xu,Tran-Dinh-Luo,Tran-Dinh,Yoon-Ryu}). 

Our method, however, relies on the continuous time approach from \cite{Bot-Csetnek-Nguyen} and uses the idea of Nesterov's momentum updates (\cite{Nesterov:83}).

\section{Numerical experiments}

\subsection{Proximal point type methods}\label{subsec51}

In order to illustrate the numerical performances of the Fast KM algorithm by comparison to other iterative schemes we consider first, for $n \geq 1$, the fixed point problem
$$\mbox{Find} \ x \in \sR^{2n} \ \mbox{such that} \ J_A(x) = x,$$
where $J_A : \sR^{2n} \rightarrow \sR^{2n}$ is the resolvent of the maximally monotone operator given by the matrix
\begin{equation*}
	A =  \frac{1}{M-1} \begin{pmatrix}
		\mO 	& \mI \\ - \mI 	& \mO
	\end{pmatrix} \in \sR^{2n \times 2n},
\end{equation*}
where $M$ is a positive constant, $\mI$ and $\mO$ denote the identity and the all-zeros matrix in $\sR^{n \times n}$, respectively. This operator has been used in the literature to illustrate the worse-case performance of the proximal point method and of the \BP iteration  (see \cite{Gu-Yang,Park-Ryu}). Notice that $x=0$ is the unique fixed point of $J_A$ and that $J_A$ is $\frac{1}{2}$-averaged,  therefore we take as step size $s:=2$.

\begin{figure}[!htb]
	\begin{center}
		\includegraphics[width=0.7\linewidth]{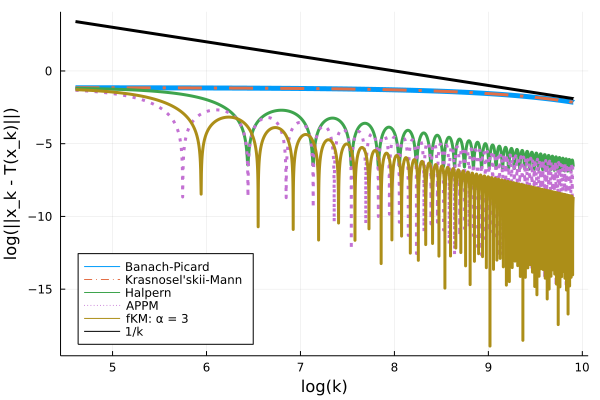}
		\caption{Comparison of the behaviour of the fixed point residual for different methods}
		\label{fig:fpall}
	\end{center}
\end{figure}

We solve the fixed point problem with the \BP iteration, which corresponds to the proximal point algorithm, the \KM iteration \eqref{algo:KM}, which corresponds to the relaxed proximal point algorithm, the Halpern iteration \eqref{algo:Halpern}, the accelerated proximal point method (APPM)  \eqref{algo:APPM}, and the Fast KM algorithm \eqref{algo:scheme} with $\alpha=3$.

\begin{figure}[!htb]
	\begin{center}
		\includegraphics[width=0.7\linewidth]{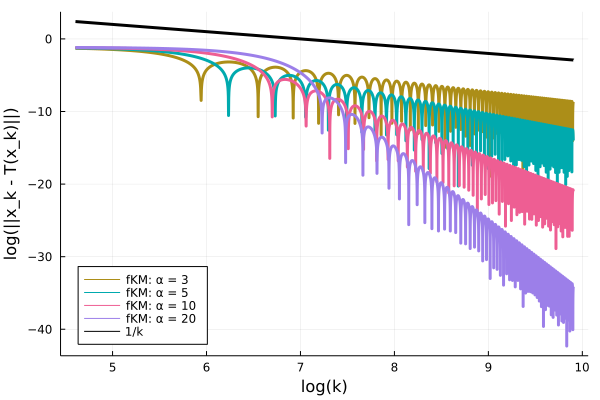}
		\caption{The parameter $\alpha$ influences the convergence behaviour of the Fast KM algorithm}
		\label{fig:fpf}
	\end{center}
\end{figure}

For all iterative methods we consider as starting $\begin{pmatrix}
	\vj_{n} \\ \vo_{n} 
\end{pmatrix} \in \sR^{2n}$, where $\vj_{n}$ and $\vo_{n}$ denote then all ones and all zeros vector in $\sR^{n}$, respectively.

In a first experiment, we run all these methods in case $n := 5000$. The values of the corresponding fixed point residuals are plotted in \cref{fig:fpall} in logarithmic scale. It is obvious that the Fast KM algorithm outperforms all other numerical algorithms.

In a second experiment, we run the Fast KM algorithm for different values of $\alpha$ in $\left\lbrace 3, 5, 10, 20 \right\rbrace$. The values of the corresponding fixed point residuals are plotted in \cref{fig:fpf} in logarithmic scale. As there is almost no difference between the methods in the first $100$ iterations, one can notice that the speed of convergence of the fixed point residual increases with increasing $\alpha$ and it is consistently faster than $o \left( 1/k \right)$. This phenomenon seems to be common for algorithms enhanced with Nesterov's momentum update (see also \cite{Bot-Csetnek-Nguyen}). However, whereas in the case of Nesterov’s acceleration algorithms for minimizing smooth and convex function the values of $\alpha$ are correlated with the speed of convergence of the objective function values, here this applies to the fixed point residual.

In addition, the plots of the fixed point residual exhibit a strong oscillatory behaviour, very similar to the behaviour of the objective function values of Nesterov’s acceleration algorithms for convex minimization. This is another evidence that Nesterov’s momentum improves the convergence behaviour of numerical algorithms beyond the  optimization setting (see also \cite{Bot-Csetnek-Nguyen}).

\begin{figure}[!htb]
	\minipage{0.48\textwidth}
	\includegraphics[width=\linewidth]{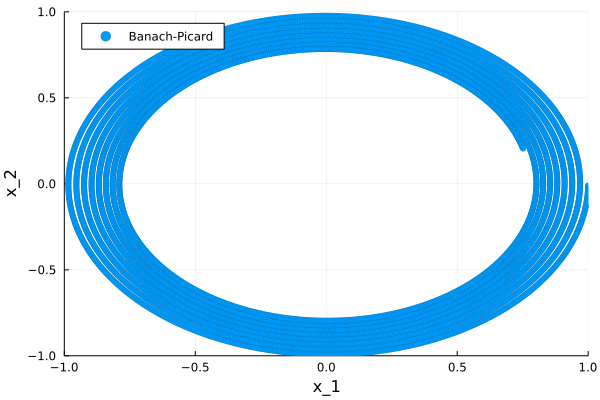}
	\caption{The trajectory of the \BP iteration/proximal point algorithm}
	\label{fig:traBP}
	\endminipage\hfill
	\minipage{0.48\textwidth}
	\includegraphics[width=\linewidth]{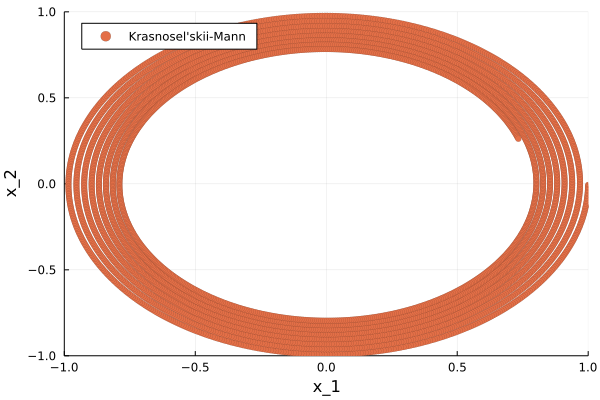}
	\caption{The trajectory of the \KM iteration/relaxed proximal point algorithm}
	\label{fig:traKM}
	\endminipage
\end{figure}

\begin{figure}[!htb]
	\minipage{0.48\textwidth}
	\includegraphics[width=\linewidth]{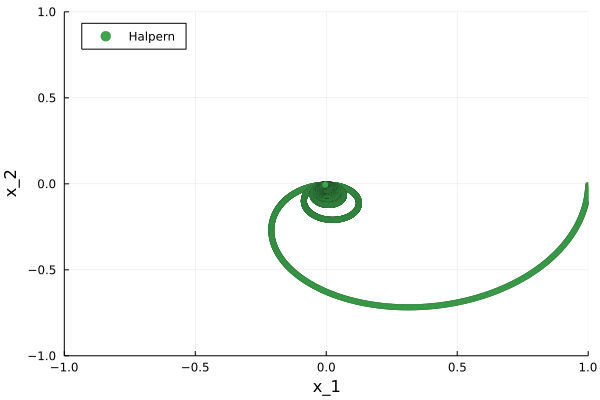}
	\caption{The trajectory of Halpern iteration}
	\label{fig:traHalpern}
	\endminipage\hfill
	\minipage{0.48\textwidth}
	\includegraphics[width=\linewidth]{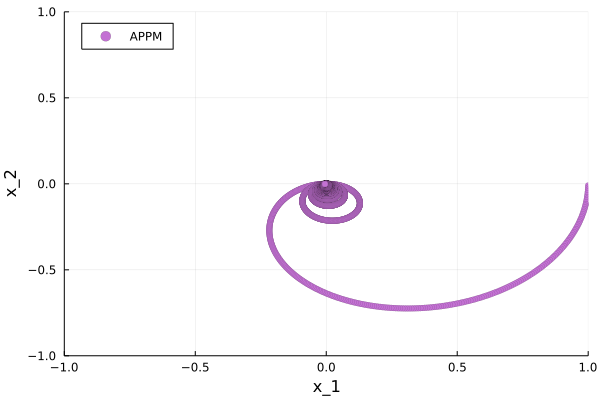}
	\caption{The trajectory of the accelerated proximal point method}
	\label{fig:traAPPM}
	\endminipage
\end{figure}

In \cref{fig:traBP} - \cref{fig:trafKM20} we plot the trajectories generated by all methods considered in the numerical experiments in case $n:=1$. From the convergence analysis we know that they all converge to the unique fixed point of the operator, however, as the plots show, after spiralling around it. It is also obvious that the spiralling effect in case of the Fast KM algorithm is less pronounced than for the other algorithms.

\begin{figure}[!htb]
	\minipage{0.48\textwidth}
	\includegraphics[width=\linewidth]{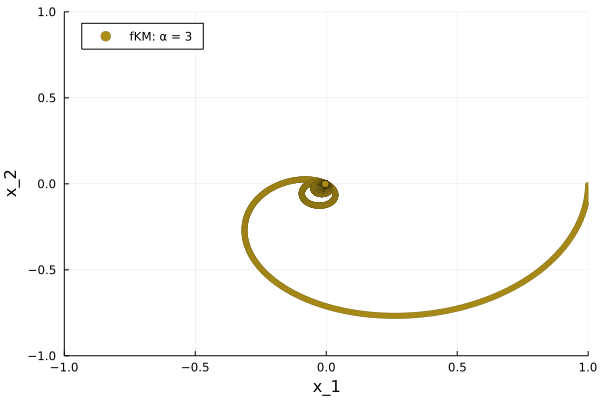}
	\caption{The trajectory of the Fast KM algorithm for $\alpha = 3$}
	\label{fig:trafKM3}
	\endminipage\hfill
	\minipage{0.48\textwidth}
	\includegraphics[width=\linewidth]{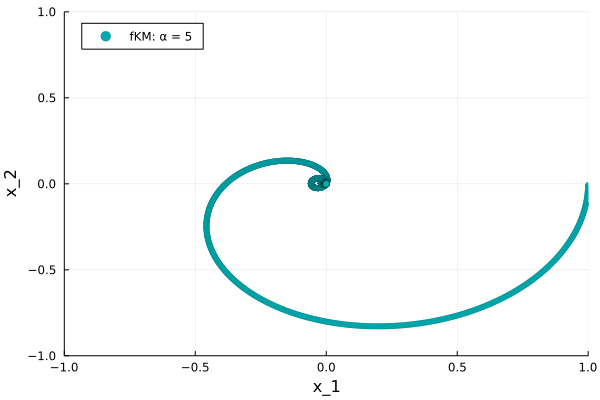}
	\caption{The trajectory of the Fast KM algorithm for $\alpha = 5$}
	\label{fig:trafKM5}
	\endminipage
\end{figure}

\begin{figure}[!htb]
	\minipage{0.48\textwidth}
	\includegraphics[width=\linewidth]{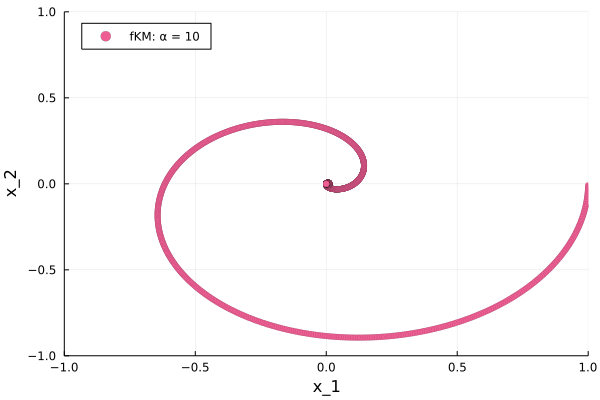}
	\caption{The trajectory of the Fast KM algorithm for $\alpha = 10$}
	\label{fig:trafKM10}
	\endminipage\hfill
	\minipage{0.48\textwidth}
	\includegraphics[width=\linewidth]{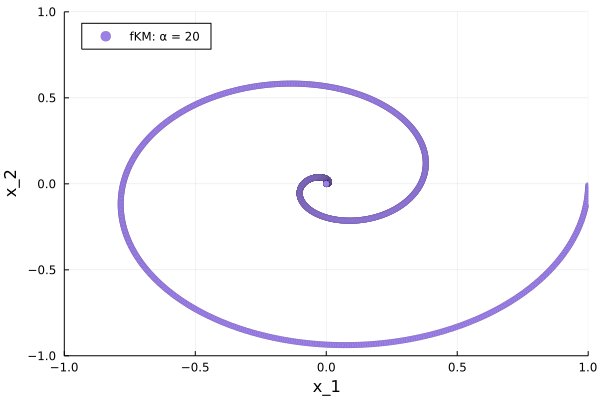}
	\caption{The trajectory of the Fast KM algorithm for $\alpha = 20$}
	\label{fig:trafKM20}
	\endminipage
\end{figure}

\subsection{\DR type methods}\label{subsec52}

For the second family of numerical experiments we consider the following feasibility problem
\begin{equation*}
	\mbox{Find } x \in \sR^{2n} \mbox{ such that } x \in \sR_{+}^{2n} \cap H_{u, \nu} ,
\end{equation*}
where $\sR_{+}^{2n}$ is the set of vectors in $\sR^{2n}$ with nonnegative entries and, for a given vector $u \in \sR^{2n}$ and real number $\nu \in \sR$,
\begin{equation*}
	H_{u, \nu} := \left\lbrace x \in \sR^{2n} \colon \left\langle x , u \right\rangle = \nu \right\rbrace .
\end{equation*}
If $\sR_{+}^{2n} \cap H_{u, \nu} \neq \emptyset$, then (see, for instance, \cite[Corollary 27.6]{Bauschke-Combettes:book})
\begin{equation*}
	\sR_{+}^{2n} \cap H_{u, \nu} = \zer \left( N_{\sR_{+}^{2n}} + N_{H_{u, \nu}} \right) \neq \emptyset ,
\end{equation*}
where $N_{D} : \sR^{2n} \rightrightarrows \sR^{2n}$ denotes the normal cone operator of a nonempty closed convex set $D \subseteq \sR^n$. The normal cone operator $N_{D}$ is a maximally monotone and its resolvent $J_{N_{D}}$ is nothing else than the projection $\proj_{D}$ onto the set $D$. 

The \DR (DR) algorithm is known as one of the most successful numerical method for solving such feasibility problems. In this concrete case it reads
$$x_{k+1}:= (1-s_k)x_k + s_kT_{DR}(x_k) \quad \forall k \geq 0,$$
where $x_0 \in {\sR^{2n}}, (s_k)_{k \geq 0} \subseteq (0,2]$ and $T_{DR}:=\proj_{\sR_{+}^{2n}} \circ(2\proj_{H_{u, \nu}} - \Id) + \Id - \proj_{H_{u, \nu}}$.

In the following we compare the performances of the DR algorithm for various choices for $\left( s_{k} \right) _{k \geq 0}  \subseteq (0,2]$ with the ones of the Halpern algorithm (\cite{Halpern}) and of the Fast KM algorithm, which make also use of the  \DR operator $T_{DR}$. For the Fast KM algorithm we consider $\alpha \in \left\lbrace 5, 10, 30, 100, 500 \right\rbrace$ and as a step size $s := 2$.

In the numerical experiments we generate for different values for the dimension $n \geq 1$ a number of $\mathtt{N_{test}}$ pairs $(u, \nu) \in \sR_{+}^{2n} \times \sR_+$, such that the intersection of $\sR_{+}^{2n}$ and $H_{u, \nu}$ is nonempty, and a number of $\mathtt{N_{init}}$ normally distributed starting points $x_{0} \in \sR^{2n}$, which we scale then by $100$. For each generated hyperplane $H_{u, \nu}$ and starting point $x_{0}$ we run several variants of the DR algorithm, the Fast KM method and the Halpern algorithm. The algorithms terminate either after $\mathtt{k_{\max}}$ iterations or once the following condition is fulfilled
\begin{equation}
	\label{intersection}
	\left\lVert \proj_{H_{u, \nu}} \left( x_{k} \right) - \proj_{\sR_{+}^{2n}} \left( \proj_{H_{u, \nu}} \left( x_{k} \right) \right) \right\rVert \leq \mathtt{Tol},
\end{equation}
where $\mathtt{Tol}$ denotes the tolerance error. This condition is motivated by the fact that for the \DR methods the so-called shadow sequence is the one that converges to a solution; in other words, \eqref{intersection} guarantees that $\proj_{H_{u, \nu}} \left( x_{k} \right)$ is close to the intersection $\sR_{+}^{2n} \cap H_{u, \nu}$. A trial fulfilling \eqref{intersection} before $\mathtt{k_{\max}}$ iterations will be counted as a successful attempt. In \cref{tab:small} and \cref{tab:large} we report the ratio of successfully solved problems and the average number of iterations the algorithms need until termination.

\cref{tab:small} shows the results for three settings determined by three choices for the triple $(n, \mathtt{N_{test}}, \mathtt{N_{init}})$ with $\mathtt{Tol} := 10^{-16}$ for the first two and $\mathtt{Tol} := 10^{-12}$ for the third one, 
and $\mathtt{k_{\max}} := 100$. For each setting we write in boldface the best values for the ratios and the average number of iterates for the DR algorithms. It is evident that the Fast KM algorithm outperforms in both criteria the best performing variants of the \DR algorithm and the Halpern algorithm already for $\alpha =30$,  and even more so for larger values of $\alpha$.

\begin{table}[!htb]
	\begin{center}
		\renewcommand{\arraystretch}{2}	
\tiny
		\begin{tabular}{l|l|c|l|c|l|c}
			\toprule
			{\bf $(n, \mathtt{N_{test}}, \mathtt{N_{init}})$} & \multicolumn{2}{|c|}{$\left( 1,10^2,10^4 \right)$} & \multicolumn{2}{|c|}{$\left( 5,10^2,10^4 \right)$} & \multicolumn{2}{|c}{$\left( 50,10^2,10^3 \right)$} \\
			\hline
			\multicolumn{1}{c|}{\textbf{method}} & \textbf{ratio} & \textbf{iterations} & \textbf{ratio} & \textbf{iterations} & \textbf{ratio} & \textbf{iterations} \\
			\midrule
			\texttt{DR} $\colon s_{k} = 1 - \frac{1}{k+2}$   				
			& $0.9939$	& $7.6181 \pm 6.26$ 	& $0.9887$	& $14.5431 \pm 12.54$ 	& $0.8456$	& $42.823 \pm 20.99$ \\
			\texttt{DR} $\colon s_{k} \equiv 1$           				
			& $0.9940$	& $\bf 4.8877 \pm 6.00$ 	& $0.9907$	& $\bf 11.6671 \pm 12.17$ 	& $0.8700$	& $39.1163 \pm 21.08$ \\
			\texttt{DR} $\colon s_{k} = 1 + \frac{1}{k+2}$   				
			& $0.9944$	& $5.9398 \pm 6.23$ 	& $0.9923$	& $12.0513 \pm 10.52$ 	& $0.8949$	& $36.6431 \pm 20.41$ \\
			\texttt{DR} $\colon s_{k} \equiv \frac{7}{5}$         	
			& $0.9976$	& $6.8541 \pm 6.61$ 	& $0.9952$	& $13.291 \pm 7.85$ 	& $0.9428$	& $\bf 33.5245 \pm 17.25$ \\
			\texttt{DR} $\colon s_{k} \equiv \frac{3}{2}$         		
			& $0.9980$	& $7.6111 \pm 6.54$ 	& $0.9957$	& $15.0055 \pm 7.06$ 	& $0.9514$	& $34.1237 \pm 15.8$ \\
			\texttt{DR} $\colon s_{k} \equiv \frac{7}{4}$         		
			& $0.9992$	& $12.1969 \pm 7.59$ 	& $0.9965$	& $27.535 \pm 5.69$ 	& $0.9644$	& $46.8346 \pm 10.99$ \\
			\texttt{DR} $\colon s_{k} = \frac{9}{5} - \frac{1}{k+2}$         	
			& $0.9992$	& $9.3479 \pm 6.96$ 	&  $\bf 0.9968$	& $23.3639 \pm 6.36$ 	& $0.9648$	& $47.6916 \pm 11.91$ \\
			\texttt{DR} $\colon s_{k} \equiv \frac{9}{5}$         	
			& $0.9994$	& $14.5185 \pm 8.64$ 	& $0.9966$	& $34.2205 \pm 6.08$ 	&  $\bf 0.9649$	&  $54.9974 \pm 10.09$ \\
			\texttt{DR} $\colon s_{k} = \frac{9}{5} + \frac{1}{k+2}$         	
			& $\bf 0.9996$	&  $23.1938 \pm 11.67$ 	& $0.9963$	& $47.9954 \pm 6.10$ 	& $0.9598$ 	& $64.4231 \pm 7.85$  \\
			\hline
			\texttt{Halpern}
			& $0.2400$	& $32.8750 \pm 22.87$ 	& $0.0000$ 	& $-//-$			& $0.0000$	& $-//-$ \\
			\hline
			\texttt{Fast KM} $\colon \alpha = 5$   									
			& $0.9690$	& $22.4536 \pm 18.26$	& $0.4973$	& $70.4548 \pm 16.42$	& $0.0000$	& $-//-$ \\
			\texttt{Fast KM} $\colon \alpha = 10$  									
			& $1.0000$	& $9.7566 \pm 6.83$ 	& $0.9996$	& $27.5816 \pm 11.78$ 	& $0.8753$	& $65.5713 \pm 14.46$ \\
			\texttt{Fast KM} $\colon \alpha = 30$  									
			& $1.0000$	& $4.9323 \pm 2.23$ 	& $1.0000$	& $10.0186 \pm 2.41$ 	& $1.0000$	& $17.6134 \pm 3.3$ \\
			\texttt{Fast KM} $\colon \alpha = 100$ 									
			& $1.0000$	& $3.5014 \pm 1.35$ 	& $1.0000$	& $6.2383 \pm 1.19$ 	& $1.0000$	& $9.5427 \pm 1.43$ \\
			\texttt{Fast KM} $\colon \alpha = 500$ 									
			& $1.0000$	& $2.6151 \pm 1.00$  	& $1.0000$	& $4.3118 \pm 0.73$ 	& $1.0000$	& $6.2944 \pm 0.75$ \\
			\bottomrule
		\end{tabular}
		\caption{The ratio of successfully solved problems and the average number of iterations for several algorithms}
		\label{tab:small}
	\end{center}
\end{table}

\cref{tab:large} shows the results obtained for the Fast KM algorithm for larger values of $n$, $\mathtt{Tol} := 10^{-8}$ and $\mathtt{k_{\max}} := 200$. It emphasizes once more that the numerical performances of our method become better when $\alpha$ takes larger values. Different from the class of DR algorithms, the growing dimension seems to less affect the number of iterations needed by Fast KM to provide a solution.

\begin{table}[!htb]
	\begin{center}
		\renewcommand{\arraystretch}{2}	
\tiny
		\begin{tabular}{l|l|c|l|c}
			\toprule
			{\bf $(n, \mathtt{N_{test}}, \mathtt{N_{init}})$} & \multicolumn{2}{|c|}{$(500; 100; 500)$} & \multicolumn{2}{|c}{$(5000,50,100)$} \\
			\hline
			\multicolumn{1}{c|}{\textbf{method}} & \textbf{ratio} & \textbf{iterations} & \textbf{ratio} & \textbf{iterations} \\
			\midrule
			\texttt{Fast KM}  $\colon \alpha = 10$ &								
			$0.6076$	& $154.5383 \pm 24.31$ 		& $0.0000$		& $-//-$ \\
			\texttt{Fast KM}  $\colon \alpha = 30$ &									
			$1.0000$ 		& $29.3096 \pm 5.14$ 		& $1.0000$		& $40.7248 \pm 3.56$ \\
			\texttt{Fast KM}  $\colon \alpha = 100$ &									
			$1.0000$ 		& $13.8564 \pm 1.77$ 		& $1.0000$		& $17.4264 \pm 1.18$ \\
			\texttt{Fast KM}  $\colon \alpha = 500$ &								
			$1.0000$ 		& $8.5773 \pm 0.94$ 		& $1.0000$		& $10.282 \pm 0.70$ \\
			\bottomrule
		\end{tabular}
		\caption{The ratio of successfully solved problems and the average number of iterations for the Fast KM algorithm}		
		\label{tab:large}
	\end{center}
\end{table}

To further illustrate the behaviour of the considered numerical methods, we plot below some generated trajectories in case $n:=1$,  and for $u = (1,5)^T$ and $\nu := 6$. The generated sequences by the different methods may converge to different solutions for the same starting point which is plotted as a black square in the figures. However, the way the iterates of the DR algorithms and the Halpern algorithm, on the one hand, an the Fast KM algorithm, on the other hand, tend to their limits through are totally different. While the iterates of the DR algorithms and the Halpern algorithm move along a curve above the hyperplane $H_{u, \nu}$, the ones generated by the Fast KM algorithm approach in a more straight manner the solution.

In the figures \cref{fig:projs2} and \cref{fig:projs2zoom} we plot the trajectory generated by the Halpern algorithm for two different starting points in order to emphasize their pronounced spiralling around the limit point. This also explains why (see also \cref{fig:projHalpern}), even if the algorithm finds a solution in less than $\mathtt{k_{\max}}$ steps, it requires more iterations than the DR algorithms and significantly more than the Fast KM algorithm.

\begin{figure}[!htb]
	\minipage{0.48\textwidth}
	\includegraphics[width=\linewidth]{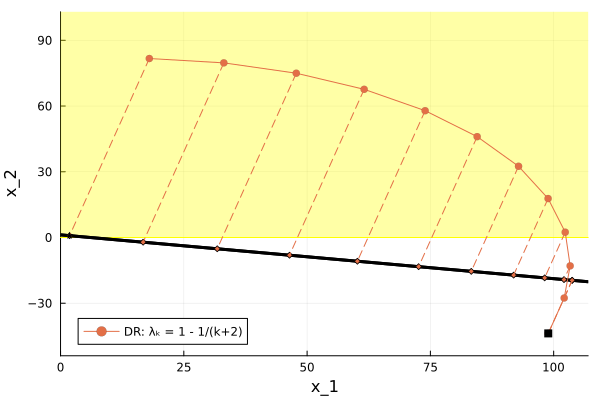}
	\caption{The trajectory of the \DR algorithm for $s_{k} := 1 - \frac{1}{k+2}$}
	\label{fig:projDR10mk}
	\endminipage\hfill
	\minipage{0.48\textwidth}
	\includegraphics[width=\linewidth]{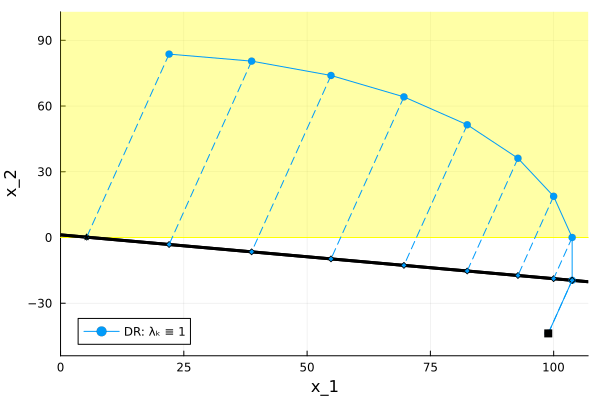}
	\caption{The trajectory of the \DR algorithm for $s_{k} := 1$}
	\label{fig:projDR10}
	\endminipage
\end{figure}

\begin{figure}[!htb]
	\minipage{0.48\textwidth}
	\includegraphics[width=\linewidth]{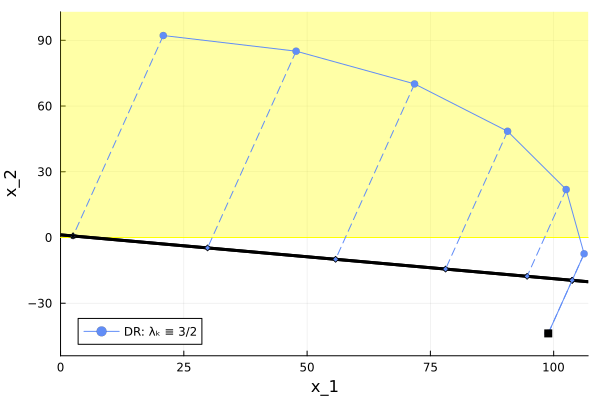}
	\caption{The trajectory of the \DR algorithm for $s_{k} := \frac{3}{2}$}
	\label{fig:projDR15}
	\endminipage\hfill
	\minipage{0.48\textwidth}
	\includegraphics[width=\linewidth]{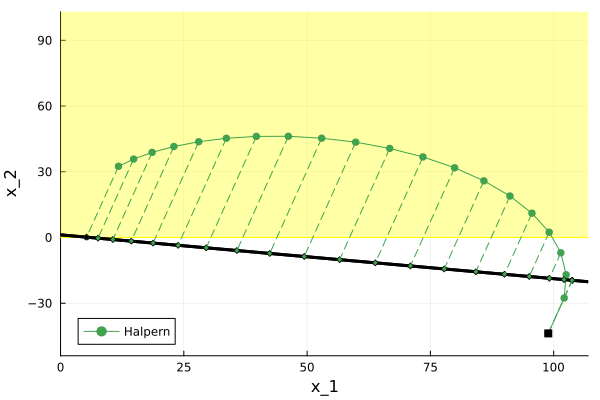}
	\caption{The trajectory of the Halpern algorithm}
	\label{fig:projHalpern}
	\endminipage
\end{figure}

\begin{figure}[!htb]
	\minipage{0.48\textwidth}
	\includegraphics[width=\linewidth]{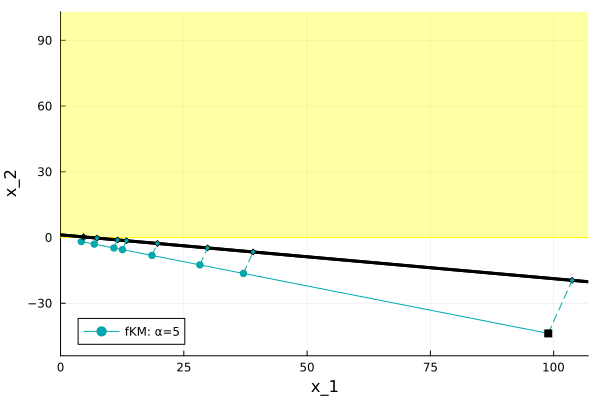}
	\caption{The trajectory of the Fast KM algorithm for $\alpha = 5$}
	\label{fig:projfKM5}
	\endminipage\hfill
	\minipage{0.48\textwidth}
	\includegraphics[width=\linewidth]{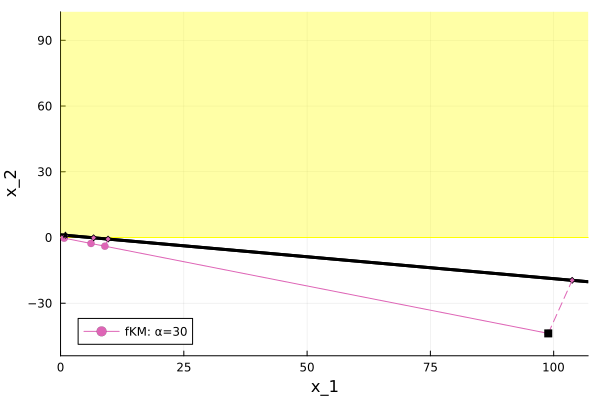}
	\caption{The trajectory of the Fast KM algorithm for $\alpha = 30$}
	\label{fig:projfKM30}
	\endminipage
\end{figure}

\begin{figure}[!htb]
	\minipage{0.48\textwidth}
	\includegraphics[width=\linewidth]{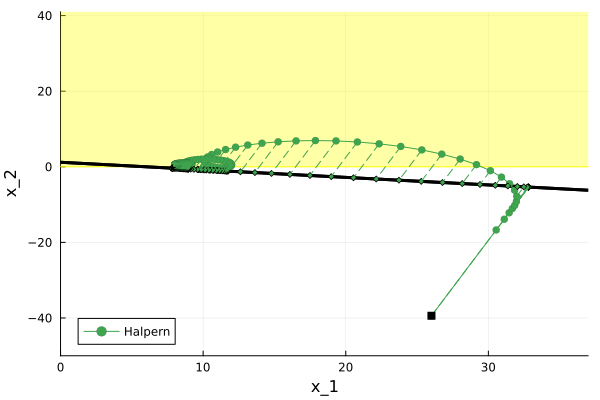}
	\caption{An instance for which the Halpern algorithm does not terminates in less than $\mathtt{k_{\max}}$ iterations}
	\label{fig:projs2}
	\endminipage\hfill
	\minipage{0.48\textwidth}
	\includegraphics[width=\linewidth]{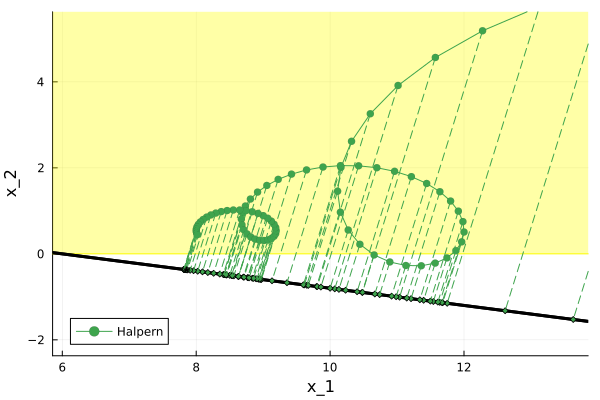}
	\caption{The spiral behavior of the Halpern algorithm}
	\label{fig:projs2zoom}
	\endminipage
\end{figure}

\appendix

\section*{Appendix}

In the appendix, we have compiled some auxiliary results and provided the proofs of the two technical lemmas  used in the convergence analysis of the Fast KM algorithm.

\section{Auxiliary results}
\label{appendix:aux}

The Opial Lemma (\cite{Opial}) is used in the proof of the convergence of the iterates.
\begin{lemma}
	\label{lem:Opial:dis}
	Let $\mathcal{S}$ be a nonempty subset of $\sH$ and $\left(x_{k} \right)_{k \geq 0}$ be a sequence in $\sH$.
	Assume that
	\begin{enumerate}
		\item
		\label{lem:Opial:dis:i}
		for every $x_{*} \in \mathcal{S}$, $\lim\limits_{k \to + \infty} \left\lVert x_{k} - x_{*} \right\rVert$ exists;
		
		\item
		\label{lem:Opial:dis:ii}
		every weak sequential cluster point of the sequence $\left(x_{k} \right)_{k \geq 0}$ as $k \to + \infty$ belongs to $\mathcal{S}$.
	\end{enumerate}
	Then $\left(x_{k} \right)_{k \geq 0}$ converges weakly to a point in $\mathcal{S}$ as $k \to + \infty$.
\end{lemma}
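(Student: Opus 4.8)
The plan is to derive weak convergence from the Hilbertian structure by showing that the bounded sequence $(x_k)_{k \geq 0}$ has exactly one weak sequential cluster point, and that this point lies in $\mathcal{S}$. First I would fix an arbitrary $x_* \in \mathcal{S}$, which is possible since $\mathcal{S} \neq \emptyset$; by hypothesis \labelcref{lem:Opial:dis:i} the sequence $(\|x_k - x_*\|)_{k \geq 0}$ converges, hence is bounded, so $(x_k)_{k \geq 0}$ is bounded. Since $\sH$ is a Hilbert space, bounded sequences admit weakly convergent subsequences, so at least one weak sequential cluster point exists, and by hypothesis \labelcref{lem:Opial:dis:ii} every such cluster point belongs to $\mathcal{S}$.

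The heart of the argument, and the step I expect to be the main obstacle, is the uniqueness of the weak sequential cluster point. Suppose $\bar{x}_1$ and $\bar{x}_2$ are two such cluster points; by \labelcref{lem:Opial:dis:ii} both lie in $\mathcal{S}$, so by \labelcref{lem:Opial:dis:i} both $\lim_{k \to +\infty} \|x_k - \bar{x}_1\|$ and $\lim_{k \to +\infty} \|x_k - \bar{x}_2\|$ exist. Expanding the squared norms and subtracting gives
\begin{equation*}
\|x_k - \bar{x}_1\|^2 - \|x_k - \bar{x}_2\|^2 = 2 \langle x_k, \bar{x}_2 - \bar{x}_1 \rangle + \|\bar{x}_1\|^2 - \|\bar{x}_2\|^2 ,
\end{equation*}
whose left-hand side converges as $k \to +\infty$; consequently the scalar sequence $(\langle x_k, \bar{x}_2 - \bar{x}_1 \rangle)_{k \geq 0}$ converges to some $\ell \in \sR$.

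To identify $\ell$ I would exploit that convergence of the full scalar sequence forces every subsequential limit to equal $\ell$. Passing to a subsequence of $(x_k)_{k \geq 0}$ converging weakly to $\bar{x}_1$ yields $\ell = \langle \bar{x}_1, \bar{x}_2 - \bar{x}_1 \rangle$, and passing to a subsequence converging weakly to $\bar{x}_2$ yields $\ell = \langle \bar{x}_2, \bar{x}_2 - \bar{x}_1 \rangle$. Subtracting these two identities gives $\langle \bar{x}_2 - \bar{x}_1, \bar{x}_2 - \bar{x}_1 \rangle = 0$, that is $\|\bar{x}_1 - \bar{x}_2\| = 0$, whence $\bar{x}_1 = \bar{x}_2$.

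Finally, I would invoke the standard fact that a bounded sequence in a Hilbert space possessing a unique weak sequential cluster point converges weakly to that point: indeed, every subsequence of $(x_k)_{k \geq 0}$ has a further subsequence converging weakly to a cluster point, which must coincide with the unique one, and this subsequence principle delivers weak convergence of the whole sequence. The limit is the unique cluster point, which belongs to $\mathcal{S}$ by \labelcref{lem:Opial:dis:ii}, completing the proof. The only subtlety to monitor is that the comparison of the two weak limits hinges entirely on convergence of the \emph{full} scalar sequence $(\langle x_k, \bar{x}_2 - \bar{x}_1 \rangle)_{k \geq 0}$, which is precisely what hypothesis \labelcref{lem:Opial:dis:i} supplies through the parallelogram-type identity above.
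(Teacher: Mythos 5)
Your proof is correct and complete. The paper itself does not prove this lemma---it is stated as an auxiliary result with a citation to Opial's original work---and your argument is exactly the classical one: boundedness of $\left(x_{k}\right)_{k \geq 0}$ from hypothesis (i), membership of all weak sequential cluster points in $\mathcal{S}$ from hypothesis (ii), uniqueness of the weak cluster point via the convergence of the scalar sequence $\left(\left\langle x_{k} , \overline{x}_{2} - \overline{x}_{1} \right\rangle\right)_{k \geq 0}$ obtained from the difference of the two squared norms, and finally the subsequence principle to upgrade to weak convergence of the whole sequence.
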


For the proof of the following result, which is the discrete counterpart of \cite[Lemma A.2]{Attouch-Peypouquet-Redont}, we refer to \cite[Lemma 21]{Bot-Csetnek-Nguyen}.

\begin{lemma}
	\label{lem:lim-u-k}
	Let $a \geq 1$ and $\left( q_{k} \right)_{k \geq 0}$ be a bounded sequence in $\sH$ such that
	\begin{equation*}
		\lim\limits_{k \to + \infty} \left( q_{k+1} + \dfrac{k}{a} \left( q_{k+1} - q_{k} \right) \right) = l \in \sH .
	\end{equation*}
	Then it holds $\lim\limits_{k \to + \infty} q_{k} = l$.
\end{lemma}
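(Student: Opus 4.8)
The plan is to treat the hypothesis as a first-order linear recurrence in $q_k$ and solve it explicitly by means of a discrete integrating factor, thereby reducing the claim to a Toeplitz-type averaging statement. First I would note that the assumption is affine in the sequence, so that after replacing $q_k$ by $q_k - l$ I may assume $l = 0$; writing $w_k := q_{k+1} + \frac{k}{a}\left( q_{k+1} - q_k \right)$, so that $w_k \to 0$, and clearing denominators yields the recurrence
\begin{equation*}
	\left( k + a \right) q_{k+1} - k q_k = a w_k \qquad \forall k \geq 1 .
\end{equation*}

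Next I would look for positive scalars $\left( \nu_k \right)_{k \geq 1}$ turning the left-hand side into a telescoping difference. Requiring $\nu_{k+1} = \nu_k \frac{k + a}{k}$ and multiplying the recurrence by $\nu_k / k$ gives $\nu_{k+1} q_{k+1} - \nu_k q_k = \frac{a \nu_k}{k} w_k$, so that summing from $1$ to $N - 1$ produces the closed form
\begin{equation*}
	q_N = \dfrac{\nu_1 q_1}{\nu_N} + \dfrac{a}{\nu_N} \mysum_{k = 1}^{N - 1} \dfrac{\nu_k}{k} w_k .
\end{equation*}
The factor $\nu_k$ is explicit up to a multiplicative constant, but all I really need are two of its properties: from $\log \nu_N = \log \nu_1 + \sum_{k = 1}^{N - 1} \log \left( 1 + a/k \right) \sim a \log N$ I obtain $\nu_N \to + \infty$ (this is where $a \geq 1 > 0$ enters), and from $\nu_{k+1} - \nu_k = \frac{a \nu_k}{k}$ I obtain the telescoping identity $\sum_{k = K}^{N - 1} \frac{a \nu_k}{k} = \nu_N - \nu_K$.

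Finally I would let $N \to + \infty$. The first term vanishes because $\nu_N \to + \infty$. For the second term, given $\varepsilon > 0$ I choose $K$ with $\left\lVert w_k \right\rVert \leq \varepsilon$ for all $k \geq K$; the head $\frac{a}{\nu_N} \sum_{k < K} \frac{\nu_k}{k} w_k$ is a fixed vector divided by $\nu_N \to + \infty$ and hence tends to $0$, while the telescoping identity bounds the tail by $\frac{\varepsilon}{\nu_N} \left( \nu_N - \nu_K \right) \leq \varepsilon$. This yields $\limsup_{N} \left\lVert q_N \right\rVert \leq \varepsilon$ for every $\varepsilon > 0$, hence $q_N \to 0$, that is $q_k \to l$. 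The main obstacle is the quantitative control of the weighted average: the entire argument rests on the nonnegativity of the weights $\frac{a \nu_k}{\nu_N k}$ together with the exact identity $\sum_{k = K}^{N - 1} \frac{a \nu_k}{k} = \nu_N - \nu_K$, which is precisely what makes the tail estimate uniform in $N$ and forces the average of a null sequence to be null. I note that the boundedness hypothesis on $\left( q_k \right)_{k \geq 0}$ is not strictly needed along this route, although it does support an alternative proof through a $\limsup$/$\liminf$ comparison.
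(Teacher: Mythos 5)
Your proof is correct, and every step checks out: the recurrence $\left( k + a \right) q_{k+1} - k q_k = a w_k$ is an exact restatement of the hypothesis, the integrating factor $\nu_{k+1} = \nu_k \frac{k+a}{k}$ does produce the telescoping identity $\nu_{k+1} q_{k+1} - \nu_k q_k = \frac{a \nu_k}{k} w_k$, and the two properties you isolate ($\nu_N \to + \infty$, which follows from the divergence of $\sum_k \log \left( 1 + a/k \right)$, and the exact identity $\sum_{k=K}^{N-1} \frac{a \nu_k}{k} = \nu_N - \nu_K$) are precisely what the head/tail estimate needs, since the weights are nonnegative and sum to at most $\nu_N$. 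Be aware, though, that the paper does not prove this lemma at all: it defers to \cite[Lemma 21]{Bot-Csetnek-Nguyen}, describing the statement as the discrete counterpart of \cite[Lemma A.2]{Attouch-Peypouquet-Redont}. Your argument is the natural discretization of the standard proof of that continuous lemma, in which the identity $\frac{d}{dt} \left( t^{a} u \left( t \right) \right) = a t^{a-1} \left( u \left( t \right) + \frac{t}{a} \dot{u} \left( t \right) \right)$ plays exactly the role of your discrete integrating factor, so it serves as a legitimate self-contained replacement for the citation. Two further points are in your favor: as you observe, the boundedness of $\left( q_k \right)_{k \geq 0}$ is never used, since convergence is forced by the averaging identity alone, and only $a > 0$ rather than $a \geq 1$ is needed; so your route in fact establishes a slightly stronger statement than the one quoted.
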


The following result is a particular instance of \cite[Lemma 5.31]{Bauschke-Combettes:book}.
\begin{lemma}
	\label{lem:quasi-Fej}
	Let $\left(a_{k} \right) _{k \geq 1}$, $\left( b_{k} \right)_{k \geq 1}$ and $\left(d_{k} \right)_{k \geq 1}$ be sequences of real numbers. Assume that $\left(a_{k} \right)_{k \geq 1}$ is bounded from below, and $\left( b_{k} \right)_{k \geq 1}$ and $\left( d_{k} \right)_{k \geq 1}$ are nonnegative  sequences such that $\sum_{k \geq 1} d_{k} < + \infty$. If
	\begin{equation*}
		a_{k+1} \leq a_{k} - b_{k} + d_{k} \quad \forall k \geq 1,
	\end{equation*}
	then the following statements are true:
	\begin{enumerate}
		\item the sequence $\left(b_{k} \right)_{k \geq 1}$ is summable, namely $\sum_{k \geq 1} b_{k} < + \infty$;
		\item the sequence $\left( a_{k} \right)_{k \geq 1}$ is convergent.
	\end{enumerate}
\end{lemma}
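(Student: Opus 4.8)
The plan is to reduce both assertions to the monotone convergence theorem by replacing $(a_{k})_{k \geq 1}$ with a corrected auxiliary sequence that absorbs the summable perturbations $d_{k}$. Since $\sum_{k \geq 1} d_{k} < + \infty$, the tail sums $t_{k} := \sum_{j \geq k} d_{j}$ are well defined, finite, nonnegative, and satisfy $t_{k} - t_{k+1} = d_{k}$ together with $t_{k} \to 0$ as $k \to + \infty$ (being the tail of a convergent series). I would therefore set $u_{k} := a_{k} + t_{k}$ for every $k \geq 1$ and analyze $(u_{k})_{k \geq 1}$ in place of $(a_{k})_{k \geq 1}$.

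First I would show that $(u_{k})_{k \geq 1}$ is nonincreasing. Substituting the recursive hypothesis $a_{k+1} \leq a_{k} - b_{k} + d_{k}$ into the definition gives $u_{k+1} = a_{k+1} + t_{k+1} \leq a_{k} - b_{k} + d_{k} + t_{k+1} = u_{k} - b_{k}$, where the last equality uses $d_{k} + t_{k+1} = t_{k}$. Because $b_{k} \geq 0$, this yields $u_{k+1} \leq u_{k}$. Moreover $u_{k} \geq a_{k} \geq \inf_{j \geq 1} a_{j} > - \infty$, since $t_{k} \geq 0$ and $(a_{k})_{k \geq 1}$ is bounded from below. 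Hence $(u_{k})_{k \geq 1}$ is nonincreasing and bounded from below, so by the monotone convergence theorem it converges to some $u_{\infty} \in \sR$.

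For the summability claim I would telescope the estimate $b_{k} \leq u_{k} - u_{k+1}$: summing over $k = 1, \dots, N$ gives $\sum_{k=1}^{N} b_{k} \leq u_{1} - u_{N+1} \leq u_{1} - u_{\infty} < + \infty$, and letting $N \to + \infty$ establishes $\sum_{k \geq 1} b_{k} < + \infty$, which is statement (i). For the convergence claim (ii), I would simply rewrite $a_{k} = u_{k} - t_{k}$ and pass to the limit: since $u_{k} \to u_{\infty}$ and $t_{k} \to 0$, it follows that $a_{k} \to u_{\infty}$, so $(a_{k})_{k \geq 1}$ converges.

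Since the entire argument rests on the monotone convergence theorem applied to the corrected sequence, there is no genuine obstacle once the right auxiliary quantity is identified; indeed, the only content of the lemma is the observation that adding the tail $t_{k}$ turns the quasi-monotone inequality into a genuinely monotone one. The single point requiring a little care is the bookkeeping with the tails $t_{k}$ — namely that they are finite (which is precisely the hypothesis $\sum_{k \geq 1} d_{k} < + \infty$) and that $t_{k} \to 0$ — but both facts are immediate consequences of the convergence of $\sum_{k \geq 1} d_{k}$.
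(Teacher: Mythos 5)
Your proof is correct: the tail-sum correction $u_{k} := a_{k} + t_{k}$ with $t_{k} := \sum_{j \geq k} d_{j}$ does turn the perturbed descent inequality into genuine monotonicity, the bound $u_{k} \geq a_{k} \geq \inf_{j} a_{j}$ is valid because $t_{k} \geq 0$, the telescoping argument for $\sum_{k \geq 1} b_{k} < +\infty$ is sound, and the convergence of $(a_{k})_{k \geq 1}$ follows correctly from $a_{k} = u_{k} - t_{k}$ with $t_{k} \to 0$. It is worth pointing out that the paper itself does not prove this lemma at all: it is stated as a particular instance of \cite[Lemma 5.31]{Bauschke-Combettes:book}, a more general quasi-Fej\'{e}r-type result (which also accommodates multiplicative perturbations of the form $(1+\varepsilon_{k}) a_{k}$). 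So your contribution is a self-contained, elementary derivation of exactly the special case the paper needs, and your tail-correction device is essentially the standard mechanism hidden inside the textbook proof. What the citation buys the paper is brevity and generality; what your argument buys is transparency and independence from the reference --- either is acceptable, and there is no gap in what you wrote.
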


The following elementary result is used several times in the paper.
\begin{lemma}
	\label{lem:quad}
	Let $a, b, c \in \sR$ be such that $a < 0$ and $b^{2} - ac \leq 0$.
	Then it holds
	\begin{equation*}
		a \left\lVert x \right\rVert ^{2} + 2b \left\langle x , y \right\rangle + c \left\lVert y \right\rVert ^{2} \leq 0 \quad \forall x, y \in \sH .
	\end{equation*}
\end{lemma}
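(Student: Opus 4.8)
The plan is to reduce the Hilbert-space inequality to a scalar quadratic inequality in the two nonnegative quantities $\|x\|$ and $\|y\|$, and then to exploit the sign conditions on $a$, $b$, $c$ by completing the square. First I would use the Cauchy--Schwarz inequality to bound the cross term from above: since $\langle x, y\rangle \leq |\langle x,y\rangle| \leq \|x\|\,\|y\|$, it follows that
\[
a \|x\|^2 + 2b\langle x,y\rangle + c\|y\|^2 \leq a\|x\|^2 + 2|b|\,\|x\|\,\|y\| + c\|y\|^2,
\]
so it suffices to prove that the right-hand side is nonpositive. Writing $u := \|x\| \geq 0$ and $v := \|y\| \geq 0$, the task becomes the purely scalar one of showing $a u^2 + 2|b| uv + c v^2 \leq 0$ for all $u,v \geq 0$.

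The key step is then completing the square. Since $a<0$ one may write
\[
a u^2 + 2|b| uv + c v^2 = a\left(u + \tfrac{|b|}{a} v\right)^2 + \frac{ac - b^2}{a}\, v^2 .
\]
The first summand is nonpositive because $a<0$, and the second is nonpositive because the hypothesis $b^2 - ac \leq 0$ gives $ac - b^2 \geq 0$, while $a<0$ forces the quotient $\tfrac{ac-b^2}{a} \leq 0$. Hence the right-hand side is $\leq 0$, and tracing the estimates back yields the claim. Equivalently, I would observe that the hypotheses say exactly that the symmetric matrix $\left(\begin{smallmatrix} a & |b| \\ |b| & c\end{smallmatrix}\right)$ has nonpositive diagonal and nonnegative determinant $ac-b^2$, hence is negative semidefinite, so its associated quadratic form is nonpositive at $(u,v)$.

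I do not anticipate a genuine obstacle here, since the result is elementary; the only point requiring a little care is the sign bookkeeping on the cross term. One must bound $2b\langle x,y\rangle$ by $2|b|\,\|x\|\,\|y\|$ regardless of the signs of $b$ and of $\langle x,y\rangle$, so that the reduction to nonnegative $u,v$ is legitimate and the completed-square identity applies directly.
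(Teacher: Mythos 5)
Your proof is correct. Note, however, that the paper offers no proof of this lemma at all: it is stated in the appendix as an ``elementary result'' and used as a black box, so there is no argument of the authors to compare against --- your verification is exactly what is implicitly being relied upon. One simplification worth pointing out: the initial Cauchy--Schwarz reduction to the scalars $u=\left\lVert x \right\rVert$, $v=\left\lVert y \right\rVert$ is unnecessary, because the completion of squares works directly in $\sH$. Since $a \neq 0$,
\[
a \left\lVert x \right\rVert^{2} + 2b \left\langle x , y \right\rangle + c \left\lVert y \right\rVert^{2}
= a \left\lVert x + \tfrac{b}{a} y \right\rVert^{2} + \frac{ac - b^{2}}{a} \left\lVert y \right\rVert^{2},
\]
and both summands are nonpositive under the hypotheses $a<0$ and $b^{2}-ac \leq 0$; this removes the sign bookkeeping for the cross term entirely. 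A small caveat on your closing matrix remark: the hypotheses do not literally ``say'' that the diagonal of $\bigl(\begin{smallmatrix} a & |b| \\ |b| & c \end{smallmatrix}\bigr)$ is nonpositive --- the nonpositivity of $c$ is a consequence (from $ac \geq b^{2} \geq 0$ and $a<0$ one gets $c \leq 0$), not an assumption --- but this does not affect the validity of your argument.
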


\pagebreak

\section{Proofs of the technical lemmas used in the analysis of the Fast KM algorithm}
\label{appendix:proof}

In this subsection we provide the proofs of \cref{lem:dec} and \cref{lem:trunc}.

\begin{proof}[Proof of \cref{lem:dec}]
	\labelcref{lem:dec:dif} Let $0 \leq \lambda \leq \alpha-1$. First we will show that for every $k \geq 1$ the following identity holds
	\begin{align}
		\label{dec:pre} 
		& \left( \E_{\lambda,k+1} + \dfrac{1}{4 \left( \alpha - 1 \right)} \left( \alpha - 2 \right) \alpha s^{2} \left( k+1 \right) \left\lVert \left( \Id - T \right) \left( x_{k} \right) \right\rVert ^{2} \right) \\
		& \quad \ \ - \left( \E_{\lambda,k} + \dfrac{1}{4 \left( \alpha - 1 \right)} \left( \alpha - 2 \right) \alpha s^{2} k \left\lVert \left( \Id - T \right) \left( x_{k-1} \right) \right\rVert ^{2} \right) \nonumber \\
		= & \ 2 \left( 2 - \alpha \right) \lambda s \left\langle x_{k+1} - x_{*} , \left( \Id - T \right) \left( x_{k} \right) \right\rangle 
		+ 2 \left( \lambda + 1 - \alpha \right) \left( 2k + \alpha + 1 \right) \left\lVert x_{k+1} - x_{k} \right\rVert ^{2} \nonumber \\
		& + \dfrac{1}{4 \left( \alpha - 1 \right)} \left( 2 - \alpha \right) s^{2} \left( 2 \left( 3 \alpha - 2 \right) k + 2 \alpha ^{2} + \alpha - 2 \right) \left\lVert \left( \Id - T \right) \left( x_{k} \right) \right\rVert ^{2} \nonumber \\
		& + \dfrac{1}{\alpha - 1} \Bigl( 4 \left( \alpha - 1 \right) \left( \lambda + 1 - \alpha \right) + \alpha \left( 2 - \alpha \right) \Bigr) sk \left\langle x_{k+1} - x_{k} , \left( \Id - T \right) \left( x_{k} \right) \right\rangle \nonumber \\
		& + \dfrac{1}{\alpha - 1} \left( 2 \alpha \left( \alpha - 1 \right) \left( \lambda + 1 - \alpha \right) + \alpha - 2 \left( \alpha - 1 \right) ^{2} \right) s \left\langle x_{k+1} - x_{k} , \left( \Id - T \right) \left( x_{k} \right) \right\rangle \nonumber \\
		& + \dfrac{1}{\alpha - 1} \left( 2 - \alpha \right) s \left( k + \alpha \right) k \left\langle x_{k+1} - x_{k} , \left( \Id - T \right) \left( x_{k} \right) - \left( \Id - T \right) \left( x_{k-1} \right) \right\rangle \nonumber \\
		& + \dfrac{1}{4 \left( \alpha - 1 \right)} \left( 2 - \alpha \right) s^{2} k \bigl( 2k + \alpha \bigr) \left\lVert \left( \Id - T \right) \left( x_{k} \right) - \left( \Id - T \right) \left( x_{k-1} \right) \right\rVert ^{2}. \nonumber
	\end{align}
	
	For brevity we denote for every $k \geq 0$
	\begin{align}
		\label{defi:u-k-lambda}
		u_{\lambda,k+1} & := 2 \lambda \left( x_{k+1} - x_{*} \right) + 2 \left( k + 1 \right) \left( x_{k+1} - x_{k} \right) \\
		& \qquad + \dfrac{1}{2 \left( \alpha - 1 \right)} \left( 3 \alpha - 2 \right) s \left( k + 1 \right) \left( \Id - T \right) \left( x_{k} \right), \nonumber 
	\end{align}
	which means that for every $k \geq 1$ it holds
	\begin{equation}
		\label{defi:u-k-lambda:pre}
		u_{\lambda,k} = 2 \lambda \left( x_{k} - x_{*} \right) + 2 k \left( x_{k} - x_{k-1} \right) + \dfrac{1}{2 \left( \alpha - 1 \right)} \left( 3 \alpha - 2 \right) sk \left( \Id - T \right) \left( x_{k-1} \right) .
	\end{equation}
	
	Subtracting \eqref{defi:u-k-lambda:pre} from \eqref{defi:u-k-lambda} and then using \eqref{dis:d-u} we obtain for every $k \geq 1$
	\begin{align}
		&  \ u_{\lambda,k+1} - u_{\lambda,k} \label{defi:u-k-lambda:dif} \\
		= & \ 2 \left( \lambda + 1 - \alpha \right) \left( x_{k+1} - x_{k} \right) + 2 \left( k + \alpha \right) \left( x_{k+1} - x_{k} \right) - 2k \left( x_{k} - x_{k-1} \right) \nonumber \\
		& + \dfrac{1}{2 \left( \alpha - 1 \right)} \left( 3 \alpha - 2 \right) s \left( \Id - T \right) \left( x_{k} \right) \nonumber \\
		& + \dfrac{1}{2 \left( \alpha - 1 \right)} \left( 3 \alpha - 2 \right) sk \Bigl( \left( \Id - T \right) \left( x_{k} \right) - \left( \Id - T \right) \left( x_{k-1} \right) \Bigr) \nonumber \\
		= & \ 2 \left( \lambda + 1 - \alpha \right) \left( x_{k+1} - x_{k} \right) + \dfrac{1}{2 \left( \alpha - 1 \right)} \left( \alpha - 2 \left( \alpha - 1 \right) ^{2} \right) s \left( \Id - T \right) \left( x_{k} \right) \nonumber \\
		& + \dfrac{1}{2 \left( \alpha - 1 \right)} \left( 2 - \alpha \right) sk \Bigl( \left( \Id - T \right) \left( x_{k} \right) - \left( \Id - T \right) \left( x_{k-1} \right) \Bigr) . \nonumber
	\end{align}
	In the following we want to use the identity
	\begin{equation}
		\label{dec:dif:u-lambda:pre}
		\dfrac{1}{2} \left( \left\lVert u_{\lambda,k+1} \right\rVert ^{2} - \left\lVert u_{\lambda,k} \right\rVert ^{2} \right) = \left\langle u_{\lambda,k+1} , u_{\lambda,k+1} - u_{\lambda,k} \right\rangle - \dfrac{1}{2} \left\lVert u_{\lambda,k+1} - u_{\lambda,k} \right\rVert ^{2} \quad \forall k \geq 1.
	\end{equation}
	Thanks to the relations \eqref{defi:u-k-lambda} and \eqref{defi:u-k-lambda:dif},  we derive for every $k \geq 1$ that
	\begin{align}
		\label{dec:dif:u-lambda:inn}
		& \left\langle u_{\lambda,k+1} , u_{\lambda,k+1} - u_{\lambda,k} \right\rangle \\
		= & \ 4 \lambda \left( \lambda + 1 - \alpha \right) \left\langle x_{k+1} - x_{*} , x_{k+1} - x_{k} \right\rangle \nonumber \\
		& + \dfrac{1}{\alpha - 1} \left( \alpha - 2 \left( \alpha - 1 \right) ^{2} \right) \lambda s \left\langle x_{k+1} - x_{*} , \left( \Id - T \right) \left( x_{k} \right) \right\rangle \nonumber \\
		& + \dfrac{1}{\alpha - 1} \left( 2 - \alpha \right) \lambda sk \left\langle x_{k+1} - x_{*} , \left( \Id - T \right) \left( x_{k} \right) - \left( \Id - T \right) \left( x_{k-1} \right) \right\rangle \nonumber \\
		& + 4 \left( \lambda + 1 - \alpha \right) \left( k + 1 \right) \left\lVert x_{k+1} - x_{k} \right\rVert ^{2} \nonumber \\
		& + \dfrac{1}{\alpha - 1} \Bigl( \alpha - 2 \left( \alpha - 1 \right) ^{2} + \left( 3 \alpha - 2 \right) \left( \lambda + 1 - \alpha \right) \Bigr) s \left( k + 1 \right) \left\langle x_{k+1} - x_{k} , \left( \Id - T \right) \left( x_{k} \right) \right\rangle \nonumber \\
		& + \dfrac{1}{\alpha - 1} \left( 2 - \alpha \right) s \left( k + 1 \right) k \left\langle x_{k+1} - x_{k} , \left( \Id - T \right) \left( x_{k} \right) - \left( \Id - T \right) \left( x_{k-1} \right) \right\rangle \nonumber \\
		& + \dfrac{1}{4 \left( \alpha - 1 \right) ^{2}} \left( 3 \alpha - 2 \right) \left( \alpha - 2 \left( \alpha - 1 \right) ^{2} \right) s^{2} \left( k + 1 \right) \left\lVert \left( \Id - T \right) \left( x_{k} \right) \right\rVert ^{2} \nonumber \\		
		& + \dfrac{1}{4 \left( \alpha - 1 \right) ^{2}} \left( 3 \alpha - 2 \right) \left( 2 - \alpha \right) s^{2} \left( k + 1 \right) k \left\langle \left( \Id - T \right) \left( x_{k} \right) , \right. \nonumber \\
		& \hspace{7.5cm} \left. \left( \Id - T \right) \left( x_{k} \right) - \left( \Id - T \right) \left( x_{k-1} \right) \right\rangle , \nonumber
	\end{align}
	and
	\begin{align}
		\label{dec:dif:u-lambda:norm}
		& - \dfrac{1}{2} \left\lVert u_{\lambda,k+1} - u_{\lambda,k} \right\rVert ^{2} \\
		= 	& - 2 \left( \lambda + 1 - \alpha \right) ^{2} \left\lVert x_{k+1} - x_{k} \right\rVert ^{2}
		- \dfrac{1}{8 \left( \alpha - 1 \right) ^{2}} \left( \alpha - 2 \left( \alpha - 1 \right) ^{2} \right) ^{2} s^{2} \left\lVert \left( \Id - T \right) \left( x_{k} \right) \right\rVert ^{2} \nonumber \\
		& - \dfrac{1}{8 \left( \alpha - 1 \right) ^{2}} \left( 2 - \alpha \right) ^{2} s^{2} k^{2} \left\lVert \left( \Id - T \right) \left( x_{k} \right) - \left( \Id - T \right) \left( x_{k-1} \right) \right\rVert ^{2} \nonumber \\
		& - \dfrac{1}{\alpha - 1} \left( \alpha - 2 \left( \alpha - 1 \right) ^{2} \right) \left( \lambda + 1 - \alpha \right) s \left\langle x_{k+1} - x_{k} , \left( \Id - T \right) \left( x_{k} \right) \right\rangle \nonumber \\
		& - \dfrac{1}{\alpha - 1} \left( 2 - \alpha \right) \left( \lambda + 1 - \alpha \right) sk \left\langle x_{k+1} - x_{k} , \left( \Id - T \right) \left( x_{k} \right) - \left( \Id - T \right) \left( x_{k-1} \right) \right\rangle \nonumber \\
		& - \dfrac{1}{4 \left( \alpha - 1 \right) ^{2}} \left( \alpha - 2 \left( \alpha - 1 \right) ^{2} \right) \left( 2 - \alpha \right) s^{2} k \left\langle \left( \Id - T \right) \left( x_{k} \right) , \right. \nonumber \\
		& \hspace{6cm} \left. \left( \Id - T \right) \left( x_{k} \right) - \left( \Id - T \right) \left( x_{k-1} \right) \right\rangle . \nonumber
	\end{align}
	A direct computation shows that for every $k \geq 0$
	\begin{align*}
		& \left( \left( 3 \alpha - 2 \right) \left( \lambda + 1 - \alpha \right) + \alpha - 2 \left( \alpha - 1 \right) ^{2} \right) \left( k + 1 \right) - \left( \lambda + 1 - \alpha \right) \left( \alpha - 2 \left( \alpha - 1 \right) ^{2} \right) \nonumber \\
		= 	& \left( \left( 3 \alpha - 2 \right) \left( \lambda + 1 - \alpha \right) + \alpha - 2 \left( \alpha - 1 \right) ^{2} \right) k  + 2 \alpha \left( \alpha - 1 \right) \left( \lambda + 1 - \alpha \right) + \alpha - 2 \left( \alpha - 1 \right) ^{2} \nonumber \\
		= 	& \left( \left( 3 \alpha - 2 \right) \left( \lambda + 1 - \alpha \right) + \alpha - 2 \left( \alpha - 1 \right) ^{2} - \left( 2 - \alpha \right) \lambda \right) k + \left( 2 - \alpha \right) \lambda k \nonumber \\
		& + 2 \alpha \left( \alpha - 1 \right) \left( \lambda + 1 - \alpha \right) + \alpha - 2 \left( \alpha - 1 \right) ^{2} \nonumber \\
		= 	& \Bigl( 4 \left( \alpha - 1 \right) \left( \lambda + 1 - \alpha \right) + \alpha \left( 2 - \alpha \right) \Bigr) k + \left( 2 - \alpha \right) \lambda k \nonumber \\
		& + 2 \alpha \left( \alpha - 1 \right) \left( \lambda + 1 - \alpha \right) + \alpha - 2 \left( \alpha - 1 \right) ^{2} .
	\end{align*}
	Therefore, by plugging \eqref{dec:dif:u-lambda:inn} and \eqref{dec:dif:u-lambda:norm} into \eqref{dec:dif:u-lambda:pre}, we get for every $k \geq 1$
	\vspace*{-8pt}
	\begin{align}
		\label{dec:dif:u-lambda}
		& \quad \dfrac{1}{2} \left( \left\lVert u_{\lambda,k+1} \right\rVert ^{2} - \left\lVert u_{\lambda,k} \right\rVert ^{2} \right) \\
		& = \ 4 \lambda \left( \lambda + 1 - \alpha \right) \left\langle x_{k+1} - x_{*} , x_{k+1} - x_{k} \right\rangle \nonumber \\
		& + \dfrac{1}{\alpha - 1} \left( \alpha - 2 \left( \alpha - 1 \right) ^{2} \right) \lambda s \left\langle x_{k+1} - x_{*} , \left( \Id - T \right) \left( x_{k} \right) \right\rangle \nonumber \\
		& + \dfrac{1}{\alpha - 1} \left( 2 - \alpha \right) \lambda sk \left\langle x_{k+1} - x_{*} , \left( \Id - T \right) \left( x_{k} \right) - \left( \Id - T \right) \left( x_{k-1} \right) \right\rangle \nonumber \\
		& + 2 \left( \lambda + 1 - \alpha \right) \left( 2k + \alpha + 1 - \lambda \right) \left\lVert x_{k+1} - x_{k} \right\rVert ^{2} \nonumber \\
		& + \dfrac{1}{8 \left( \alpha - 1 \right) ^{2}} \left( \alpha - 2 \left( \alpha - 1 \right) ^{2} \right) s^{2} \left( 2 \left( 3 \alpha - 2 \right) k + 2 \alpha ^{2} + \alpha - 2 \right) \left\lVert \left( \Id - T \right) \left( x_{k} \right) \right\rVert ^{2} \nonumber \\
		& + \dfrac{1}{\alpha - 1} \Bigl( 4 \left( \alpha - 1 \right) \left( \lambda + 1 - \alpha \right) + \alpha \left( 2 - \alpha \right) + \left( 2 - \alpha \right) \lambda \Bigr) sk \left\langle x_{k+1} - x_{k} , \left( \Id - T \right) \left( x_{k} \right) \right\rangle \nonumber \\
		& + \dfrac{1}{\alpha - 1} \left( 2 \alpha \left( \alpha - 1 \right) \left( \lambda + 1 - \alpha \right) + \alpha - 2 \left( \alpha - 1 \right) ^{2} \right) s \left\langle x_{k+1} - x_{k} , \left( \Id - T \right) \left( x_{k} \right) \right\rangle \nonumber \\
		& + \dfrac{1}{\alpha - 1} \left( 2 - \alpha \right) s \left( k + \alpha - \lambda \right) k \left\langle x_{k+1} - x_{k} , \left( \Id - T \right) \left( x_{k} \right) - \left( \Id - T \right) \left( x_{k-1} \right) \right\rangle \nonumber \\
		& + \dfrac{1}{4 \left( \alpha - 1 \right) ^{2}} \left( 2 - \alpha \right) s^{2} \left( \left( 3 \alpha - 2 \right) k + 2 \left( \alpha - 1 \right) \alpha \right) k \left\langle \left( \Id - T \right) \left( x_{k} \right) , \right. \nonumber \\
		& \hspace{6cm} \left. \left( \Id - T \right) \left( x_{k} \right) - \left( \Id - T \right) \left( x_{k-1} \right) \right\rangle \nonumber \\
		& - \dfrac{1}{8 \left( \alpha - 1 \right) ^{2}} \left( 2 - \alpha \right) ^{2} s^{2} k^{2} \left\lVert \left( \Id - T \right) \left( x_{k} \right) - \left( \Id - T \right) \left( x_{k-1} \right) \right\rVert ^{2} . \nonumber
	\end{align}
	
	Notice that by the definition of the energy function we have for every $k \geq 1$ 
	\vspace*{-5pt}
	\begin{align}\label{dec:energy+}
		& \ \E_{\lambda,k}  + \dfrac{1}{4 \left( \alpha - 1 \right)} \left( \alpha - 2 \right) \alpha s^{2} k \left\lVert \left( \Id - T \right) \left( x_{k-1} \right) \right\rVert ^{2}  \\
		= &  \ \dfrac{1}{2} \left\lVert u_{\lambda,k} \right\rVert ^{2} + 2 \lambda \left( \alpha - 1 - \lambda \right) \left\lVert x_{k} - x_{*} \right\rVert ^{2} \nonumber \\
		& + \dfrac{1}{\alpha - 1} \left( \alpha - 2 \right) \lambda sk \left\langle x_{k} - x_{*} , \left( \Id - T \right) \left( x_{k-1} \right) \right\rangle \nonumber \\
		& + \dfrac{1}{8 \left( \alpha - 1 \right)^2}(\alpha-2) s^2 k \left(\left( 3 \alpha - 2 \right) k + 2(\alpha-1) \alpha \right) \left\lVert \left( \Id - T \right) \left( x_{k-1} \right) \right\rVert ^{2}. \nonumber
	\end{align}
	Later we will subtract the above identity at consecutive indices and to this end we will make use of the following identities which hold for every $k \geq 1$:
	\begin{align}
		& \left\lVert x_{k+1} - x_{*} \right\rVert ^{2} - \left\lVert x_{k} - x_{*} \right\rVert ^{2} 
		=  2 \left\langle x_{k+1} - x_{*} , x_{k+1} - x_{k} \right\rangle - \left\lVert x_{k+1} - x_{k} \right\rVert ^{2}, \label{dec:dif:norm}
	\end{align}
	\begin{align}
		\label{dec:dif:vi}
		& \ 2 \lambda s \left( k + 1 \right) \left\langle x_{k+1} - x_{*} , \left( \Id - T \right) \left( x_{k} \right) \right\rangle - 2 \lambda sk \left\langle x_{k} - x_{*} , \left( \Id - T \right) \left( x_{k-1} \right) \right\rangle \\
		= & \ 2 \lambda sk \Bigl( \left\langle x_{k+1} - x_{*} , \left( \Id - T \right) \left( x_{k} \right) \right\rangle - \left\langle x_{k} - x_{*} , \left( \Id - T \right) \left( x_{k-1} \right) \right\rangle \Bigr) \nonumber \\
		& + 2 \lambda s \left\langle x_{k+1} - x_{*} , \left( \Id - T \right) \left( x_{k} \right) \right\rangle \nonumber \\
		= & \ 2 \lambda sk \left\langle x_{k+1} - x_{*} , \left( \Id - T \right) \left( x_{k} \right) - \left( \Id - T \right) \left( x_{k-1} \right) \right\rangle \nonumber \\
		& + 2 \lambda sk \left\langle x_{k+1} - x_{k} , \left( \Id - T \right) \left( x_{k-1} \right) \right\rangle + 2 \lambda s \left\langle x_{k+1} - x_{*} , \left( \Id - T \right) \left( x_{k} \right) \right\rangle  \nonumber \\
		= & \ 2 \lambda sk \left\langle x_{k+1} - x_{*} , \left( \Id - T \right) \left( x_{k} \right) - \left( \Id - T \right) \left( x_{k-1} \right) \right\rangle \nonumber \\
		& - 2 \lambda sk \left\langle x_{k+1} - x_{k} , \left( \Id - T \right) \left( x_{k} \right) - \left( \Id - T \right) \left( x_{k-1} \right) \right\rangle \nonumber \\
		& + 2 \lambda sk \left\langle x_{k+1} - x_{k} , \left( \Id - T \right) \left( x_{k} \right) \right\rangle + 2 \lambda s \left\langle x_{k+1} - x_{*} , \left( \Id - T \right) \left( x_{k} \right) \right\rangle, \nonumber
	\end{align}
	and
	\begin{align}
		\label{dec:dif:eq}
		& \ \dfrac{1}{4 \left( \alpha - 1 \right)} s^{2} \left( k + 1 \right) \bigl( \left( 3 \alpha - 2 \right) \left( k + 1 \right) + 2 \left( \alpha - 1 \right) \alpha \bigr) \left\lVert \left( \Id - T \right) \left( x_{k} \right) \right\rVert ^{2} \\
		& \ - \dfrac{1}{4 \left( \alpha - 1 \right)} s^{2} k \bigl( \left( 3 \alpha - 2 \right) k + 2 \left( \alpha - 1 \right) \alpha \bigr) \left\lVert \left( \Id - T \right) \left( x_{k-1} \right) \right\rVert ^{2} \nonumber \\
		=  & \ \dfrac{1}{4 \left( \alpha - 1 \right)} s^{2} \bigl( 2 \left( 3 \alpha - 2 \right) k + 2 \alpha ^{2} + \alpha - 2 \bigr) \left\lVert \left( \Id - T \right) \left( x_{k} \right) \right\rVert ^{2} \nonumber \\
		& \ + \dfrac{1}{4 \left( \alpha - 1 \right)} s^{2} k \left( \left( 3 \alpha - 2 \right) k + 2 \left( \alpha - 1 \right) \alpha \right) \left( \left\lVert \left( \Id - T \right) \left( x_{k} \right) \right\rVert ^{2} - \left\lVert \left( \Id - T \right) \left( x_{k-1} \right) \right\rVert ^{2} \right) \nonumber \\
		= & \ \dfrac{1}{4 \left( \alpha - 1 \right)} s^{2} \bigl( 2 \left( 3 \alpha - 2 \right) k + 2 \alpha ^{2} + \alpha - 2 \bigr) \left\lVert \left( \Id - T \right) \left( x_{k} \right) \right\rVert ^{2} \nonumber \\
		& \ + \dfrac{1}{2 \left( \alpha - 1 \right)} s^{2} k \bigl( \left( 3 \alpha - 2 \right) k + 2 \left( \alpha - 1 \right) \alpha \bigr) \left\langle \left( \Id - T \right) \left( x_{k} \right) , \right. \nonumber \\
		& \hspace{7cm} \left. \left( \Id - T \right) \left( x_{k} \right) - \left( \Id - T \right) \left( x_{k-1} \right) \right\rangle \nonumber \\
		& \ - \dfrac{1}{4 \left( \alpha - 1 \right)} s^{2} k \bigl( \left( 3 \alpha - 2 \right) k + 2 \left( \alpha - 1 \right) \alpha \bigr) \left\lVert \left( \Id - T \right) \left( x_{k} \right) - \left( \Id - T \right) \left( x_{k-1} \right) \right\rVert ^{2} . \nonumber
	\end{align}
	
	Therefore, by multiplying  \eqref{dec:dif:vi} and \eqref{dec:dif:eq} by $\frac{1}{2 \left( \alpha - 1 \right)} \left( \alpha - 2 \right)$,  relation \eqref{dec:energy+} gives for every $k \geq 1$
	\begin{align*}
		& \left( \E_{\lambda,k+1} + \dfrac{1}{4 \left( \alpha - 1 \right)} \left( \alpha - 2 \right) \alpha s^{2} \left( k+1 \right) \left\lVert \left( \Id - T \right) \left( x_{k} \right) \right\rVert ^{2} \right) \nonumber \\
		& \quad \ \ - \left( \E_{\lambda,k} + \dfrac{1}{4 \left( \alpha - 1 \right)} \left( \alpha - 2 \right) \alpha s^{2} k \left\lVert \left( \Id - T \right) \left( x_{k-1} \right) \right\rVert ^{2} \right) \nonumber \\
		& = \ \dfrac{1}{2} \left( \left\lVert u_{\lambda,k+1} \right\rVert ^{2} - \left\lVert u_{\lambda,k} \right\rVert ^{2} \right) + 2\lambda(\alpha - 1 - \lambda) \left(\left\lVert x_{k+1} - x_{*} \right\rVert ^{2} - \left\lVert x_{k} - x_{*} \right\rVert ^{2}  \right) \nonumber \\
		& + \dfrac{1}{\alpha - 1} \left( \alpha - 2 \right) \lambda s \left\langle x_{k+1} - x_{*} , \left( \Id - T \right) \left( x_{k} \right) \right\rangle \nonumber \\
		& + \dfrac{1}{\alpha - 1} \left( \alpha - 2 \right) \lambda sk \left\langle x_{k+1} - x_{*} , \left( \Id - T \right) \left( x_{k} \right) - \left( \Id - T \right) \left( x_{k-1} \right) \right\rangle \nonumber \\
		& - \dfrac{1}{\alpha - 1} \left( \alpha - 2 \right) \lambda sk \left\langle x_{k+1} - x_{k} , \left( \Id - T \right) \left( x_{k} \right) - \left( \Id - T \right) \left( x_{k-1} \right) \right\rangle \nonumber \\
		& + \dfrac{1}{\alpha - 1} \left( \alpha - 2 \right) \lambda sk \left\langle x_{k+1} - x_{k} , \left( \Id - T \right) \left( x_{k} \right) \right\rangle \nonumber \\
		& + \dfrac{1}{8 \left( \alpha - 1 \right) ^{2}} \left( \alpha - 2 \right) s^{2} \bigl( 2 \left( 3 \alpha - 2 \right) k + 2 \alpha ^{2} + \alpha - 2 \bigr) \left\lVert \left( \Id - T \right) \left( x_{k} \right) \right\rVert ^{2} \nonumber \\
		& + \dfrac{1}{4 \left( \alpha - 1 \right) ^{2}} \left( \alpha - 2 \right) s^{2} k \bigl( \left( 3 \alpha - 2 \right) k + 2 \left( \alpha - 1 \right) \alpha \bigr) \left\langle \left( \Id - T \right) \left( x_{k} \right) , \right. \nonumber \\
		& \hspace{7cm} \left. \left( \Id - T \right) \left( x_{k} \right) - \left( \Id - T \right) \left( x_{k-1} \right) \right\rangle \nonumber \\
		& - \dfrac{1}{8 \left( \alpha - 1 \right) ^{2}} \left( \alpha - 2 \right) s^{2} k \bigl( \left( 3 \alpha - 2 \right) k + 2 \left( \alpha - 1 \right) \alpha \bigr) \left\lVert \left( \Id - T \right) \left( x_{k} \right) - \left( \Id - T \right) \left( x_{k-1} \right) \right\rVert ^{2}
	\end{align*}
	By multiplying \eqref{dec:dif:norm} by $2\lambda(\alpha-1-\lambda)$ and by taking into consideration \eqref{dec:dif:u-lambda} and that
	\begin{align*}
		\alpha - 2 \left( \alpha - 1 \right) ^{2} + \alpha - 2 & = 2 \left( \alpha - 1 \right) \left( 2 - \alpha \right) \\ - \left( \alpha - 2 \right) ^{2} - \left( \alpha - 2 \right) \left( 3 \alpha - 2 \right) & = - 4 \left( \alpha - 2 \right) \left( \alpha - 1 \right),
	\end{align*}
	we immediately obtain from here identity \eqref{dec:pre}.
	
	Next we will focus on the term $\left\langle x_{k+1} - x_{k} , \left( \Id - T \right) \left( x_{k} \right) - \left( \Id - T \right) \left( x_{k-1} \right) \right\rangle$ for which we will provide an upper bound by exploiting the cocoercivity of $\Id - T$.
	Precisely, the relations \eqref{pre:coco} and \eqref{dis:d-u} guarantee that for every $k \geq 1$
	\begin{align*}
		& -2sk \left( k + \alpha \right) \left\langle x_{k+1} - x_{k} , \left( \Id - T \right) \left( x_{k} \right) - \left( \Id - T \right) \left( x_{k-1} \right) \right\rangle \nonumber \\
		= & -2sk^{2} \left\langle x_{k} - x_{k-1} , \left( \Id - T \right) \left( x_{k} \right) - \left( \Id - T \right) \left( x_{k-1} \right) \right\rangle \nonumber \\
		& + 2s^{2} k^{2} \left\lVert \left( \Id - T \right) \left( x_{k} \right) - \left( \Id - T \right) \left( x_{k-1} \right) \right\rVert ^{2} \nonumber \\
		& + \alpha s^{2} k \left\langle \left( \Id - T \right) \left( x_{k} \right) , \left( \Id - T \right) \left( x_{k} \right) - \left( \Id - T \right) \left( x_{k-1} \right) \right\rangle \nonumber \\
		\leq & \left( 2s - \dfrac{1}{\theta} \right) sk^{2} \left\lVert \left( \Id - T \right) \left( x_{k} \right) - \left( \Id - T \right) \left( x_{k-1} \right) \right\rVert ^{2}
		+ \dfrac{1}{2} \alpha s^{2} k \left\lVert \left( \Id - T \right) \left( x_{k} \right) \right\rVert ^{2} \nonumber \\
		& + \dfrac{1}{2} \alpha s^{2} k \left\lVert \left( \Id - T \right) \left( x_{k} \right) - \left( \Id - T \right) \left( x_{k-1} \right) \right\rVert ^{2}
		- \dfrac{1}{2} \alpha s^{2} k \left\lVert \left( \Id - T \right) \left( x_{k-1} \right) \right\rVert ^{2} \nonumber \\
		\leq & \left( \left( 2s - \dfrac{1}{\theta} \right) sk^{2} + \dfrac{1}{2} \alpha s^{2} k \right) \left\lVert \left( \Id - T \right) \left( x_{k} \right) - \left( \Id - T \right) \left( x_{k-1} \right) \right\rVert ^{2} \nonumber \\
		& + \dfrac{1}{2} \alpha s^{2} \left( k+1 \right) \left\lVert \left( \Id - T \right) \left( x_{k} \right) \right\rVert ^{2} - \dfrac{1}{2} \alpha s^{2} k \left\lVert \left( \Id - T \right) \left( x_{k-1} \right) \right\rVert ^{2} .
	\end{align*}
	After multiplying this inequality by $\frac{1}{2 \left( \alpha - 1 \right)} \left( \alpha - 2 \right) > 0$, adding it to\eqref{dec:pre}, and using that
	\begin{align*}
		& 2 \left( 2 - \alpha \right) \lambda s \left\langle x_{k+1} - x_{*} , \left( \Id - T \right) \left( x_{k} \right) \right\rangle \nonumber \\
		= \ 	& 2 \left( 2 - \alpha \right) \lambda s \left\langle x_{k+1} - x_{k} , \left( \Id - T \right) \left( x_{k} \right) \right\rangle + 2 \left( 2 - \alpha \right) \lambda s \left\langle x_{k} - x_{*} , \left( \Id - T \right) \left( x_{k} \right) \right\rangle ,
	\end{align*}
	we deduce the desired inequality \eqref{dec:inq}. To obtain the coefficients of $\left\lVert x_{k+1} - x_{k} \right\rVert ^{2}$ and $\left\lVert \left( \Id - T \right) \left( x_{k} \right) \right\rVert ^{2}$ as given in \eqref{dec:const}, one also has to take into consideration that $\lambda + 1 - \alpha \leq 0$ and $2 \alpha^{2} + \alpha - 2 > 0$, as $\alpha >2$. The assumptions we made on $\alpha$ and $\lambda$ immediately imply that $\omega_1$, $\omega_2$ and $\omega_4$ are nonpositive numbers.
	
	\labelcref{lem:dec:bnd}
	Since
	\begin{align*}
		& \dfrac{1}{\alpha - 1} \left( \alpha - 2 \right) \lambda sk \left\langle x_{k} - x_{*} , \left( \Id - T \right) \left( x_{k} \right) \right\rangle \nonumber \\
		& + \dfrac{1}{8 \left( \alpha - 1 \right) ^{2}} \left( \alpha - 2 \right) \left( 3 \alpha - 2 \right) s^{2} k^{2} \left\lVert \left( \Id - T \right) \left( x_{k} \right) \right\rVert ^{2} \nonumber \\
		= \ 	& \dfrac{1}{3 \alpha - 2} \left( \alpha - 2 \right) \left( \dfrac{1}{\alpha - 1} \left( 3 \alpha - 2 \right) \lambda sk \left\langle x_{k} - x_{*} , \left( \Id - T \right) \left( x_{k} \right) \right\rangle \right. \nonumber \\
		& \hspace{5cm} \left. + \dfrac{1}{8 \left( \alpha - 1 \right) ^{2}} \left( 3 \alpha - 2 \right) ^{2} s^{2} k^{2} \left\lVert \left( \Id - T \right) \left( x_{k} \right) \right\rVert ^{2} \right) \nonumber \\
		= \ 	& \dfrac{1}{3 \alpha - 2} \left( \alpha - 2 \right) \bigg( \dfrac{1}{2} \left\lVert 2 \lambda \left( x_{k} - x_{*} \right) + \dfrac{1}{2 \left( \alpha - 1 \right)} \left( 3 \alpha - 2 \right) sk \left( \Id - T \right) \left( x_{k-1} \right) \right\rVert ^{2} \nonumber \\
		& \hspace{9cm} - 2 \lambda^{2} \left\lVert x_{k} - x_{*} \right\rVert ^{2} \bigg),
	\end{align*}
	we deduce that for every $k \geq 1$
	\begin{align*}
		\E_{\lambda,k}
		& = \dfrac{1}{2} \left\lVert 2 \lambda \left( x_{k} - x_{*} \right) + 2k \left( x_{k} - x_{k-1} \right) + \dfrac{1}{2 \left( \alpha - 1 \right)} \left( 3 \alpha - 2 \right) sk\left( \Id - T \right) \left( x_{k-1} \right) \right\rVert ^{2} \nonumber \\
		& \quad + 2 \lambda \left( \alpha - 1 - \lambda \right) \left\lVert x_{k} - x_{*} \right\rVert ^{2} + \dfrac{1}{\alpha - 1} \left( \alpha - 2 \right) \lambda sk \left\langle x_{k} - x_{*} , \left( \Id - T \right) \left( x_{k-1} \right) \right\rangle \nonumber \\
		& \quad + \dfrac{1}{8 \left( \alpha - 1 \right) ^{2}} \left( \alpha - 2 \right) \left( 3 \alpha - 2 \right) s^{2} k^{2} \left\lVert \left( \Id - T \right) \left( x_{k-1} \right) \right\rVert ^{2} \nonumber \\
		& = \dfrac{1}{2} \left\lVert 2 \lambda \left( x_{k} - x_{*} \right) + 2k \left( x_{k} - x_{k-1} \right) + \dfrac{1}{2 \left( \alpha - 1 \right)} \left( 3 \alpha - 2 \right) sk\left( \Id - T \right) \left( x_{k-1} \right) \right\rVert ^{2} \nonumber \\
		& \quad + \dfrac{1}{2 \left( 3 \alpha - 2 \right)} \left( \alpha - 2 \right) \left\lVert 2 \lambda \left( x_{k} - x_{*} \right) + \dfrac{1}{2 \left( \alpha - 1 \right)} \left( 3 \alpha - 2 \right) sk \left( \Id - T \right) \left( x_{k-1} \right) \right\rVert ^{2} \nonumber \\
		& \quad + 2 \lambda \left( \alpha - 1 \right) \left( 1 - \dfrac{4 \lambda}{3 \alpha - 2} \right) \left\lVert x_{k} - x_{*} \right\rVert ^{2} .
	\end{align*}
	Using the identity 
	\begin{equation*}
		\left\lVert x \right\rVert ^{2} + \left\lVert y \right\rVert ^{2} = \frac{1}{2} \left( \left\lVert x + y \right\rVert ^{2} + \left\lVert x - y \right\rVert ^{2} \right) \quad \forall x, y \in \sH ,
	\end{equation*}
	we obtain for every $k \geq 1$
	\begin{align}
		\label{dec:eq}
		& \E_{\lambda,k} \\
		= \ & \dfrac{\alpha}{3 \alpha - 2} \left\lVert 2 \lambda \left( x_{k} - x_{*} \right) + 2k \left( x_{k} - x_{k-1} \right) + \dfrac{1}{2 \left( \alpha - 1 \right)} \left( 3 \alpha - 2 \right) sk \left( \Id - T \right) \left( x_{k-1} \right) \right\rVert ^{2} \nonumber \\
		& \quad + \dfrac{1}{4 \left( 3 \alpha - 2 \right)} \left( \alpha - 2 \right) \bigg\Vert 4 \lambda \left( x_{k} - x_{*} \right) + 2k \left( x_{k} - x_{k-1} \right)  \nonumber \\
		& \hspace{5cm}  + \dfrac{1}{\alpha - 1} \left( 3 \alpha - 2 \right) sk \left( \Id - T \right) \left( x_{k-1} \right) \bigg\Vert ^{2} \nonumber \\
		& \quad + \dfrac{1}{3 \alpha - 2} \left( \alpha - 2 \right) k^{2} \left\lVert x_{k} - x_{k-1} \right\rVert ^{2}
		+ 2 \lambda \left( \alpha - 1 \right) \left( 1 - \dfrac{4 \lambda}{3 \alpha - 2} \right) \left\lVert x_{k} - x_{*} \right\rVert ^{2} , \nonumber 
	\end{align}
	This shows that for $0 \leq \lambda \leq \frac{3 \alpha}{4} - \frac{1}{2}$ all terms in the expression \eqref{dec:eq} are nonnegative, thus the sequence $(\E_{\lambda,k})_{k \geq 1}$  is nonnegative, too. 
\end{proof}

\begin{proof}[Proof of \cref{lem:trunc}]
	For the quadratic expression in $R_{k}$ we calculate
	\begin{equation*}
		\dfrac{\Delta_{k}}{s^{2}}
		:= \left( \omega_{2} k + \omega_{3} \right) ^{2} - \dfrac{2(5\alpha-2)}{3\alpha -2} \omega_{1} \omega_{4} k^{2} \nonumber \\
		= \left( \omega_{2}^{2} - \dfrac{2(5\alpha-2)}{3\alpha -2}  \omega_{1} \omega_{4} \right) k^{2} + 2 \omega_{2} \omega_{3} k + \omega_{3}^{2} .
	\end{equation*}
	It suffices to guarantee that $\omega_{2}^{2} - \frac{2(5\alpha-2)}{3\alpha -2}  \omega_{1} \omega_{4} < 0$ in order to be sure that there exits some integer $k \left( \lambda \right) \geq 1$ such that $\Delta_{k} \leq 0$ for every $k \geq k \left( \lambda \right)$ and to obtain from here, due to \cref{lem:quad}, that $R_{k} \leq 0$ for every $k \geq k \left( \lambda \right)$.
	
	We will show that there exists a nonempty open interval contained in $[0, \alpha-1]$ with the property that $\omega_{2}^{2} - \dfrac{2(5\alpha-2)}{3\alpha -2}  \omega_{1} \omega_{4} < 0$ holds when $\lambda$ is chosen within this open interval.  To this end we set $\xi := \lambda + 1 - \alpha \leq 0$ and get
	\begin{equation*}
		\omega_{2} = \dfrac{1}{\alpha - 1} \Bigl( 4 \left( \alpha - 1 \right) \xi - \alpha \left( \alpha - 2 \right) \Bigr) 
		\quad \textrm{ and } \quad
		\omega_{1} \omega_{4} = - \dfrac{2}{\alpha - 1} \left( \alpha - 2 \right) \left( 3 \alpha - 2 \right) \xi .
	\end{equation*}
	Written in terms of $\xi$, we have first to guarantee that
	\begin{align}
		\label{trunc:omega-a}
		& \omega_{2}^{2} - \dfrac{2(5\alpha-2)}{3\alpha -2}  \omega_{1} \omega_{4} \nonumber \\
		= \ 	& \dfrac{1}{\left( \alpha - 1 \right) ^{2}} \left( \left( 4 \left( \alpha - 1 \right) \xi - \alpha \left(\alpha -2 \right) \right) ^{2} + 4 \left( 5 \alpha - 2 \right) \left( \alpha - 1 \right) \left( \alpha - 2 \right) \xi \right) \nonumber \\
		= \ 	& \dfrac{1}{\left( \alpha - 1 \right) ^{2}} \left( 16 \left( \alpha - 1 \right) ^{2} \xi^{2} + 4 \left( \alpha - 1 \right) \left( \alpha - 2 \right) \left( 3 \alpha - 2 \right) \xi + \alpha ^{2} \left( \alpha - 2 \right) ^{2} \right) < 0 . \nonumber
	\end{align}
	A direct computation shows that
	\begin{equation*}
		\Delta_{\xi}
		= 16 \left( \alpha - 1 \right) ^{2} \left( 2 - \alpha \right) ^{2} \left( \left( 3 \alpha - 2 \right) ^{2} - 4 \alpha ^{2} \right)
		= 16 \left( \alpha - 1 \right) ^{2} \left( \alpha - 2 \right) ^{3} \left( 5 \alpha - 2 \right) > 0.
	\end{equation*}
	Hence, in order to get \eqref{trunc:omega-a}, we have to choose $\xi$ between the two roots of the quadratic function arising in this formula, in other words
	\begin{align*}
		\xi_{1} \left( \alpha \right) & := \dfrac{1}{32 \left( \alpha - 1 \right) ^{2}} \left( - 4 \left( \alpha - 1 \right) \left( \alpha - 2 \right) \left( 3 \alpha - 2 \right) - \sqrt{\Delta_{\xi}} \right) \nonumber \\
		& = - \dfrac{1}{8 \left( \alpha - 1 \right)} \left( \alpha - 2 \right) \left( 3 \alpha - 2  + \sqrt{\left( \alpha - 2 \right) \left( 5 \alpha - 2 \right)} \right) \nonumber \\
		& < \xi = \lambda + 1 - \alpha
		< \xi_{2} \left( \alpha \right) := \dfrac{1}{32 \left( \alpha - 1 \right) ^{2}} \left( - 4 \left( \alpha - 1 \right) \left( \alpha - 2 \right) \left( 3 \alpha - 2 \right) + \sqrt{\Delta_{\xi}} \right) \nonumber \\
		& = - \dfrac{1}{8 \left( \alpha - 1 \right)} \left( \alpha - 2 \right) \left(3 \alpha - 2 - \sqrt{\left( \alpha - 2 \right) \left( 5 \alpha - 2 \right)} \right) .
	\end{align*}
	Obviously $\xi_{1} \left( \alpha \right) < 0$ and from Viète’s formula $\xi_{1} \left( \alpha \right) \cdot \xi_{2} \left( \alpha \right) = \frac{\alpha^{2} \left(\alpha -2\right) ^{2}}{16 \left( \alpha - 1 \right) ^{2}}$, it follows that we must have $\xi_{2} \left( \alpha \right) < 0$ as well. 
	
	Therefore, going back to $\lambda$, in order to be sure that $\omega_{2}^{2} - \frac{2(5\alpha-2)}{3\alpha -2}  \omega_{1} \omega_{4} < 0$ this must be chosen such that
	\begin{equation*}
		\alpha - 1 + \xi_{1} \left( \alpha \right) < \lambda < \alpha - 1 + \xi_{2} \left( \alpha \right) .
	\end{equation*}
	Next we will show that
	\begin{equation}
		\label{trunc:check}
		0 < \alpha - 1 - \dfrac{1}{8 \left( \alpha - 1 \right)} \left( \alpha - 2 \right) \left( 3 \alpha - 2 \right) < \dfrac{3 \alpha}{4} - \dfrac{1}{2}.
	\end{equation}
	Indeed, the inequality on the left-hand side follows immediately, since
	\begin{align*}
		\alpha - 1 - \dfrac{1}{8 \left( \alpha - 1 \right)} \left( \alpha - 2 \right) \left( 3 \alpha - 2 \right)
		& = \dfrac{1}{8 \left( \alpha - 1 \right)} \left( 5 \alpha^{2} - 8 \alpha + 4 \right) \nonumber \\ 
		& = \dfrac{1}{8 \left( \alpha - 1 \right)} \left( \alpha^{2} + 4 \left( \alpha - 1 \right) ^{2} \right) > 0 .
	\end{align*}
	Using this relation, one can notice that the inequality on the right-hand side of \eqref{trunc:check} can be equivalently written as
	\begin{equation*}
		5 \alpha^{2} - 8 \alpha + 4 < 2 \left( \alpha - 1 \right) \left( 3 \alpha - 2 \right) \Leftrightarrow 0 < \alpha^{2} - 2 \alpha = \alpha \left( \alpha - 2 \right),
	\end{equation*}
	which is true as $\alpha >2$.
	
	From \eqref{trunc:check} we immediately deduce that
	\begin{equation*}
		0 < \alpha - 1 + \xi_{2} \left( \alpha \right)
		\quad \textrm{ and } \quad
		\alpha - 1 + \xi_{1} \left( \alpha \right) < \dfrac{3 \alpha}{4} - \dfrac{1}{2}.
	\end{equation*}
	This allows us to choose
	\begin{align*}
		\underline{\lambda} \left( \alpha \right) & := \alpha - 1 + \xi_{1} \left( \alpha \right) \nonumber \\ 
		& = \dfrac{\alpha^{2}}{8 \left( \alpha - 1 \right)} + \dfrac{\alpha - 1}{2} - \dfrac{1}{8 \left( \alpha - 1 \right)} \left( \alpha - 2 \right) \sqrt{\left( \alpha - 2 \right) \left( 5 \alpha - 2 \right)}  \nonumber \\
		< \overline{\lambda} \left( \alpha \right) & := \min \left\lbrace \dfrac{3 \alpha}{4} - \dfrac{1}{2},   \alpha - 1 + \xi_{2} \left( \alpha \right) \right\rbrace  \\
		& = \min \left\lbrace \dfrac{3 \alpha}{4} - \dfrac{1}{2} , \dfrac{\alpha^{2}}{8 \left( \alpha - 1 \right)} + \dfrac{\alpha - 1}{2} + \dfrac{1}{8 \left( \alpha - 1 \right)} \left( \alpha - 2 \right) \sqrt{\left( \alpha - 2 \right) \left( 5 \alpha - 2 \right)} \right\rbrace ,
	\end{align*}
	since 
	\begin{equation*}
		\dfrac{1}{8 \left( \alpha - 1 \right)} \alpha^{2} + \dfrac{1}{2} \left( \alpha - 1 \right) - \dfrac{1}{8 \left( \alpha - 1 \right)} \left( \alpha - 2 \right) \sqrt{\left( \alpha - 2 \right) \left( 5 \alpha - 2 \right)} > 0 .
	\end{equation*}
	Indeed, as $\left( \alpha - 1 \right) \sqrt{\alpha - 1} > \left( \alpha - 2 \right) \sqrt{\alpha - 2}$ and $4 \sqrt{\alpha - 1} > \sqrt{5 \alpha - 2}$ we can easily deduce that
	\begin{equation*}
		\alpha^{2} + 4 \left( \alpha - 1 \right) ^{2} > 4 \left( \alpha - 1 \right) ^{2} > \left( \alpha - 2 \right) \sqrt{\left( \alpha - 2 \right) \left( 5 \alpha - 2 \right)}
	\end{equation*}
	and the claim follows.
	
	In conclusion, choosing $\lambda$ to satisfy $\underline{\lambda} \left( \alpha \right) < \lambda < \overline{\lambda} \left( \alpha \right)$, we have $$\omega_{2}^{2} - \frac{2(5\alpha-2)}{3\alpha -2}  \omega_{1} \omega_{4} < 0$$ and therefore there exists some integer $k \left( \lambda \right) \geq 1$ such that $R_{k} \leq 0$ for every $k \geq k \left( \lambda \right)$.
\end{proof}

\section*{Acknowledgments}
The authors express their gratitude to the handling editor and three anonymous reviewers for their valuable comments and remarks, which significantly improved the quality of the manuscript.

\bibliographystyle{siamplain}
\bibliography{references}
\end{document}